\numberwithin{equation}{section}
\theoremstyle{plain}
\newtheorem{theorem}[equation]{Theorem}
\newtheorem{proposition}[equation]{Proposition}
\newtheorem{lemma}[equation]{Lemma}
\newtheorem{corollary}[equation]{Corollary}
\theoremstyle{definition}
\newtheorem{definition}[equation]{Definition}
\theoremstyle{remark}
\newtheorem{remark}[equation]{Remark}
\def\mbZ{\mathbb Z}
\def\mcZ{\mathcal{Z}}
\def\mcF{\mathcal{F}}
\def\nslash{\:\notslash\:}
\def\mcH{\mathcal H}
\def\mcC{\mathcal C}
\def\mcR{\mathcal R}
\def\mfD{\mathfrak{D}}
\def\mcW{\mathcal W}
\def\parti{\partial}
\def\bi{\text{\boldmath$i$}}
\def\bj{\text{\boldmath$j$}}
\begin{document}
	
	\title[The Brundan-Kleshchev subalgebra and BK-type isomorphism]{The Brundan-Kleshchev subalgebra and BK-type isomorphism}
	\author [\tiny{Fan Kong and Zhi-Wei Li}] {Fan Kong and Zhi-Wei Li*}
	
	\begin{abstract}  We use a unified method to give an isomorphism between direct sums of cyclotomic affine (and degenerate affine) Hecke algebras and cyclotomic BK-subalgebras which are some KLR-type algebras.
	\end{abstract}
	
	\date{\today}
	\thanks{*Corresponding author}
	\thanks{{\em 2000 Mathematics Subject Classification:} 20C08.}
	\thanks{The first author is supported by the NSF of China (No. 11501057) and the Doctoral Fund of Southwest University (SWU-118003); The second author is supported by the NSF	of China (No. 11671174).}
	
	\address{Fan Kong\\ School of Mathematics and Statistics \\
		Southwest University \\ Chongqing 400715\\ P. R. China.}
	\email{kongfan85@126.com}

	\address{Zhi-Wei Li\\ School of Mathematics and Statistics \\
		Jiangsu Normal University\\	Xuzhou 221116 Jiangsu \\ P. R. China.}
	\email{zhiweili@jsnu.edu.cn}
	\maketitle
	
	\setcounter{tocdepth}{1}
	
	\section{Introduction}
	
 Brundan and Kleshchev \cite{Brundan-Kleshchev09} gave an explicit isomorphism between blocks of cyclotimic affine Hecke algebras of symmetric groups and cyclotomic KLR algebras of type A. But their proof is complicated since the invalidity of some rational functions.

 On the other hand, Lusztig has showed that there is a natural isomorphism between affine Hecke algebras of all types and their graded versions \cite{Lusztig89}. One key point is the rationalization of affine Hecke algebras. Motivated by Lusztig's work, we introduce the Lusztig extension of affine Hecke algebras of all types and the Brundan-Kleshchev subalgebra (or BK subalgebra for short) of the Lusztig extension. Surprisingly, we can construct the KLR-type generators of Lusztig extensions by following Brundan and Kleshchev, and then fast give an isomorphism between direct sums of blocks of cyclotomic affine (and degenerate affine) Hecke algebras and cyclotomic BK subalgebras.

 In order to give an overview of our main results in this article, let $\mfD$ be the directed Dynkin diagram of a fixed Weyl group $\mcW$ with vertex set $[n]:=\{1,2,\cdots,n\}$. We use $(a_{rs})_{r,s\in [n]}$ to denote the Cartan matrix of $\mfD$:	
\begin{align*}a_{rs}:=\begin{cases}2 & \text{if $r=s$};\\
0& \text{if $r\nslash s$};\\
-1 & \text{if $r-\-s$, or $\xymatrix@C10pt{r\ar@{=}[r]|{\langle}\ar@{=}[r]&s}$, or  $\xymatrix@C10pt{r\ar@3{-}[r]|{\langle}\ar@3{-}[r]&s}$};\\
-2 & \text{if$ \xymatrix@C10pt{r\ar@{=}[r]|{\rangle}\ar@{=}[r]&s}$};\\
-3 & \text{if $\xymatrix@C10pt{r\ar@3{-}[r]|{\rangle}\ar@3{-}[r]&s}$}.
\end{cases}\end{align*}
Here the symbol $r \nslash s$ indicates that $r$ is not connected with $s$ by edges; $r -\- s$ indicates that $r$ is connected with $s$ by a single edge; $\xymatrix@C10pt{r\ar@{=}[r]|{\rangle}\ar@{=}[r]&s}$ indicates that $r$ is connected with $s$ by a double edge and there is an arrow from $r$ to $s$; and $\xymatrix@C10pt{r\ar@3{-}[r]|{\rangle}\ar@3{-}[r]&s}$ indicates that $r$ is connected with $s$ by a triple edge and there is an arrow from $r$ to $s$.

\vskip5pt
Let $I^n$ be the set of $n$-tuples of the abelian group $I=\mathbb{Z}/e\mathbb{Z}$ ($e\geq 0$ and $e\neq 1$). The Weyl group $\mcW$ acts on the left on $I^n$ via (\ref{group action}). We fix a $\mcW$-orbit $\mcC$ of $I^n$ and a system $\{\epsilon(\bi) \ | \ \bi\in \mcC\}$ of mutually orthogonal idempotents. Fix a ground field $\Bbbk$. We consider the {\itshape BK subalgebra} $\tilde{\mathcal{L}}$ generated by
$$\{y_1,\cdots,y_n,\psi_1,\cdots,\psi_n\}\cup \{\epsilon(\bi) \ | \ \bi\in \mcC\}\cup \{f^{-1} \ | \ f\in \Bbbk[y_1,\cdots,y_n] \ \mbox{with} \ f(0)\neq0\}$$
and some KLR-type basis relations (1)-(9) of Theorem \ref{thm:deKLR basis}.
\vskip5pt

Let $\mcH$ be the degenerate affine Hecke algebra generated by $x_1,\cdots,x_n,t_1,\cdots,t_n$ and relations (\ref{deaffhecke x})-(\ref{detttttt}). From now on, we fix $\Lambda=(\Lambda_i)_{i\in I}\in \mathbb{N}^I$ (we follow the convention that $\mathbb{N}=\{0,1,2,\cdots\}$) with $\sum_{i\in I}\Lambda_i<\infty$. Let $\tilde{\mathcal{L}}(\Lambda)=\tilde{\mathcal{L}}/\langle y_1^{\Lambda_{i_1}}\epsilon(\bi) | \bi\in \mcC \rangle \ \ \mbox{and}$ and $\mcH(\Lambda)=\mcH/\langle \prod_{i\in I}(x_1-i)^{\Lambda_i} \rangle$ be the corresponding cyclotomic quotients.
\vskip5pt

The following is our first main result in this article for the degenerate affine Hecke algebra:

\vskip5pt
\noindent{\bf Theorem \ref{thm:derho}} {\itshape There is an isomorphism of algebras $\tilde{\mathcal{L}}(\Lambda)\cong \mcH(\Lambda)e(\mcC).$}
\vskip5pt
Next, we give a similar construction for the non-degenerate affine Hecke algebra.  Fix $q\in\Bbbk$, $q\neq 0,1$. We consider the {\itshape BK subalgebra} $\tilde{\mathcal{L}}_q$ generated by
$$\{Y_1,\cdots,Y_n,\Psi_1,\cdots,\Psi_n\}\cup \{\epsilon(\bi) \ | \ \bi\in \mcC\}\cup \{f^{-1} \ | \ f\in \Bbbk[Y_1,\cdots,Y_n] \ \mbox{with} \ f(0)\neq0\}$$
and some KLR-type basis relations (1)-(10) of Theorem \ref{thm:KLR basis}.
\vskip5pt
Let $\mcH_q$ be the non-degenerate affine Hecke algebra generated by $X_1,\cdots,X_n, $ $T_1,\cdots,T_n$ and relations (\ref{affhecke X})-(\ref{TTTTTT}). Let $\tilde{\mathcal{L}}_q(\Lambda)=\tilde{\mathcal{L}}/\langle Y_1^{\Lambda_{i_1}}\epsilon(\bi) | \bi\in \mcC \rangle \ \ \mbox{and}$ and $\mcH_q(\Lambda)=\mcH/\langle \prod_{i\in I}(X_1-i)^{\Lambda_i} \rangle$ be the corresponding cyclotomic quotients.
\vskip5pt
The following is our second main result in this paper for the non-degenerate affine Hecke algebra:

\vskip5pt
\noindent{\bf Theorem \ref{thm:rho}} {\itshape There is an isomorphism of algebras $\tilde{\mathcal{L}}_q(\Lambda)\cong \mcH_q(\Lambda)e(\mcC).$}
\vskip5pt

 We now sketch the contents of the paper. In section 2, we give the group action of $\mcW$ on the $n$-tuples $I^n$ which will be used as an index set in the whole article. In Section 3, we first review the degenerate affine Hecke algebras and recall their Bernstein-Zelevinski basis. We then introduce the Lusztig extension of degenerate affine Hecke algebras and give their KLR-type generators. Our first main result Theorem \ref{thm:derho} in this paper are then proved after introducing the BK subalgebras. In Section 4, we consider the non-degenerate case in a similar way. The last section is aimed to give the general and unified definition for the KLR type algebras in the two cases above.

	\section{Preliminaries}
	\subsection{The divided difference operator of polynomial rings}  We view the Weyl group $\mcW$ as being generated as a Coxeter group
	with generators $\{\sigma_r, r\in[n]\}$. Let $\Bbbk[x]$ be the polynomial ring with indeterminates $\{x_r | r\in [n]\}$ over the field $\Bbbk$. There is an action of $\mcW$ from the left on $\Bbbk[x]$ (by the ring automorphism) by defining
	\begin{equation}\label{desr(xs)}
	\sigma_r(x_s)=x_s-a_{sr}x_r.
	\end{equation}
	for $r, s\in [n]$
	
Using the $\mcW$-action above, we define the {\it divided difference operators} $\parti_r$ on $\Bbbk[x]$ for all $r\in[n]$ as $$\parti_r(f)=\frac{\sigma_r(f)-f}{x_r}.$$   By straightforward calculations, the divided difference operators satisfy the Leibniz rule
\begin{equation*}\label{deLeibniz}\parti_r(fg)=\parti_r(f)g+\sigma_r(f)\parti_r(g),
\end{equation*}
	for $f,g\in \Bbbk[x]$ and $r\in [n]$, and the relations
	$$\sigma_r(\parti_r(f))=\parti_r(f), \quad \parti_r(\sigma_r(f))=-\parti_r(f)$$

 Let $\Bbbk(x)$ be the corresponding rational function field, then the $\mcW$-action on $\Bbbk[x]$ above can be extended to an action $w\colon \tfrac{f}{g}\mapsto \tfrac{w(f)}{w(g)}$ of $\mcW$ on $\Bbbk(x)$ (by the field automorphism). This means that the action of the divided difference operators on $\Bbbk[x]$ also extends to operators on $\Bbbk(x)$.
	
	Let $F$ be the quotient field of the subalgebra $$Z=\{f\in \Bbbk[x] | w(f)=f \ \mbox{for every}\ w\in \mcW\}$$ of $\mcW$-invariants. By \cite[3.12 (a)]{Lusztig89}, there is a natural $\Bbbk$-algebra isomorphism
	\begin{equation} \label{deK(x)} \Bbbk[x]\otimes_{Z} F\to \Bbbk(x), f\otimes g\mapsto fg. \end{equation}

	\subsection{The Demazure operator of Laurent polynomials rings}  Let $\Bbbk[X^{\pm 1}]$ be the Laurent polynomial ring in the indeterminates $\{X_r |  r\in [n]\}$. There is an action of $\mcW$ from the left on $\Bbbk[X^{\pm 1}]$ (by the ring automorphism) by defining \begin{equation}\label{nsr(yt)}
	\sigma_r(X_s)=X_sX_r^{-a_{sr}}.
	\end{equation}
	for all $r,s\in [n]$.
	Using the $\mcW$-action above, we define \emph{Demazure operators} $\mathrm{D}_r$ on $\Bbbk[X^{\pm1}]$ for all $r\in[n]$ as
	\begin{equation}\mathrm{D}_{r}(f):=\frac{\sigma_r(f)-f}{1-X_r}.
	\end{equation}
It is well-known that the Demazure operators on $\Bbbk[X^{\pm1}]$ also satisfy the Leibniz rule
\begin{equation*}\label{Leibniz}\mathrm{D}_r(fg)=\mathrm{D}_r(f)g+\sigma_r(f)\mathrm{D}_r(g).
\end{equation*}
for $f,g\in \Bbbk[X^{\pm 1}]$ and $r\in[n]$.
	
		 Let $\Bbbk(X)$ be the corresponding rational function field, then the action of $\mcW$ on $\Bbbk[X^{\pm 1}]$ extends to an action $w\colon \frac{f}{g}\mapsto \frac{w(f)}{w(g)}$ of $\mcW$ on $\Bbbk(X)$ (by the field automorphism). This means that the action of the Demazure operators on $\Bbbk[X^{\pm1}]$ also extends to operators on $\Bbbk(X)$.

	Let $\mcF$ be the quotient field of the subalgebra $$\mathcal{Z}=\{f\in \Bbbk[X^{\pm 1}] | w(f)=f \ \mbox{for every}\ w\in \mcW\}$$ of $\mcW$-invariants. Similar to (\ref{deK(x)}), there is a natural $\Bbbk$-algebra isomorphism
	\begin{equation} \label{K(X)} \Bbbk[X^{\pm 1}]\otimes_{\mcZ} \mcF\to \Bbbk(X), f\otimes g\mapsto fg. \end{equation}

	\subsection{The numbers game} Let $I$ be the abelian group $\mathbb{Z}/e\mathbb{Z}$ ($e\geq 0$ and $e\neq 1$). For $r\in [n]$ and $\bi=(i_s)_{s\in [n]}\in I^n$, we define the map
\begin{equation}\label{group action}\sigma_r(\bi)_s=i_s-a_{sr}i_r.
\end{equation}
Then these maps induce an action of $\mcW$ on $I^{n}$. In fact, this is a special case of the numbers game which applies a combinatorial model of the Coxeter groups \cite[Theorem 4.3.1(ii)]{Bjorner-Brenti}.
\begin{lemma} For $r,s\in [n]$, there holds that
		
	$(1)$ $\sigma_r^2=1$.
	
	$(2)$ $\sigma_r\sigma_s=\sigma_s\sigma_r$, if $r\nslash s$.
	
	$(3)$ $\sigma_r\sigma_s\sigma_r=\sigma_s\sigma_r\sigma_s$, if $r-\-s$.
	
	$(4)$ $(\sigma_r\sigma_s)^2=(\sigma_s\sigma_r)^2$, if $\xymatrix@C10pt{r\ar@{=}[r]|{\rangle}\ar@{=}[r]&s}$.
	
	$(5)$ $(\sigma_r\sigma_s)^3=(\sigma_s\sigma_r)^3$, if $\xymatrix@C10pt{r\ar@3{-}[r]|{\rangle}\ar@3{-}[r]&s}$.
\end{lemma}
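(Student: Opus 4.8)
The plan is to recognize relations $(1)$–$(5)$ as the Coxeter relations of $\mcW$ and to verify them by reducing everything to an identity of integer matrices. First I would observe that for each $r$ the map $\sigma_r\colon I^n\to I^n$ of (\ref{group action}) is $\mathbb{Z}$-linear: writing $A=(a_{sr})_{s,r\in[n]}$ for the Cartan matrix and $\mathbf{a}_r=Ae_r$ for its $r$-th column, one has $\sigma_r(\bi)=\bi-i_r\,\mathbf{a}_r$, so $\sigma_r$ is the reduction modulo $e$ of the integer matrix $M_r\in M_n(\mathbb{Z})$ given by $M_r v=v-v_r\mathbf{a}_r$. Since reduction $M_n(\mathbb{Z})\to M_n(\mathbb{Z}/e\mathbb{Z})$ is a ring homomorphism, it suffices to prove $(1)$–$(5)$ for the $M_r$ over $\mathbb{Z}$ (equivalently over $\mathbb{Q}$), after which the case $e=0$ is literal and all other cases follow by reduction. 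Because the $M_r$ are involutions once $(1)$ is known, each of the braid relations $(2)$–$(5)$ is equivalent to a single statement $(M_rM_s)^{m_{rs}}=I$, where $m_{rs}=2,3,4,6$ according as $a_{rs}a_{sr}=0,1,2,3$; this is the usual reformulation coming from $\sigma_s\sigma_r=(\sigma_r\sigma_s)^{-1}$.

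Relation $(1)$ is a one-line check: $(M_r v)_r=v_r-a_{rr}v_r=-v_r$, hence $M_r^2v=M_rv-(M_rv)_r\mathbf{a}_r=v-v_r\mathbf{a}_r+v_r\mathbf{a}_r=v$. For the remaining relations I would fix $r\neq s$ and exploit a splitting of $\mathbb{Q}^n$ adapted to the pair. Let $U=\{v:v_r=v_s=0\}$ and $V=\mathbb{Q}\mathbf{a}_r+\mathbb{Q}\mathbf{a}_s$. On the one hand $M_rM_s$ fixes $U$ pointwise: if $v_r=v_s=0$ then $M_sv=v$ and then $M_rv=v$. On the other hand both $M_r$ and $M_s$ preserve $V$, since $M_r\mathbf{a}_r=-\mathbf{a}_r$ and $M_r\mathbf{a}_s=\mathbf{a}_s-a_{rs}\mathbf{a}_r$ (using $(\mathbf{a}_s)_r=a_{rs}$), and symmetrically for $M_s$. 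As $\mcW$ is a Weyl group its Cartan matrix $A$ is positive definite, hence invertible, so $\mathbf{a}_r,\mathbf{a}_s$ are linearly independent, $\dim V=2$, and a short check that $U\cap V=0$ (the matrix $\left(\begin{smallmatrix}2&a_{rs}\\a_{sr}&2\end{smallmatrix}\right)$ is nonsingular, its determinant being $4-a_{rs}a_{sr}\in\{1,2,3,4\}$) gives $\mathbb{Q}^n=U\oplus V$.

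It then remains to compute the order of $M_rM_s$ on the two-dimensional space $V$. In the basis $(\mathbf{a}_r,\mathbf{a}_s)$ one finds $M_rM_s|_V=\left(\begin{smallmatrix}a_{rs}a_{sr}-1&a_{rs}\\-a_{sr}&-1\end{smallmatrix}\right)$, with determinant $1$ and trace $a_{rs}a_{sr}-2$; its characteristic polynomial is $\lambda^2-(a_{rs}a_{sr}-2)\lambda+1$, whose roots are the primitive $m_{rs}$-th roots of unity exactly when $a_{rs}a_{sr}=0,1,2,3$. Hence $(M_rM_s)^{m_{rs}}$ acts as the identity on $V$, and since it already fixes $U$ pointwise, $(M_rM_s)^{m_{rs}}=I$ on $\mathbb{Q}^n$; being an identity of integer matrices it reduces modulo $e$ to give $(2)$–$(5)$. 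The only genuinely computational point is the last eigenvalue bookkeeping, i.e. matching $a_{rs}a_{sr}\in\{0,1,2,3\}$ with the orders $2,3,4,6$; this is the classical rank-two dihedral calculation and is the step I would expect to require the most care. Alternatively, the whole lemma follows from the theory of the numbers game in \cite[Theorem 4.3.1(ii)]{Bjorner-Brenti}, but the matrix argument above is self-contained.
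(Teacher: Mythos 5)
Your argument is correct, but it takes a genuinely different route from the paper's. The paper checks (1)--(2) directly from the definition and proves (3)--(5) by explicitly tracing the orbit of the pair $(i_r,i_s)$ under alternating applications of $\sigma_r$ and $\sigma_s$ in three commutative diagrams, one per edge type --- essentially the numbers-game computation, which it also attributes to \cite[Theorem 4.3.1(ii)]{Bjorner-Brenti}. You instead linearize: $\sigma_r$ is the mod-$e$ reduction of the integer matrix $M_r\colon v\mapsto v-v_r\mathbf{a}_r$, each braid relation is rewritten as $(M_rM_s)^{m_{rs}}=I$, and the order of $M_rM_s$ is read off from its characteristic polynomial on the two-dimensional invariant subspace $V=\mathbb{Q}\mathbf{a}_r+\mathbb{Q}\mathbf{a}_s$, the complement $U=\{v\mid v_r=v_s=0\}$ being fixed pointwise. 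This buys uniformity: a single computation governed by $a_{rs}a_{sr}\in\{0,1,2,3\}$ covers all four cases at once, and the splitting $\mathbb{Q}^n=U\oplus V$ transparently accounts for the coordinates $t\notin\{r,s\}$, which the paper's diagrams (displaying only the $r$- and $s$-entries) leave implicit. What it loses is the explicit orbit data $i_{s1},i_{s2},\dots$ that the paper's diagrams record and that the notation introduced immediately after the lemma reuses. Two small touch-ups: when $a_{rs}a_{sr}=0$ the characteristic polynomial has the repeated root $-1$, so ``the eigenvalues are primitive $m_{rs}$-th roots of unity'' does not by itself yield $(M_rM_s)^2=I$; you should note that in this case $M_rM_s|_V$ is literally $-\mathrm{id}$ (or, as the paper does, dispose of (2) directly, since $M_r$ and $M_s$ then interact with disjoint coordinates). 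Also, positive definiteness of the full Cartan matrix is not needed --- your $2\times 2$ determinant $4-a_{rs}a_{sr}\neq 0$ already gives both the independence of $\mathbf{a}_r,\mathbf{a}_s$ and $U\cap V=0$.
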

\begin{proof} The assertions (1)-(2) can be proved directly from the construction of $\sigma_r$. The assertions (3)-(5) follow from the following commutative diagrams:
\[
\begin{tikzpicture}\draw[-](-6.5,2)--(-6.2,2);
\node[scale=0.6] at(-6.7,2){$i_r$};
\node[scale=0.6] at(-6,2){$i_s$};
\draw[->](-6.8,1.9)--(-7.1,1.5);
\draw[->](-5.9,1.9)--(-5.6,1.5);
\draw[-](-6.95,1.4)--(-7.25,1.4);
\node[scale=0.6] at(-7.6,1.4){$i_r+i_s$};
\node[scale=0.6] at(-6.75,1.4){$-i_s$};
\draw[-](-5.75,1.4)--(-5.45,1.4);
\node[scale=0.6] at(-5.95,1.4){$-i_r$};
\node[scale=0.6] at(-5.05,1.4){$i_r+i_s$};
\draw[->](-7.1,1.3)--(-7.1,.8);
\draw[->](-5.6,1.3)--(-5.6,.8);
\node[scale=0.6] at(-7.7,.7){$-i_r-i_s$};
\node[scale=0.6] at(-6.75,.7){$i_r$};
\node[scale=0.6] at(-5.95,.7){$i_s$};
\node[scale=0.6] at(-4.95,.7){$-i_r-i_s$};
\draw[-](-7.25,.7)--(-6.95,.7);
\draw[-](-5.75,.7)--(-5.45,.7);
\draw[-](-6.5,.2)--(-6.2,.2);
\node[scale=0.6] at(-6.8,.2){$-i_s$};
\node[scale=0.6] at(-5.9,.2){$-i_r$};
\draw[->](-7.,.6)--(-6.55,.3);
\draw[->](-5.7,.6)--(-6.15,.3);
\node[scale=0.5] at (-7.15,1.8){$\sigma_s$};
\node[scale=0.5] at (-5.6,1.8){$\sigma_r$};
\node[scale=0.5] at (-7.3,1){$\sigma_r$};
\node[scale=0.5] at (-5.4,1){$\sigma_s$};
\node[scale=0.5] at (-7,.4){$\sigma_s$};
\node[scale=0.5] at (-5.7,.4){$\sigma_r$};

\draw[-](-2,2.025)--(-1.7,2.025);
\draw[-](-2,1.98)--(-1.7,1.98);
\node[scale=0.4] at (-1.85,2){$\rangle$};
\node[scale=0.6] at(-2.2,2){$i_r$};
\node[scale=0.6] at(-1.5,2){$i_s$};

\draw[->](-2.3,1.9)--(-3.1,1.6);
\draw[->](-1.4,1.9)--(-.6,1.6);

\draw[-](-3.25,1.425)--(-2.95,1.425);
\draw[-](-3.25,1.38)--(-2.95,1.38);
\node[scale=0.4] at (-3.1,1.4){$\rangle$};
\node[scale=0.6] at(-3.7,1.4){$i_r+2i_s$};
\node[scale=0.6] at(-2.75,1.4){$-i_s$};

\draw[-](-.75,1.425)--(-.45,1.425);
\draw[-](-.75,1.38)--(-.45,1.38);
\node[scale=0.4] at (-.6,1.4){$\rangle$};
\node[scale=0.6] at(-.95,1.4){$-i_r$};
\node[scale=0.6] at(-.05,1.4){$i_r+i_s$};

\draw[->](-3.1,1.3)--(-3.1,.8);
\draw[->](-0.6,1.3)--(-0.6,.8);

\draw[-](-3.25,.725)--(-2.95,.725);
\draw[-](-3.25,.68)--(-2.95,.68);
\node[scale=0.4] at (-3.1,.7){$\rangle$};
\node[scale=0.6] at(-3.8,.7){$-i_r-2i_s$};
\node[scale=0.6] at(-2.5,.7){$i_r+i_s$};

\draw[-](-.75,.725)--(-.45,.725);
\draw[-](-.75,.68)--(-.45,.68);
\node[scale=0.4] at (-0.6,.7){$\rangle$};
\node[scale=0.6] at(-1.2,.7){$i_r+2i_s$};
\node[scale=0.6] at(0,.7){$-i_r-i_s$};

\draw[->](-3.1,.6)--(-3.1,.1);
\draw[->](-.6,.6)--(-.6,.1);

\draw[-](-3.25,.025)--(-2.95,.025);
\draw[-](-3.25,-.02)--(-2.95,-.02);
\node[scale=0.4] at (-3.1,0){$\rangle$};
\node[scale=0.6] at(-3.4,0){$i_r$};
\node[scale=0.6] at(-2.45,0){$-i_r-i_s$};

\draw[-](-.75,.025)--(-.45,.025);
\draw[-](-.75,-.02)--(-.45,-.02);
\node[scale=0.4] at (-.6,0){$\rangle$};
\node[scale=0.6] at(-1.25,0){$-i_r-2i_s$};
\node[scale=0.6] at(-.25,0){$i_s$};

\draw[-](-2,-.675)--(-1.7,-.675);
\draw[-](-2,-.72)--(-1.7,-.72);
\node[scale=0.4] at (-1.85,-.7){$\rangle$};
\node[scale=0.6] at(-2.2,-.7){$-i_r$};
\node[scale=0.6] at(-1.5,-.7){$-i_s$};

\draw[->](-3,-.15)--(-2.05,-.5);
\draw[->](-.7,-.15)--(-1.65,-.5);

\node[scale=0.5] at (-2.9,1.9){$\sigma_s$};
\node[scale=0.5] at (-.8,1.9){$\sigma_r$};
\node[scale=0.5] at (-3.3,1.05){$\sigma_r$};
\node[scale=0.5] at (-.4,1.05){$\sigma_s$};
\node[scale=0.5] at (-3.3,.35){$\sigma_s$};
\node[scale=0.5] at (-.4,.35){$\sigma_r$};
\node[scale=0.5] at (-2.8,-.45){$\sigma_r$};
\node[scale=0.5] at (-.8,-.45){$\sigma_s$};

\draw[-](3.2,2.035)--(3.5,2.035);
\draw[-](3.2,2)--(3.5,2);
\draw[-](3.2,1.965)--(3.5,1.965);
\node[scale=0.4] at (3.35,2){$\rangle$};
\node[scale=0.6] at(3,2){$i_r$};
\node[scale=0.6] at(3.7,2){$i_s$};

\draw[->](2.9,1.9)--(2,1.6);
\draw[->](3.8,1.9)--(4.6,1.6);

\draw[-](1.75,1.435)--(2.05,1.435);
\draw[-](1.75,1.4)--(2.05,1.4);
\draw[-](1.75,1.365)--(2.05,1.365);
\node[scale=0.4] at (1.9,1.4){$\rangle$};
\node[scale=0.6] at(1.3,1.4){$i_r+3i_s$};
\node[scale=0.6] at(2.3,1.4){$-i_s$};

\draw[-](4.45,1.435)--(4.75,1.435);
\draw[-](4.45,1.4)--(4.75,1.4);
\draw[-](4.45,1.365)--(4.75,1.365);
\node[scale=0.4] at (4.6,1.4){$\rangle$};
\node[scale=0.6] at(4.2,1.4){$-i_r$};
\node[scale=0.6] at(5.2,1.4){$i_r+i_s$};

\draw[->](1.9,1.3)--(1.9,.8);
\draw[->](4.6,1.3)--(4.6,.8);

\draw[-](1.75,.035)--(2.05,.035);
\draw[-](1.75,0)--(2.05,0);
\draw[-](1.75,-.035)--(2.05,-.035);
\node[scale=0.4] at (1.9,0){$\rangle$};
\node[scale=0.6] at(1.2,0){$2i_r+3i_s$};
\node[scale=0.6] at(2.6,0){$-i_r-2i_s$};

\draw[-](4.45,.035)--(4.75,.035);
\draw[-](4.45,0)--(4.75,0);
\draw[-](4.45,-.035)--(4.75,-.035);
\node[scale=0.4] at (4.6,0){$\rangle$};
\node[scale=0.6] at(3.85,0){$-2i_r-3i_s$};
\node[scale=0.6] at(5.2,0){$i_r+2i_s$};

\draw[->](1.9,.6)--(1.9,.1);
\draw[->](4.6,.6)--(4.6,.1);

\draw[-](1.75,.735)--(2.05,.735);
\draw[-](1.75,.7)--(2.05,.7);
\draw[-](1.75,.665)--(2.05,.665);
\node[scale=0.4] at (1.9,.7){$\rangle$};
\node[scale=0.6] at(1.2,.7){$-i_r-3i_s$};
\node[scale=0.6] at(2.5,.7){$i_r+2i_s$};

\draw[-](4.45,.735)--(4.75,.735);
\draw[-](4.45,.7)--(4.75,.7);
\draw[-](4.45,.665)--(4.75,.665);
\node[scale=0.4] at (4.6,.7){$\rangle$};
\node[scale=0.6] at(3.95,.7){$2i_r+3i_s$};
\node[scale=0.6] at(5.2,.7){$-i_r-i_s$};
\draw[->](1.9,-.1)--(1.9,-.6);
\draw[->](4.6,-.1)--(4.6,-.6);

\draw[-](1.75,-.675)--(2.05,-.675);
\draw[-](1.75,-.72)--(2.05,-.72);
\node[scale=0.4] at (1.9,-.7){$\rangle$};
\node[scale=0.6] at(1.2,-.7){$-2i_r-3i_s$};
\node[scale=0.6] at(2.55,-.7){$i_r+i_s$};

\draw[-](4.45,-.675)--(4.75,-.675);
\draw[-](4.45,-.72)--(4.75,-.72);
\node[scale=0.4] at (4.6,-.7){$\rangle$};
\node[scale=0.6] at(3.95,-.7){$i_r+3i_s$};
\node[scale=0.6] at(5.3,-.7){$-i_r-2i_s$};

\draw[-](1.75,-1.375)--(2.05,-1.375);
\draw[-](1.75,-1.42)--(2.05,-1.42);
\node[scale=0.4] at (1.9,-1.4){$\rangle$};
\node[scale=0.6] at(1.5,-1.4){$i_r$};
\node[scale=0.6] at(2.65,-1.4){$-i_r-i_s$};

\draw[-](4.45,-1.375)--(4.75,-1.375);
\draw[-](4.45,-1.42)--(4.75,-1.42);
\node[scale=0.4] at (4.6,-1.4){$\rangle$};
\node[scale=0.6] at(3.85,-1.4){$-i_r-3i_s$};
\node[scale=0.6] at(5,-1.4){$i_s$};
\draw[->](1.9,-.8)--(1.9,-1.3);
\draw[->](4.6,-.8)--(4.6,-1.3);

\draw[-](3.2,-2.135)--(3.5,-2.135);
\draw[-](3.2,-2.1)--(3.5,-2.1);
\draw[-](3.2,-2.065)--(3.5,-2.065);
\node[scale=0.4] at (3.35,-2.1){$\rangle$};
\node[scale=0.6] at(3,-2.1){$-i_r$};
\node[scale=0.6] at(3.7,-2.1){$-i_s$};

\draw[->](2,-1.55)--(3.1,-1.9);
\draw[->](4.5,-1.55)--(3.55,-1.9);

\node[scale=0.5] at (2.1,1.9){$\sigma_s$};
\node[scale=0.5] at (4.4,1.9){$\sigma_r$};
\node[scale=0.5] at (1.7,1.05){$\sigma_r$};
\node[scale=0.5] at (4.8,1.05){$\sigma_s$};
\node[scale=0.5] at (1.7,.35){$\sigma_s$};
\node[scale=0.5] at (4.8,.35){$\sigma_r$};
\node[scale=0.5] at (1.7,-.35){$\sigma_r$};
\node[scale=0.5] at (4.8,-.35){$\sigma_s$};
\node[scale=0.5] at (4.8,-1.05){$\sigma_r$};
\node[scale=0.5] at (1.7,-1.05){$\sigma_s$};
\end{tikzpicture}
\]
\end{proof}

In the sequel, for simplicity, we will write $i_{s1}:=\sigma_r(\bi)_s, i_{s2}:=\sigma_r\sigma_s(\bi)_s, i_{s3}:=\sigma_r\sigma_s\sigma_r(\bi)_s, i_{s4}:=\sigma_r\sigma_s\sigma_r\sigma_s(\bi)_s, i_{s5}:=\sigma_r\sigma_s\sigma_r\sigma_s\sigma_r(\bi)_s$.

\section{The BK subalgebra in degenerate case}

\subsection{Degenerate affine Hecke algebras and their BZ basis}
In \cite{Drinfeld}, Drinfeld introduced a machinery to define degenerate affine Hecke algebras and gave the concise form of type A. We notice that Drinfeld's degenerate affine Hecke algebra is also a quotient of the graded Hecke algebra induced by Lusztig in \cite[Proposition 4,4]{Lusztig89}. Following Drinfeld, the {\itshape degenerate affine Hecke algebra}  $\mcH$ of $\mcW$ is defined to be the associated unital $\Bbbk$-algebra with generators $\{x_r, t_r  |  r\in [n]\}$ subject to the following relations for all admissible indices:
\begin{align}
\label{deaffhecke x}
x_rx_s&=x_sx_r;
\\
\label{detrxs}t_r x_s &= \sigma_r(x_s)t_r+\parti_r(x_s);\\
\label{det^2}
\quad t_r^2 &= 1;
\\
\label{dett}
t_r  t_s &= t_s  t_r \hspace{43.5mm} \text{if $r \nslash s$};
\\
\label{dettt}
t_r t_st_r &= t_st_rt_s \hspace{39.5mm} \text{if $r -\- s$};
\\
\label{detttt}
(t_r t_s)^2 &= (t_st_r)^2 \hspace{39.1mm} \text{if $\xymatrix@C10pt{r\ar@{=}[r]|{\rangle}\ar@{=}[r]&s}$};\\
\label{detttttt}
(t_r t_s)^3 &= (t_st_r)^3 \hspace{39.5mm} \text{if $\xymatrix@C10pt{r\ar@3{-}[r]|{\rangle}\ar@3{-}[r]&s}$}.
\end{align}

For $w=\sigma_{r_1} \sigma_{r_2}\cdots \sigma_{r_m}\in \mcW$ a reduced expression we put $T_w:=T_{r_1}T_{r_2}\cdots T_{r_m}$. Then $T_w$ is a well-defined element in $\mcH$ and the algebra $\mcH$ is a free $\Bbbk[x]$-module with basis $\{T_w\ | \ w\in S_n\}$ (the Bernstein-Zelevinski basis) \cite[Lemma 3.4]{Lusztig89}.

By \cite[Proposition 3.11]{Lusztig89}, the center of $\mcH$ is $Z$. Then $\mcH$ can be seen as a $Z$-subalgebra (identified with the subspace $\mcH\otimes 1$) of the $Z$-algebra $$\mcH_F:=\mcH\otimes_Z F.$$ For any $s\in [n]$ and $f\in \Bbbk(x)$, as a consequence of \cite[3.12 (d)]{Lusztig89}, we get
\begin{equation} \label{deft} t_sf=\sigma_s(f)t_s+\parti_s(f).\end{equation}
Using the BZ basis of $\mcH$, there are decompositions \cite[3.12(c)]{Lusztig89}:
\begin{equation}\label{deTwdecom}\mcH_{F}=\oplus_{w\in \mcW}t_w\Bbbk(x)=\oplus_{w\in \mcW}\Bbbk(x)t_w. \end{equation}
So the rationalization algebra $\mcH_F$ is a free $\Bbbk(x)$-module of finite rank with basis $\{t_w \ | \ w\in \mcW\}$.

\subsection{Intertwining elements}
 For $r\in [n]$, we define the {\itshape intertwining elements} $\kappa_r$ in $\mcH_F$ as follows:
$$\phi_r:=t_r+x_r^{-1}.$$
The following result is important for us.
\begin{proposition} \label{prop:deintert basis}The algebra $\mcH_F$ is generated by $\{x_1,\cdots,x_n,\phi_1,\cdots,\phi_n,f^{-1}\ | \ 0\neq f\in K[x]\}$ subject to the following relations for all admissible $r,s$:
	\begin{align} \label{dexrxs}x_rx_s&=x_sx_r;\\
	\label{deff^{-1}} ff^{-1}&=f^{-1}f=1 \hspace{15mm} \text{$\forall 0\neq f\in \Bbbk[x]$};\\
\label{dephig}	\phi_rx_s&=\sigma_r(x_s)\phi_r;\\
	\label{dephi^2}\phi_r^2&=1-x_r^{-2};\\
	\label{dephi2}\phi_r\phi_s&=\phi_s\phi_r \hspace{20mm} \text{if $r\nslash s$};\\
	\label{dephi3}\phi_r\phi_s\phi_r&=\phi_s\phi_r\phi_s \hspace{16.4mm} \text{if $r-\-s$};\\
	\label{dephi4}(\phi_r\phi_s)^2&=(\phi_s\phi_r)^2\hspace{15mm} \text{if $\xymatrix@C10pt{r\ar@{=}[r]|{\rangle}\ar@{=}[r]&s}$};\\
	\label{dephi6}(\phi_r\phi_s)^3&=(\phi_s\phi_r)^3 \hspace{15mm} \text{if $\xymatrix@C10pt{r\ar@3{-}[r]|{\rangle}\ar@3{-}[r]&s}$}.
	\end{align}	
 \end{proposition}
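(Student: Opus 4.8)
The plan is to prove the statement in three movements: that the listed elements generate $\mcH_F$, that they satisfy the relations, and that these relations are defining. Since the proposition admits inverses $f^{-1}$ of \emph{all} nonzero $f\in\Bbbk[x]$, the subalgebra they generate together with $x_1,\dots,x_n$ is the localization of $\Bbbk[x]$ at all nonzero elements, i.e.\ the field $\Bbbk(x)\subseteq\mcH_F$ of (\ref{deK(x)}); in particular every $x_r^{-1}$ is at our disposal, so $t_r=\phi_r-x_r^{-1}$ is expressible and hence, by the basis decomposition (\ref{deTwdecom}), the listed elements generate $\mcH_F$. Relation (\ref{dexrxs}) is inherited from $\mcH$ and the invertibility relations are automatic. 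The one computation I would isolate is the \emph{global} intertwining identity $\phi_r f=\sigma_r(f)\phi_r$ for all $f\in\Bbbk(x)$: from (\ref{deft}) one has $\phi_r f=\sigma_r(f)t_r+\parti_r(f)+x_r^{-1}f$, and since $\parti_r(f)=x_r^{-1}(\sigma_r(f)-f)$ this collapses to $\sigma_r(f)(t_r+x_r^{-1})=\sigma_r(f)\phi_r$. Specializing to $f=x_s$ gives (\ref{dephig}); and using $\sigma_r(x_r)=-x_r$ one gets $t_rx_r^{-1}=-x_r^{-1}t_r-2x_r^{-2}$ from (\ref{deft}), whence $\phi_r^2=t_r^2+t_rx_r^{-1}+x_r^{-1}t_r+x_r^{-2}=1-x_r^{-2}$.

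The heart of the matter, and the step I expect to be the main obstacle, is the family of braid relations (\ref{dephi2})--(\ref{dephi6}). I would handle all four cases uniformly rather than by the kind of case-by-case diagram chase used for the numbers game. Fix $r,s$, let $m$ be the order of $\sigma_r\sigma_s$, and let $w_0$ be the longest element of the parabolic $\langle\sigma_r,\sigma_s\rangle$; the two sides are the products $\Pi=\phi_r\phi_s\phi_r\cdots$ and $\Pi'=\phi_s\phi_r\phi_s\cdots$ of $m$ factors, read off the two reduced words for $w_0$. Expanding each factor $\phi_{r_i}=t_{r_i}+x_{r_i}^{-1}$ and using that these words are reduced, the unique top-length contribution to each product is $t_{w_0}$ with coefficient $1$, while every other contribution omits at least one $t_{r_i}$ and so lies in $\bigoplus_{\ell(w)<m}\Bbbk(x)\,t_w$. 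Hence $\delta:=\Pi-\Pi'$ lies in $\bigoplus_{\ell(w)<m}\Bbbk(x)\,t_w$.

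Now comes the clean point that avoids any explicit expansion. Because each factor satisfies $\phi_{r_i}f=\sigma_{r_i}(f)\phi_{r_i}$ and the product of the $\sigma_{r_i}$ along either reduced word is $\sigma_{w_0}$, both $\Pi$ and $\Pi'$ intertwine by $\sigma_{w_0}$, so $\delta f=\sigma_{w_0}(f)\delta$ for every $f\in\Bbbk(x)$. Suppose $\delta\neq0$ and choose $w^{\ast}$ of maximal length among the indices with nonzero coefficient $c_{w^{\ast}}$. Iterating (\ref{deft}) yields the length-triangularity $t_wf=\sigma_w(f)t_w+(\text{terms of length}<\ell(w))$, so the coefficient of $t_{w^{\ast}}$ in $\delta f$ is $c_{w^{\ast}}\sigma_{w^{\ast}}(f)$, whereas in $\sigma_{w_0}(f)\delta$ it is $\sigma_{w_0}(f)c_{w^{\ast}}$. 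Comparing forces $\sigma_{w^{\ast}}(f)=\sigma_{w_0}(f)$ for all $f$, and since $\mcW$ acts faithfully on $\Bbbk(x)$ (the extension $\Bbbk(x)/F$ being Galois with group $\mcW$) this gives $w^{\ast}=w_0$, contradicting $\ell(w^{\ast})<m=\ell(w_0)$. Thus $\delta=0$, which is precisely the asserted braid relation; the four instances correspond to $m=2,3,4,6$ and require no separate argument.

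It remains to confirm that the relations are \emph{defining}. In any algebra presented by these generators and relations, (\ref{dephig}) moves all of $\Bbbk(x)$ to the left of the $\phi$'s, the braid relations (\ref{dephi2})--(\ref{dephi6}) make $\phi_w:=\phi_{r_1}\cdots\phi_{r_m}$ well defined for a reduced word (Matsumoto--Tits), and $\phi_r^2=1-x_r^{-2}\in\Bbbk(x)$ deletes non-reduced repetitions; hence every element is a $\Bbbk(x)$-combination of $\{\phi_w\mid w\in\mcW\}$ and the abstract algebra has $\Bbbk(x)$-dimension at most $|\mcW|$. The resulting surjection onto $\mcH_F$ sends $\phi_w$ to $t_w+(\text{lower length})$, which is unitriangular against the basis $\{t_w\}$ of (\ref{deTwdecom}); the images are therefore $\Bbbk(x)$-linearly independent, the surjection is an isomorphism, and the presentation follows. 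The only delicate ingredient is the braid relations, and the argument above trades the combinatorial case analysis for the single structural input of faithfulness together with length-triangularity of the $t_w$.
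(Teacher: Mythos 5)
Your proposal is correct, and for the generation step and the completeness step it runs exactly parallel to the paper: $t_r=\phi_r-x_r^{-1}$ gives generation via (\ref{deTwdecom}), and the defining property is obtained by spanning the abstract algebra with $\{\phi_w\}$ and observing the unitriangular change of basis $\phi_w=t_w+\sum_{u<w}f_ut_u$ against the free basis $\{t_w\}$. Where you genuinely diverge is in the verification of (\ref{dephig})--(\ref{dephi6}): the paper simply cites \cite[Proposition 5.2]{Lusztig89}, whereas you give a self-contained and uniform proof. Your computation of the global intertwining law $\phi_rf=\sigma_r(f)\phi_r$ and of $\phi_r^2=1-x_r^{-2}$ from (\ref{deft}) is correct, and your treatment of all four braid relations at once is a nice trade: you show $\delta=\Pi-\Pi'$ lies in $\oplus_{\ell(w)<\ell(w_0)}\Bbbk(x)t_w$ while intertwining by $w_0$, and then kill it by comparing the top coefficient, using length-triangularity of $t_wf$ and the fact that distinct elements of $\mcW$ act by distinct automorphisms of $\Bbbk(x)$. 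This buys independence from the external reference and avoids any rank-two case analysis; what it costs is the extra hypothesis that $\mcW$ acts faithfully on $\Bbbk(x)$ (equivalently $[\Bbbk(x):F]=|\mcW|$). That hypothesis can fail for the action (\ref{desr(xs)}) in small positive characteristic (e.g.\ $\sigma_r(x_r)=-x_r=x_r$ in characteristic $2$ for type $A_1$), but the paper's own framework --- the citation of \cite[3.12]{Lusztig89} and the freeness of $\mcH_F$ of rank $|\mcW|$ over $\Bbbk(x)$ --- already presupposes the same nondegeneracy, so your argument is no less general than the one it replaces; it would just be worth stating the faithfulness assumption explicitly rather than burying it in the parenthetical about $\Bbbk(x)/F$ being Galois.
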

\begin{proof} Since $t_r=\phi_r-x_r^{-1}$, thus the elements $\{x_1,\cdots,x_n,\phi_1,\cdots,\phi_n,f^{-1}\ | \ 0\neq f\in \Bbbk[x]\}$ are generators of $\mcH_F$ by (\ref{deTwdecom}). The statements (\ref{dexrxs}) and (\ref{deff^{-1}}) follow using the fact that $\Bbbk(x)$ is a commutative subalgebra of $\mcH_F$.  By \cite[Proposition 5.2]{Lusztig89}, the relations (\ref{dephig})-(\ref{dephi6}) hold. Next we prove the generating set is complete, that is, these relations generate all the relations.
	
	Assume $\mathcal{A}$ is the $\Bbbk$-subalgebra generated by $\{x_1,\cdots,x_n,\phi_1,\cdots,\phi_n,f^{-1}\ | \ 0\neq f\in \Bbbk[x]\}$ subject to the relations (\ref{dexrxs})-(\ref{dephi6}). Then there is an obvious surjective $\Bbbk$-algebra homomorphism
	$$\pi\colon \mathcal{A}\to \mcH_F, \ a\mapsto a.$$
This maps is $\Bbbk(x)$-linear. So $\pi$ is in fact a homomorphism of $\Bbbk(x)$-modules. Since $\mcH_F$ is a free $\Bbbk(x)$-module of rank $|\mcW|$ with basis $\{t_w \ | \ w\in \mcW\}$, so it is enough to show that $\mathcal{A}$ has also rank $|\mcW|$ as a $\Bbbk(x)$-module. In fact, we can define $\phi_w:=\phi_{r_1}\phi_{r_2}\cdots \phi_{r_m}$ if $w=\sigma_{r_1} \sigma_{r_2}\cdots \sigma_{r_m}$ is a reduced expression in $\mcW$. It is a well-defined element by the braid relations (\ref{dephi2})-(\ref{dephi6}). We claim that $\{\phi_w\ | \ w\in \mcW\}$ is a $\Bbbk(x)$-basis of $\mathcal{A}$. In fact, by the relation (\ref{dephig}), we can express any element $a\in\mathcal{A}$ in the form
$$a=\sum_{w\in \mcW}f_{w}\phi_w$$
where $f_w\in \Bbbk(x)$. Since $\phi_w=t_w+\sum_{u<w}f_ut_u$
for some rational functions $f_u\in \Bbbk(x)$, and $\{t_w \ | \ w\in W\}$ are $\Bbbk(x)$-linearly independent in $\mcH_F$, therefore $\{\phi_w \ | \ w\in \mcW\}$ are also $\Bbbk(x)$-linearly independent. \end{proof}
\noindent The Proposition above shows that $\mcH_F$ has decompositions
 \begin{equation} \label{de rationalization phi_w}
\mcH_F=\oplus_{w\in \mcW}\Bbbk(x)\phi_w=\oplus_{w\in \mcW}\phi_w\Bbbk(x).
\end{equation}

\subsection{The Lusztig extension of $\mcH$}
Let $\mathcal{E}$ be the unital $\Bbbk$-algebra with basis $\{\epsilon(\bi) | \bi\in \mcC\}$ which satisfy
\begin{equation} \label{deepsilon} \epsilon(\bi)\epsilon(\bj)=\delta^\bi_\bj\epsilon(\bi), \quad \sum_{\bi\in \mcC}\epsilon(\bi)=1.\end{equation}

The {\it Lusztig extension} of $\mcH$ with respect to $\mathcal{E}$ is the $\Bbbk$-algebra $\mathcal{L}$ which is equal as $\Bbbk$-space to the tensor product $$\mathcal{L}:=\mcH_F\otimes_\Bbbk\mathcal{E}=\oplus_{w\in \mcW,\bi\in \mcC}\phi_w\Bbbk(x)\epsilon(\bi)$$
of the rationalization algebra $\mcH_F$ and the semi-simple algebra $\mathcal{E}$. Multiplication is defined so that $\mcH_{F}$ (identified with the subspace $\mcH_{F}\otimes 1$) and $\mathcal{E}$ (identified with the subspace $1\otimes \mathcal{E}$) are subalgebras of $\mathcal{L}$, and in addition
 \begin{align}
\label{dexepis}x_r\epsilon(\bi)&=\epsilon(\bi)x_r,\\
\label{dephiepsi}\phi_r\epsilon(\bi)&=\epsilon(\sigma_r(\bi))\phi_r.
\end{align}
It is easy to see that \begin{equation}\label{tepsi}t_r\epsilon(\bi)=\epsilon(\sigma_r(\bi))t_r+x_r^{-1}\epsilon(\sigma_r(\bi))-x_r^{-1}\epsilon(\bi).\end{equation}

We notice that relations (\ref{deepsilon})-(\ref{tepsi}) have been implicitly indicated in \cite[Subsections 8.7-8.8]{Lusztig89}. Therefore, the reason for using the notion of Lusztig extension is clear.

By Proposition \ref{prop:deintert basis}, the Lusztig extension $\mathcal{L}$ has two generating sets, one is
$$\{x_1,\cdots,x_n,\phi_1,\cdots,\phi_n,f^{-1}, \epsilon(\bi)\ | \ 0\neq f\in \Bbbk[x], \bi\in \mcC\},$$
the other one is
$$\{x_1,\cdots,x_n,t_1,\cdots,t_n,f^{-1}, \epsilon(\bi)\ | \ 0\neq f\in \Bbbk[x], \bi\in \mcC\}.$$
 Moreover, the algebra $\mathcal{L}$ has decompositions
\begin{equation} \label{de Lusztig Tw}
\mathcal{L}=\oplus_{w\in \mcW,\bi\in \mcC}\Bbbk(x)t_w\epsilon(\bi)=\oplus_{w\in \mcW,\bi\in \mcC}t_w\Bbbk(x)\epsilon(\bi)
\end{equation}
and \begin{equation} \label{de Lusztig phi_w}
\mathcal{L}=\oplus_{w\in \mcW,\bi\in \mcC}\Bbbk(x)\phi_w\epsilon(\bi)=\oplus_{w\in \mcW,\bi\in \mcC}\phi_w\Bbbk(x)\epsilon(\bi).
\end{equation}

\subsection{Brundan-Kleshchev auxiliary elements}
 Recall that \cite[(3.12)]{Brundan-Kleshchev09}, for each $r\in [n]$, Brundan and Kleshchev introduced the element
\begin{equation} \label{yx} y_r:=\sum_{\bi\in \mcC}(x_r-i_r)\epsilon(\bi).\end{equation} Then $y_r\epsilon(\bi)=\epsilon(\bi)y_r$ by \ref{dexepis}, and $y_r$ is a unit in $\mathcal{L}$ with $y_r^{-1}=\sum_{\bi\in \mcC}(x_r-i_r)^{-1}\epsilon(\bi).$
Let $\Bbbk[y]$ be the polynomial ring with $y:=y_1,y_2,\cdots,y_n$ and $\Bbbk(y)$ be the rational function field.
\begin{lemma} \label{lem:K(x)=K(y)} $\Bbbk(x)\otimes_\Bbbk\mathcal{E}=\Bbbk(y)\otimes_\Bbbk\mathcal{E}$ in $\mathcal{L}$ as subalgebras.
\end{lemma}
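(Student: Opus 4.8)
\emph{Plan.} The idea is to reduce the equality of these two commutative subalgebras of $\mathcal{L}$ to a one-component statement, where it becomes the elementary fact that translating the generators of a rational function field by scalars does not change the field. Write $A:=\Bbbk(x)\otimes_\Bbbk\mathcal{E}$ and $B:=\Bbbk(y)\otimes_\Bbbk\mathcal{E}$, regarded as subalgebras of $\mathcal{L}$. From the definition \eqref{yx} of $y_r$ and the orthogonality relations \eqref{deepsilon} one reads off the two basic identities
\begin{equation*}
y_r\epsilon(\bi)=(x_r-i_r)\epsilon(\bi),\qquad x_r\epsilon(\bi)=(y_r+i_r)\epsilon(\bi),
\end{equation*}
valid for every $r\in[n]$ and $\bi\in\mcC$; since the $x_r$ and $y_r$ all commute with every $\epsilon(\bi)$ by \eqref{dexepis}, these will be the only relations I need.

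First I would record the two direct-sum decompositions $A=\bigoplus_{\bi\in\mcC}\Bbbk(x)\epsilon(\bi)$ and $B=\bigoplus_{\bi\in\mcC}\Bbbk(y)\epsilon(\bi)$ inside $\mathcal{L}$. For $A$ this is immediate from the $w=1$ part of \eqref{de Lusztig Tw}: it exhibits $\bigoplus_{\bi}\Bbbk(x)\epsilon(\bi)$ as a direct summand, so the $\epsilon(\bi)$ are linearly independent over $\Bbbk(x)$ and left multiplication $f\mapsto f\epsilon(\bi)$ identifies $\Bbbk(x)\epsilon(\bi)$ with a copy of the field $\Bbbk(x)$. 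For $B$ I first have to check that $\Bbbk(y)$ genuinely embeds in $\mathcal{L}$: the $y_r$ are algebraically independent, since a polynomial relation among them would, after multiplication by $\epsilon(\bi)$ and use of the first identity, force the same relation among the $x_r-i_r$ in $\Bbbk(x)$; and every nonzero $g\in\Bbbk[y]$ is a unit in $\mathcal{L}$, because $g\epsilon(\bi)=g(x_1-i_1,\dots,x_n-i_n)\epsilon(\bi)$ is a nonzero element of the field $\Bbbk(x)\epsilon(\bi)$, so $g=\sum_{\bi}g\epsilon(\bi)$ is invertible with inverse $\sum_{\bi}g(x_1-i_1,\dots,x_n-i_n)^{-1}\epsilon(\bi)$. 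The same component computation shows the $\epsilon(\bi)$ are linearly independent over $\Bbbk(y)$, which gives the decomposition of $B$.

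With both decompositions in hand the problem splits over $\bi$, and it suffices to prove $\Bbbk(x)\epsilon(\bi)=\Bbbk(y)\epsilon(\bi)$ for each fixed $\bi$. Under the field isomorphism $f\mapsto f\epsilon(\bi)$ of $\Bbbk(x)$ onto $\Bbbk(x)\epsilon(\bi)$, the first identity sends $y_r\epsilon(\bi)$ to $x_r-i_r$, so $\Bbbk(y)\epsilon(\bi)$ corresponds to the subfield of $\Bbbk(x)$ generated by $\{x_r-i_r\mid r\in[n]\}$; since the $x_r-i_r$ arise from the $x_r$ by translation by scalars, this subfield is all of $\Bbbk(x)$. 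The reverse inclusion is visible directly from the second identity, $x_r\epsilon(\bi)=(y_r+i_r)\epsilon(\bi)\in\Bbbk(y)\epsilon(\bi)$. This yields the component equality, and summing over $\bi$ gives $A=B$.

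The only genuine subtlety — the step I would treat most carefully — is the passage from the polynomial rings to their fields of fractions inside the ambient algebra $\mathcal{L}$. One must make sure that ``$\Bbbk(y)$'' is literally realized through honest units of $\mathcal{L}$ rather than a formal object: the invertibility computation above, together with the fact that each $\Bbbk(x)\epsilon(\bi)$ is already a field inside $\mathcal{L}$ (so that inverses computed there agree with inverses in $\mathcal{L}$), is exactly what licenses the translation-of-variables argument. Everything else follows routinely from the two displayed identities and the idempotent decomposition.
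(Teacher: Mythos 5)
Your proposal is correct and follows essentially the same route as the paper: both arguments rest on the identities $y_r\epsilon(\bi)=(x_r-i_r)\epsilon(\bi)$ and $x_r=\sum_{\bi}(y_r+i_r)\epsilon(\bi)$, establish that a nonzero $g\in\Bbbk[y]$ is a unit in $\mathcal{L}$ with inverse $\sum_{\bi}g(x_1-i_1,\dots,x_n-i_n)^{-1}\epsilon(\bi)$, and then conclude the two subalgebras coincide by the translation of variables. Your version merely spells out the componentwise decomposition over the idempotents and the algebraic independence of the $y_r$, which the paper leaves implicit.
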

\begin{proof} If $g(y)$ is a polynomial in $\Bbbk[y]$, then $$g(y)=\sum_{\bi \in \mcC}g(y)\epsilon(\bi)=\sum_{\bi \in \mcC}g(x_1-i_1,\cdots,x_n-i_n)\epsilon(\bi)$$
	in $\mathcal{L}$. Therefore, if $g(y)\neq 0$, then $g(x_1-i_1,\cdots,x_n-i_n)\neq 0$ in $K[x]$ for all $\bi\in \mcC$. Thus $g(y)^{-1}=\sum_{\bi \in \mcC}g(x_1-i_1,\cdots,x_n-i_n)^{-1}\epsilon(\bi)$ exists in $\mathcal{L}$. So $\Bbbk(y)\otimes_\Bbbk\mathcal{E}$ is a subalgebra of $\mathcal{L}$. Combining (\ref{yx}) with $x_r=\sum_{\bi\in \mcC}(y_r+i_r)\epsilon(\bi)$, we know that $\Bbbk(x)\otimes_\Bbbk\mathcal{E}=\Bbbk(y)\otimes_\Bbbk\mathcal{E}$ in $\mathcal{L}$.
\end{proof}
Following Brundan and Kleshchev, for each $r\in [n]$, we define the element
\begin{equation*}\label{deqr(i)} q_r(\bi)=\begin{cases} 1-y_r & \text{if $i_r=0$},\\
1& \text{if $i_r=1, e\neq 2$},\\
(y_r-2)(1-y_r)^{-2}& \text{if $i_r=-1, e\neq 2$},\\
(1-y_r)^{-1}& \text{if $i_r=1, e=2$},\\
1-(y_r+i_r)^{-1}& \text{if $i_r\neq 0, \pm 1$.}
\end{cases}
\end{equation*}
Notice that the Weyl group $\mcW$ acts on the left on $\Bbbk[y]$ since
$$\sigma_r(y_s)=\sum_{\bi\in \mcC}(\sigma_r(X_s)-\bi_s)\epsilon(\sigma_r(\bi))=y_{s}-a_{sr}y_r,$$
for all $r,s\in [n]$. For simplicity, we shall write $y_{r1}:=\sigma_s(y_r), y_{r2}:=\sigma_r\sigma_s(y_r), y_{r3}=\sigma_s\sigma_r\sigma_s(y_r), y_{r4}=\sigma_r\sigma_s\sigma_r\sigma_s(y_r), y_{r5}=\sigma_s\sigma_r\sigma_s\sigma_r\sigma_s(y_r)$ in the sequel.

Similar to the proof of \cite[(3.27)-(3.29)]{Brundan-Kleshchev09}, we obtain
\begin{lemma} Let $r,s\in [n]$ and $\bi\in \mcC$. Then
	\begin{equation}\label{deqrsrqrsr} q_r(\bi)\sigma_r(q_r(\sigma_r(\bi)))=\begin{cases}1-y_r^2 & \text{if $i_r=0$};\\
	-(y_r+2)(1+y_r)^{-2}&\text{if $i_r=1, e\neq 2$};\\
	(y_r-2)(1-y_r)^{-2}&\text{if $i_r=-1, e\neq2$};\\
	(1-y_r)^{-2}&\text{if $i_r=1, e=2$};\\
	1-(y_r+i_r)^{-2}& \text{if $i_r\neq 0, \pm 1$}.
	\end{cases}
	\end{equation}
\end{lemma}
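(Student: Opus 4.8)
The plan is to verify \eqref{deqrsrqrsr} by a direct case-by-case computation resting on two facts already available. Since $a_{rr}=2$, the formula $\sigma_r(y_s)=y_s-a_{sr}y_r$ recorded above gives $\sigma_r(y_r)=-y_r$; as $\sigma_r$ acts on $\Bbbk(y)$ by a field automorphism, applying $\sigma_r$ to a rational function in $y_r$ simply substitutes $-y_r$ for $y_r$. For the same reason, the group action \eqref{group action} yields $(\sigma_r(\bi))_r=i_r-a_{rr}i_r=-i_r$, so the defining branch of $q_r$ that governs $q_r(\sigma_r(\bi))$ is dictated purely by the residue $-i_r\in I$. Thus the whole identity reduces to pairing each value of $i_r$ with its negative.

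Concretely, for each row I would read off from the value $-i_r$ which branch of the definition of $q_r$ controls $q_r(\sigma_r(\bi))$, apply $\sigma_r$ to that function (replacing $y_r$ by $-y_r$), and left-multiply by $q_r(\bi)$. When $i_r=0$ the residue is self-paired and $q_r(\bi)=q_r(\sigma_r(\bi))=1-y_r$, so the product is $(1-y_r)(1+y_r)=1-y_r^2$. When $e\neq 2$ the involution $i_r\mapsto -i_r$ swaps $1$ and $-1$, so the rows $i_r=1$ and $i_r=-1$ draw $q_r(\sigma_r(\bi))$ from the third and second branches respectively; substituting $y_r\mapsto -y_r$ in that factor and multiplying collapses each to the stated expression. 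For $i_r\neq 0,\pm1$ one checks immediately that $-i_r$ is again $\neq 0,\pm1$ (clear from $i_r\neq 0$ and $i_r\neq\pm1$), so both $q_r(\bi)$ and $q_r(\sigma_r(\bi))$ come from the last branch; here $\sigma_r(q_r(\sigma_r(\bi)))=1+(y_r+i_r)^{-1}$, and the product telescopes to $(1-(y_r+i_r)^{-1})(1+(y_r+i_r)^{-1})=1-(y_r+i_r)^{-2}$.

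The row demanding separate care, and where I expect the bookkeeping to be most delicate, is $e=2$. There $1=-1$ in $I=\mathbb{Z}/2\mathbb{Z}$, so $i_r=1$ is self-paired rather than exchanged with $-1$, and hence $q_r(\sigma_r(\bi))$ is again controlled by the fourth branch $(1-y_r)^{-1}$ rather than by the second or third. Applying $\sigma_r$ gives $\sigma_r((1-y_r)^{-1})=(1+y_r)^{-1}$, and left-multiplying by $q_r(\bi)=(1-y_r)^{-1}$ produces the corresponding entry of \eqref{deqrsrqrsr}. I would isolate this case precisely because the $\pm1$ pairing organizing the generic argument degenerates here, so any sign slip in $\sigma_r(y_r)=-y_r$, or any mismatch in selecting the governing branch of $q_r$, would surface exactly in this row.
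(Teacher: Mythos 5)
Your case analysis is correct, and it is precisely the direct verification that the paper itself omits: the paper's ``proof'' of this lemma is the single remark that it follows ``similar to the proof of \cite[(3.27)--(3.29)]{Brundan-Kleshchev09}'', so your computation supplies exactly what that citation delegates. The two organizing facts you use --- $\sigma_r(y_r)=-y_r$ from $a_{rr}=2$, and $\sigma_r(\bi)_r=-i_r$ from (\ref{group action}), so that the branch governing $q_r(\sigma_r(\bi))$ is read off from $-i_r$ --- are the right ones, and the rows $i_r=0$, $i_r=\pm 1$ with $e\neq 2$, and $i_r\neq 0,\pm 1$ all check out as you describe.

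The one place where your write-up stops a line short is the row you yourself flag, $i_r=1$, $e=2$. As literally computed, $q_r(\bi)\,\sigma_r(q_r(\sigma_r(\bi)))=(1-y_r)^{-1}(1+y_r)^{-1}=(1-y_r^2)^{-1}$, which over an arbitrary field is \emph{not} the stated entry $(1-y_r)^{-2}$. The equality holds because $e=2$ forces $2=0$ in $\Bbbk$: the cyclotomic element $\prod_{i\in I}(x_1-i)^{\Lambda_i}$ only makes sense if $I=\mathbb{Z}/e\mathbb{Z}$ embeds additively into $\Bbbk$, i.e.\ $\operatorname{char}\Bbbk=e$, and then $1+y_r=1-y_r$. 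You should say this explicitly; it is the same identification that reconciles the entries $-y_r^2$ and $y_r^2(1-y_r)^{-2}(1+y_r)^{-2}$ one gets when checking this row against (\ref{detheta^2}). With that one sentence added, the proof is complete.
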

Inside $\mathcal{L}$, for each $r\in [n]$, we set $$\theta_r:=\phi_r\sum_{\bi\in \mcC}q_r^{-1}(\bi)\epsilon(\bi).$$

Using Lemma \ref{lem:K(x)=K(y)} and relations (\ref{dexrxs})-(\ref{dephi6}), we know that the elements $\theta_r$ have the following nice properties:

\begin{align}\label{dethetaepsilon} \theta_r\epsilon(\bi)&=\epsilon(\sigma_r(\bi))\theta_r;\\
\label{deftheta} y_s\theta_r&=\theta_r\sigma_r(y_s);\\
\label{detheta2}\theta_r\theta_s&=\theta_s\theta_r \hspace{19mm} \text{if $r\nslash s$};\\
\label{detheta3}\theta_r\theta_s\theta_r&=\theta_s\theta_r\theta_s \hspace{16mm} \text{if $r-\-s$};\\
\label{detheta4}(\theta_r\theta_s)^2&=(\theta_s\theta_r)^2\hspace{15mm} \text{if $\xymatrix@C10pt{r\ar@{=}[r]|{\rangle}\ar@{=}[r]&s}$};\\
\label{detheta6}(\theta_r\theta_s)^3&=(\theta_s\theta_r)^3 \hspace{15mm} \text{if $\xymatrix@C10pt{r\ar@3{-}[r]|{\rangle}\ar@3{-}[r]&s}$};
\end{align}
and \begin{equation}\label{detheta^2} \theta_r^2\epsilon(\bi)=\begin{cases}-y_r^{-2}\epsilon(\bi) & \text{if $i_r=0$},\\
	-y_r\epsilon(\bi) &\text{if $i_r=1, e\neq 2$},\\
	y_r\epsilon(\bi) & \text{if $i_r=-1, e\neq 2$},\\
	-y_r^2\epsilon(\bi)& \text{if $i_r=1, e=2$},\\
	\epsilon(\bi)& \text{if $i_r\neq0,\pm1$}.
	\end{cases}
\end{equation}

	\begin{remark} For a reduced expression $w=\sigma_{r_1} \sigma_{r_2}\cdots \sigma_{r_m}$ in $\mcW$, we define the element $$\theta_w:=\theta_{r_1}\theta_{r_2}\cdots \theta_{r_m}$$
	 in $\mathcal{L}$. Then it is a well-defined element by the braid relations. Since $\phi_r=\theta_r\sum_{\bi\in \mcC} q_r(\bi)\epsilon(\bi)$, thus by Lemma \ref{lem:K(x)=K(y)} and the decompositions (\ref{de Lusztig phi_w}), it can be proved that
	    \begin{equation} \label{de Lusztig theta_w}
	    \mathcal{L}=\oplus_{w\in \mcW,\bi\in \mcC}\Bbbk(y)\theta_w\epsilon(\bi)=\oplus_{w\in \mcW,\bi\in \mcC}\theta_w\Bbbk(y)\epsilon(\bi).
	    \end{equation}
	 \end{remark}

\subsection{The KLR-type generators of $\mathcal{L}$} In the Lusztig extension $\mathcal{L}$, for each $r\in [n]$, we define an element \begin{equation} \label{depsitheta}\psi_r=\sum_{\bi\in \mcC}[\theta_r-\delta_{i_r}^{0}y_r^{-1}]\epsilon(\bi),
\end{equation}
where $$\delta_{i_r}^0=\begin{cases}
1 & \text{if $i_r=0$},\\
0 & \text{if $i_r\neq 0$}
\end{cases}$$ is the Kronecker delta.

Using these elements, we can construct a KLR-type generators of $\mathcal{L}$.
\begin{theorem} \label{thm:deKLR basis} The algebra $\mathcal{L}$ is generated by $\{y_1,\cdots,y_n,\psi_1,\cdots,\psi_n,f^{-1}, \epsilon(\bi)\ | \ 0\neq f\in \Bbbk[y], \bi\in \mcC\}$ subject to the following relations for all admissible indices:
	
	$(1)$  $\epsilon(\bi)\epsilon(\bj)=\delta_{\bi}^{\bj}\epsilon(\bi), \quad \sum_{\bi\in \mcC}\epsilon(\bi)=1.$
	
	$(2)$  $y_r\epsilon(\bi)=\epsilon(\bi)y_r, y_ry_s=y_sy_r.$
	
	$(3)$ For any $0\neq f\in \Bbbk[y]$, there holds that $ff^{-1}=f^{-1}f=1.$

	$(4)$ $\psi_r\epsilon(\bi)=\epsilon(\sigma_r(\bi))\psi_r.$
	
	$(5)$  $\psi_ry_s\epsilon(\bi)=(\sigma_r(y_s)\psi_r+\delta_{i_r}^{0}\parti_r(y_s))\epsilon(\bi).$
	
	$(6)$
	$\psi_r^2\epsilon(\bi)=\begin{cases}0&\text{if $i_r=0$};\\
	-y_r\epsilon(\bi) & \text{if $i_r=1, e\neq 2$};\\
	y_r\epsilon(\bi) & \text{if $i_r=-1, e\neq 2$};\\
	-y_r^2\epsilon(\bi) & \text{if $i_r=1, e=2$};\\
	\epsilon(\bi)&\text{if $i_r\neq0,\pm 1$}.
	\end{cases}$

	$(7)$  If $r\nslash s$, then
$\psi_r\psi_s=\psi_s\psi_r.$
	
	$(8)$ If $r-\-s$, then 	\begin{equation*}\label{depsi3}
	[\psi_r\psi_s\psi_r-\psi_s\psi_r\psi_s]\epsilon(\bi)=\begin{cases}\epsilon(\bi) & \text{if $i_r=-i_s=1, e\neq 2$},\\
	-\epsilon(\bi) &\text{if $i_r=-i_s=-1, e\neq 2$},\\
	(y_r-y_s)\epsilon(\bi) & \text{if $i_r=i_s=1, e=2$},\\
	0 & \text{else}.
	\end{cases}
	\end{equation*}
	
	$(9)$ If $\xymatrix@C10pt{r\ar@{=}[r]|{\rangle}\ar@{=}[r]&s}$, then
	\begin{equation*}\label{depsi4} [(\psi_r\psi_s)^2-(\psi_s\psi_r)^2]\epsilon(\bi)=\begin{cases}-\psi_r\epsilon(\bi) & \text{if $i_s=1, i_r=-2, e\neq 2$};\\
	\psi_r\epsilon(\bi)&\text{if $i_s=-1, i_r=2,e\neq 2$}; \\
	(y_r\psi_r+1)\epsilon(\bi) & \text{if $i_r=0,i_s=1,e=2$},\\
	2\psi_s\epsilon(\bi) & \text{if $i_r=-i_s=1,e\neq 2$};\\
	-2\psi_s \epsilon(\bi)&\text{if $i_r=-i_s=-1, e\neq 2$}; \\
	-4y_s\psi_s\epsilon(\bi) & \text{if $i_r=i_s=1,e=2$};\\
	0 & \text{else}.
	\end{cases}
	\end{equation*}
	
	$(10)$ If $\xymatrix@C10pt{r\ar@3{-}[r]|{\rangle}\ar@3{-}[r]&s}$, then
	\begin{align*}\label{depsi6}
	&[(\psi_r\psi_s)^3-(\psi_s\psi_r)^3]\epsilon(\bi)\notag\\
	&=\begin{cases}-\psi_r\psi_s\psi_r\epsilon(\bi) & \text{if $i_s=1,i_{r}=-3,e\neq 2,3$},\\
	\psi_r\psi_s\psi_r\epsilon(\bi) & \text{if $i_s=-1,i_r=3, e\neq 2,3$},\\
	3\psi_s\psi_r\psi_s\epsilon(\bi) & \text{if $i_r=-i_s=1,e\neq 2$},\\
	-3\psi_s\psi_r\psi_s\epsilon(\bi) & \text{if $i_r=-i_s=-1,e\neq 2$},\\
	-2y_r\psi_r\epsilon(\bi) & \text{if $i_s=\pm 1,i_{r}=\mp 2,e\neq 2$},\\
	-4(y_{r}^2+3y_sy_{s1})(y_r\psi_r+1)\epsilon(\bi) & \text{if $i_r=0,i_s=1, e=2$},\\
	-2y_s\psi_s\epsilon(\bi) & \text{if $i_r=\pm 1,i_{s}= 2, e=4$},\\
	\psi_s\epsilon(\bi) & \text{if $i_r=1,i_{s}\neq -2, e\neq 2$},\\
	-\psi_s\epsilon(\bi) & \text{if $i_r=-1,i_{s}\neq -2, e\neq 2$},\\
	\psi_s\epsilon(\bi) & \text{if $i_r=3,i_{s}=-2, e\neq 2,3$},\\
	-\psi_s\epsilon(\bi) & \text{if $i_r=-3,i_{s}=2, e\neq 2,3$},\\
	4(y_{r}^2+3y_sy_{s1})(y_s\psi_s+1)\epsilon(\bi) & \text{if $i_r=1, i_s=0, e=2$},\\
	(y_{s1}\psi_r\psi_s\psi_r-3y_{r1}\psi_s\psi_r\psi_s-y_r^2+3y_s^2)\epsilon(\bi) & \text{if $i_r=i_s=1, e=2$},\\
	-\psi_r\psi_s\psi_r\epsilon(\bi) & \text{if $i_r=0,i_s=1, e=3$},\\
	\psi_r\psi_s\psi_r\epsilon(\bi) & \text{if $i_r=0,i_s=-1, e=3$},\\
	0 & \text{else}.
	\end{cases}
	\end{align*}	
\end{theorem}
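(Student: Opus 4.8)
The plan is to prove the presentation in three stages: that the listed KLR-type elements generate $\mathcal{L}$, that relations $(1)$--$(10)$ hold in $\mathcal{L}$, and that these relations are complete in the sense that they present $\mathcal{L}$. Generation is the easy half. From the new generators one recovers the old ones via $x_r=\sum_{\bi\in\mcC}(y_r+i_r)\epsilon(\bi)$, via $\theta_r=\psi_r+\sum_{\bi\in\mcC}\delta_{i_r}^0 y_r^{-1}\epsilon(\bi)$ (the defining equation \eqref{depsitheta}), and via $\phi_r=\theta_r\sum_{\bi\in\mcC}q_r(\bi)\epsilon(\bi)$; conversely the new generators are built from the old by the same formulas read backwards, and Lemma \ref{lem:K(x)=K(y)} guarantees that $f^{-1}$ for $0\ne f\in\Bbbk[y]$ already lies in $\mathcal{L}$. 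Hence the generating set of Proposition \ref{prop:deintert basis} and the one in the statement generate the same algebra.

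The bulk of the work is the verification of $(1)$--$(10)$, which I would carry out by direct computation from the properties of the $\theta_r$ established earlier, namely \eqref{dethetaepsilon}--\eqref{detheta^2}, together with the commutativity of the $y_r$ with the $\epsilon(\bi)$ and the numbers-game bookkeeping encoded in $i_{s1},\dots,i_{s5}$ and $y_{r1},\dots,y_{r5}$. Relations $(1)$--$(4)$ are immediate from \eqref{deepsilon}, \eqref{dexepis}, \eqref{dethetaepsilon} and the definition \eqref{depsitheta}. Relation $(5)$ follows from \eqref{deftheta} after accounting for the Leibniz term $\delta_{i_r}^0\parti_r(y_s)$ produced by the $y_r^{-1}$ summand of $\psi_r$, and relation $(6)$ is the short identity $\psi_r^2=\theta_r^2-\delta_{i_r}^0(\theta_r y_r^{-1}+y_r^{-1}\theta_r-y_r^{-2})$ evaluated with \eqref{detheta^2} and \eqref{deftheta}, the $i_r=0$ line collapsing to $0$. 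The braid-type relations $(7)$--$(10)$ are obtained by expanding $(\psi_r\psi_s)^k$ into $(\theta_r\theta_s)^k$ plus lower terms in the $y^{-1}$ corrections, invoking the clean $\theta$-braid relations \eqref{detheta2}--\eqref{detheta6}, and then simplifying the resulting rational expressions case by case in $i_r,i_s,e$, exactly in the spirit of \cite[(3.27)--(3.29)]{Brundan-Kleshchev09}.

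For completeness I would reuse the rank argument of Proposition \ref{prop:deintert basis}. Let $\mathcal{B}$ be the abstract $\Bbbk$-algebra on the KLR generators modulo $(1)$--$(10)$; the first two stages yield a surjection $\pi\colon\mathcal{B}\to\mathcal{L}$ that is linear over $R:=\Bbbk(y)\otimes_\Bbbk\mathcal{E}=\bigoplus_{\bi\in\mcC}\Bbbk(y)\epsilon(\bi)$. Fixing a reduced word for each $w\in\mcW$ and letting $\psi_w$ be the corresponding product, relations $(4)$--$(7)$ push all coefficients to the right while $(6)$ and the braid relations $(8)$--$(10)$ reduce any word in the $\psi_r$ to the chosen reduced words modulo strictly shorter words; an induction on length then shows that $\{\psi_w\mid w\in\mcW\}$ spans $\mathcal{B}$ over $R$, so $\mathcal{B}$ has rank $\le|\mcW|$. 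On the other hand $\psi_r=\theta_r-\delta_{i_r}^0 y_r^{-1}$ gives $\pi(\psi_w)=\theta_w$ plus an $R$-combination of $\theta_u$ with $\ell(u)<\ell(w)$; this is unitriangular with respect to the length filtration relative to the $R$-basis $\{\theta_w\}$ of $\mathcal{L}$ from \eqref{de Lusztig theta_w}, so $\{\pi(\psi_w)\}$ is again a free $R$-basis of $\mathcal{L}$. A spanning set of size $\le|\mcW|$ mapping onto a rank-$|\mcW|$ basis must itself be a basis and $\pi$ an isomorphism, which is the asserted presentation.

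I expect the decisive obstacle to be the triple-edge relation $(10)$: its long list of subcases forces one to track the numbers game through $i_{s1},\dots,i_{s5}$ and $y_{r1},\dots,y_{r5}$ and to collapse the accumulated $q_r$-corrections correctly for each combination of $i_r,i_s$ and $e$, and it is there that sign errors or overlooked cases are most likely to arise.
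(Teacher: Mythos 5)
Your proposal follows essentially the same route as the paper: relations $(1)$--$(10)$ are verified by expanding the $\psi_r=\theta_r-\delta_{i_r}^0y_r^{-1}$ corrections against the clean $\theta$-relations \eqref{dethetaepsilon}--\eqref{detheta^2}, and completeness is the same rank-counting argument over $\Bbbk(y)\otimes_\Bbbk\mathcal{E}$ using that $\{\psi_w\}$ is unitriangular against the basis $\{\theta_w\}$ of \eqref{de Lusztig theta_w}. Your write-up of the completeness step is in fact slightly more explicit than the paper's, but the substance is identical.
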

\begin{proof} The statement (1) holds since the definition of the semi-simple algebra $\mathcal{E}$, and the statements (2)-(3) follow straightforwardly from the construction of the elements $y_r$.
	
(4)	By (\ref{dethetaepsilon}) and (\ref{depsitheta}), we get \begin{equation}\label{depsiepsilon}\psi_r\epsilon(\bi)=(\theta_r-\delta_{i_r}^0y_r^{-1})\epsilon(\bi)=\epsilon(\sigma_r(\bi))(\theta_r-\delta_{i_r}^0y_r^{-1})=\epsilon(\sigma_r(\bi))\psi_r\end{equation}
	where the third identity holds since $\sigma_r(\bi)=\bi$ whenever $i_r=0$.
	
(5) As a result of (\ref{depsitheta}) and (\ref{deftheta}), it follows that
\begin{align*}\psi_ry_s\epsilon(\bi)&=(\theta_r-\delta_{i_r}^0y_r^{-1})y_s\epsilon(\bi)\\
&=(\sigma_r(y_s)\theta_r-\delta_{i_r}^0y_sy_r^{-1})\epsilon(\bi) \\
&=(\sigma_r(y_s)\psi_r+\delta_{i_r}^0\sigma_r(y_s)y_r^{-1}-\delta_{i_r}^0y_sy_r^{-1})\epsilon(\bi)\\
&=(\sigma_r(y_s)\psi_r+\delta_{i_r}^0\parti_r(y_s))\epsilon(\bi).
\end{align*}

(6) Using (\ref{depsitheta}), (\ref{dethetaepsilon}) and (\ref{detheta^2}), we obtain
\begin{align*}
\psi_r^2\epsilon(\bi)=\psi_r\epsilon(\sigma_r(\bi))\psi_r\epsilon(\bi)
&=(\theta_r-\delta_{i_r}^0y_r^{-1})(\theta_r-\delta_{i_r}^0y_r^{-1})\epsilon(\bi)\\
&=(\theta_r^2+\delta_{i_r}^0y^{-2}_r)\epsilon(\bi)\\
&=\begin{cases}0 & \text{if $i_r=0$},\\		
-y_r\epsilon(\bi) &\text{if $i_r=1, e\neq 2$},\\
y_r\epsilon(\bi) & \text{if $i_r=-1, e\neq 2$},\\
-y_r^2\epsilon(\bi)& \text{if $i_r=1, e=2$},\\
\epsilon(\bi)& \text{if $i_r\neq0,\pm1$}.
\end{cases}
\end{align*}

(7) Since $r\nslash s$, there holds that
\begin{align*}\psi_r\psi_s\epsilon(\bi)&=\psi_r\epsilon(\sigma_s(\bi))\psi_s\epsilon(\bi)\\
&=(\theta_r-\delta_{i_r}^0y_r^{-1})(\theta_s-\delta_{i_s}^0y_s^{-1})\epsilon(\bi)\\
&=(\theta_r\theta_s-\delta_{i_r}^0y_r^{-1}\theta_s-\delta_{i_s}^0\theta_ry_s^{-1}+\delta_{i_r}^0\delta_{i_s}^0y_r^{-1}y_s^{-1})\epsilon(\bi)\\
&=(\theta_s\theta_r-\delta_{i_r}^0\theta_sy_r^{-1}-\delta_{i_s}^0y_s^{-1}\theta_r+\delta_{i_r}^0\delta_{i_s}^0y_r^{-1}y_s^{-1})\epsilon(\bi)\\
&=(\theta_s-\delta_{i_s}^0y_s^{-1})(\theta_r-\delta_{i_r}^0y_r^{-1})\epsilon(\bi)	\\
&=\psi_s\psi_r\epsilon(\bi)
\end{align*}
Hence $\psi_r\psi_s=\psi_s\psi_r$.

(8) Since $r-\-s$, we have
\begin{align*}\psi_r\psi_s\psi_r\epsilon(\bi)&=(\theta_r-\delta_{i_{r2}}^0y_r^{-1})(\theta_s-\delta_{i_{s1}}^0y_s^{-1})(\theta_r-\delta_{i_r}^0y_r^{-1})\epsilon(\bi)\\
&=[\theta_r\theta_s\theta_r-\delta_{i_r}^0y_{s}^{-1}\theta_r\theta_s-\delta_{i_{s}}^{0}y_r^{-1}\theta_s\theta_r+\delta_{i_r}^0\delta_{i_{s}}^{0}(y_s^{-1}y_{s1}^{-1}\theta_r+y_r^{-1}y^{-1}_{r1}\theta_s)\\
&-\delta_{i_{s1}}^0y_{s1}^{-1}\theta_r^2-\delta_{i_r}^0\delta_{i_{s1}}^{0}y_{r}^{-2}y_s^{-1}]\epsilon(\bi)
\end{align*}
Similarly, we can show that
\begin{align*}\psi_s\psi_r\psi_s\epsilon(\bi)&=(\theta_s-\delta_{i_{s2}}^0y_s^{-1})(\theta_r-\delta_{i_{r1}}^0y_r^{-1})(\theta_s-\delta_{i_s}^0y_s^{-1})\epsilon(\bi)\\
&=[\theta_s\theta_r\theta_s-\delta_{i_{r}}^{0}y_s^{-1}\theta_r\theta_s-\delta_{i_s}^0y_{r}^{-1}\theta_s\theta_r+\delta_{i_{r}}^{0}\delta_{i_{s}}^0(y_s^{-1}y^{-1}_{s1}\theta_r+y_r^{-1}y_{r1}^{-1}\theta_s)\\
&-\delta_{i_{r1}}^0y_{r1}^{-1}\theta_s^2-\delta_{i_{r}}^{0}\delta_{i_{r1}}^0y_r^{-1}y_{s}^{-2}]\epsilon(\bi).
\end{align*}
By (\ref{detheta3}) and (\ref{detheta^2}), we arrive at
\begin{align*}
[\psi_r\psi_s\psi_r-\psi_s\psi_r\psi_s]\epsilon(\bi)&=\delta_{i_{r1}}^0[y_{r1}^{-1}(\theta_s^2-\theta_r^2)+\delta_{i_r}^0(y_r^{-1}y_{s}^{-2}-y_{r}^{-2}y_{s}^{-1})]\epsilon(\bi)\\
&=\begin{cases}\epsilon(\bi) & \text{if $i_r=-i_s=1, e\neq 2$},\\
-\epsilon(\bi) &\text{if $i_r=-i_s=-1, e\neq 2$},\\
(y_r-y_s)\epsilon(\bi) & \text{if $i_r=i_s=1, e=2$},\\
0 & \text{else}.
\end{cases}
\end{align*}

(9) Since $\xymatrix@C10pt{r\ar@{=}[r]|{\rangle}\ar@{=}[r]&s}$, by (\ref{depsiepsilon}), (\ref{depsitheta}) and (\ref{deftheta}), we deduce that
\begin{align*}
&\psi_r\psi_s\psi_r\psi_s\epsilon(\bi)\\
&=(\theta_r-\delta_{i_{r3}}^0y_r^{-1})(\theta_s-\delta_{i_{s2}}^0y_s^{-1})(\theta_r-\delta_{i_{r1}}^0y_r^{-1})(\theta_s-\delta_{i_{s}}^0y_s^{-1})\epsilon(\bi)\\
&=[(\theta_r\theta_s)^2-\delta_{i_s}^0y_{s}^{-1}\theta_r\theta_s\theta_r-\delta_{i_{r}}^0y_{r}^{-1}\theta_s\theta_r\theta_s+\delta_{i_{r}}^0\delta_{i_{s}}^0y_{r1}^{-1}y_{s}^{-1}\theta_r\theta_s+\delta_{i_{r}}^0\delta_{i_s}^0y_r^{-1}y_{s1}^{-1}\theta_s\theta_r\\
&-(\delta_{i_{r}}^0\delta_{i_s}^0y_s^{-1}y_{s1}^{-2}+\delta_{i_{r1}}^0y_{r1}^{-1}\theta_{s1}^{2})\theta_r-(2\delta_{i_{r}}^0\delta_{i_{s}}^0y_r^{-2}y_{r1}^{-1}+\delta_{i_{s1}}^0y_{s1}^{-1}\theta_{r}^{2})\theta_s+\delta_{i_r}^0\delta_{i_{s}}^0y_s^{-1}y_{s1}^{-1}\theta_{r}^2\\
&+\delta_{i_{r}}^0\delta_{i_{r1}}^0y_r^{-1}y_{r1}^{-1}\theta_s^2+\delta_{i_{r}}^0\delta_{i_{s}}^0y_r^{-2}y_s^{-2}]\epsilon(\bi).
\end{align*}	
The following is a similar calculation using (\ref{depsiepsilon}), (\ref{depsitheta}) and (\ref{deftheta}):
\begin{align*}&\psi_s\psi_r\psi_s\psi_r\epsilon(\bi)\\
&=[(\theta_s\theta_r)^2-\delta_{i_{s}}^0y_{s}^{-1}\theta_r\theta_s\theta_r-\delta_{i_r}^0y_{r}^{-1}\theta_s\theta_r\theta_s+\delta_{i_r}^0\delta_{i_{s}}^0y_{r1}^{-1}y_s^{-1}\theta_r\theta_s+\delta_{i_{r}}^0\delta_{i_{s}}^0y_r^{-1}y_{s1}^{-1}\theta_s\theta_r\\
&-(\delta_{i_r}^0\delta_{i_{s}}^0y_s^{-2}y_{s1}^{-1}+\delta_{i_{r1}}^0y_{r1}^{-1}\theta_{s}^{2})\theta_r-(2\delta_{i_r}^0\delta_{i_{s}}^0y_{r}^{-1}y_{r1}^{-2}+\delta_{i_{s1}}^0y_{s1}^{-1}\theta_{r1}^{2})\theta_s+\delta_{i_{r}}^0\delta_{i_{s}}^0y_s^{-1}y_{s1}^{-1}\theta_r^2\\
&+\delta_{i_r}^0\delta_{i_{r1}}^0y_r^{-1}y_{r1}^{-1}\theta_{s}^2+\delta_{i_r}^0\delta_{i_{s}}^0y_r^{-2}y_s^{-2}]\epsilon(\bi).
\end{align*}
By (\ref{detheta4}), (\ref{detheta^2}) and (\ref{depsitheta}), we know that
\begin{align*}
&\{[\psi_r\psi_s\psi_r\psi_s-\psi_s\psi_r\psi_s\psi_r]\epsilon(\bi)\\
&=[\delta_{i_{r}}^0\delta_{i_s}^0y_s^{-2}y_{s1}^{-2}y_r+\delta_{i_{r1}}^0y_{r1}^{-1}(\theta_s^{2}-\theta_{s1}^{2})]\theta_r-[\delta_{i_{r}}^0\delta_{i_{s}}^04y_r^{-2}y_{r1}^{-2}y_{s}+\delta_{i_{s1}}^0y_{s1}^{-1}(\theta_{r}^{2}-\theta_{r1}^2)]\theta_s\}\epsilon(\bi)\\
&=\begin{cases}-\psi_r\epsilon(\bi) & \text{if $i_s=1, i_r=-2, e\neq 2$};\\
\psi_r\epsilon(\bi)&\text{if $i_s=-1, i_r=2,e\neq 2$}; \\
(y_r\psi_r+1)\epsilon(\bi) & \text{if $i_r=0,i_s=1,e=2$},\\
2\psi_s\epsilon(\bi) & \text{if $i_r=-i_s=1,e\neq 2$};\\
-2\psi_s \epsilon(\bi)&\text{if $i_r=-i_s=-1, e\neq 2$}; \\
-4y_s\psi_s\epsilon(\bi) & \text{if $i_r=i_s=1,e=2$};\\
0 & \text{else}.
\end{cases}
\end{align*}

(10)  Since $\xymatrix@C10pt{r\ar@3{-}[r]|{\rangle}\ar@3{-}[r]&s}$, as a result of (\ref{depsiepsilon}), (\ref{depsitheta}) and (\ref{deftheta}), the following holds:	
\begin{align*}
&(\psi_r\psi_s)^3\epsilon(\bi)\\
&=(\theta_r-\delta_{i_{r5}}^0y_r^{-1})(\theta_s-\delta_{i_{s4}}^0y_s^{-1})(\theta_r-\delta_{i_{r3}}^0y_r^{-1})(\theta_s-\delta_{i_{s2}}^0y_s^{-1})(\theta_{r}-\delta_{i_{r1}}^0y_r^{-1})(\theta_s-\delta_{i_{s}}^0y_s^{-1})\epsilon(\bi)\\
&=\{(\theta_r\theta_s)^3-\delta_{i_s}^0y_{s}^{-1}(\theta_r\theta_s)^2\theta_r-\delta_{i_r}^0y_r^{-1}(\theta_s\theta_r)^2\theta_s+\delta_{i_r}^0\delta_{i_s}^0y_r^{-1}y_{s1}^{-1}(\theta_s\theta_r)^2+\delta_{i_{r}}^0\delta_{i_{s}}^0y_{r1}^{-1}y_s^{-1}(\theta_r\theta_s)^2\\
&-(\delta_{i_r}^0\delta_{i_{s}}^0y_s^{-1}y_{s2}^{-2}+\delta_{i_{r1}}^0y_{r1}^{-1}\theta_{s2}^{2})\theta_r\theta_s\theta_r-(\delta_{i_r}^0\delta_{i_{s}}^03y_r^{-2}y_{r2}^{-1}+\delta^0_{i_{s1}}y_{s1}^{-1}\theta_r^2)]\theta_s\theta_r\theta_s\\
&+\delta_{i_r}^0[\delta_{i_{s}}^0(3y_r^{-1}y_{r2}^{-2}y_{s1}^{-1}+y_r^{-2}y_{s1}^{-2}+2y_r^{-1}y_{r2}^{-1}y_s^{-2}-y_{s1}^{-1}y_{s2}^{-1}\theta_{r2}^2+y_{s}^{-1}y_{s1}^{-1}\sigma_r(\theta_{r}^{2}))\\
&+\delta_{i_{r2}}^0y_r^{-1}(y_{r1}^{-1}\theta_s^2-y_{r2}^{-1}\sigma_r\sigma_s(\theta_s^2))]\theta_r\theta_s+\delta_{i_r}^0[\delta_{i_s}^0(2y^{-2}_ry_{s2}^{-2}+y_{s1}^{-1}y_{s2}^{-1}\theta_r^2)+\delta_{i_{r1}}^0y_r^{-1}y_{r2}^{-1}\theta_{s2}^{2}]\theta_s\theta_r\\
&+[\delta_{i_{r}}^0\delta_{i_{s}}^0y_{r2}^{-1}(2y_{r}^{-1}y_{s1}^{-1}+y_{r2}^{-1}y_{s1}^{-1}-y_r^{-1}y_s^{-1})\theta_{s1}^2+\delta_{i_r}^0\delta_{i_{s}}^0y_r^{-1}y_{s1}^{-1}(y_s^{-1}y_{s1}^{-2}-y_{r}^{-1}y_{s}^{-2}-y_{r1}^{-1}\theta_s^2\\
&-y_ry_{s}^{-1}y_{s1}^{-1}\sigma_r(\theta_{r}^2))-\delta_{i_{s2}}^0y_{s2}^{-1}\theta_{r2}^{2}\theta_{s1}^2]\theta_r+[\delta_{i_r}^0\delta_{i_{s}}^0(y_r^{-1}y_s^{-1}y_{s2}^{-1}\theta_{r1}^2-3y_r^{-1}y_{r1}^{-1}y_{s1}^{-1}\theta_r^2-y_s^{-1}y_{s1}^{-2}\theta_r^2)\\	
&+\delta_{i_r}^0\delta_{i_{s}}^0y_r^{-2}y_{s}^{-1}(3y_{r1}^{-2}-y_r^{-1}y_s^{-1})-\delta_{i_{r2}}^0y_{r2}^{-1}\theta_{s1}^2\theta_r^2-\delta_{i_r}^0\delta_{i_{r1}}^0\delta_{i_{r2}}^0y_{r}^{-2}y_{r1}^{-1}\sigma_s(\theta_s^2))]\theta_s\\
&+\delta_{i_r}^0\delta_{i_{s}}^0(2y_{r}^{-2}y_{r1}^{-1}y_s^{-1}\theta_s^2-y_r^{-1}y_{r1}^{-2}y_{s}^{-1}\theta_s^2+y_{s}^{-2}y_{s1}^{-2}\theta_r^2+y_r^{-1}y_s^{-2}y_{s1}^{-1}\theta_{r}^2+y_r^{-3}y_s^{-3})\\
&+\delta_{i_{r2}}^0\delta_{i_{s}}^0y_{r2}^{-1}y_{s}^{-1}\theta_{s1}^2\theta_r^2+\delta_{i_{r1}}^0\delta_{i_{s1}}^0y_{r1}^{-1}y_{s1}^{-1}\theta_r^2\theta_{s}^{2}+\delta_{i_r}^0\delta_{i_{s2}}^0y_r^{-1}y_{s2}^{-1}\theta_{r1}^{2}\theta_s^2
\}\epsilon(\bi)
\end{align*}	
Imitate the proof above, we have that
\begin{align*}
&(\psi_s\psi_r)^3\epsilon(\bi)\\
&=(\theta_s-\delta_{i_{s5}}^0y_s^{-1})(\theta_r-\delta_{i_{r4}}^0y_r^{-1})(\theta_s-\delta_{i_{s3}}^0y_s^{-1})(\theta_r-\delta_{i_{r2}}^0y_r^{-1})(\theta_{s}-\delta_{i_{s1}}^0y_s^{-1})(\theta_r-\delta_{i_{r}}^0y_r^{-1})\epsilon(\bi)\\
&=\{[(\theta_s\theta_r)^3-\delta_{i_s}^0y_s^{-1}(\theta_r\theta_s)^2\theta_r-\delta_{i_r}^0y_{r}^{-1}(\theta_s\theta_r)^2\theta_s+\delta_{i_r}^0\delta_{i_s}^0
y_{r1}^{-1}y_s^{-1}(\theta_r\theta_s)^2+\delta_{i_{r}}^0\delta_{i_{s}}^0y_{r}^{-1}y_{s1}^{-1}(\theta_s\theta_r)^2\\
&-(\delta_{i_{r}}^0\delta_{i_s}^0y_s^{-2}y_{s2}^{-1}+\delta^0_{i_{r1}}y_{r1}^{-1}\theta_s^2)\theta_r\theta_s\theta_r-(\delta_{i_r}^0\delta_{i_{s}}^03y_r^{-1}y_{r2}^{-2}+\delta_{i_{s1}}^0y_{s1}^{-1}\theta_{r2}^{2})\theta_s\theta_r\theta_s+\delta_{i_r}^0[\delta_{i_{s}}^0(2y_{r2}^{-2}y_s^{-2}\\
&+y_s^{-1}y_{s2}^{-1}\theta_{r2}^{2})+\delta_{i_{r1}}^0y_{r1}^{-1}y_{r2}^{-1}\theta_s^2]
\theta_r\theta_s+\delta_{i_r}^0[\delta_{i_{s}}^0y_{r1}^{-1}(y_{r1}^{-1}y_{s2}^{-2}+y_s^{-2}y_{s2}^{-1})+\delta_{i_s}^0y_s^{-1}(2y_r^{-2}y_{s2}^{-1}+y_{s1}^{-1}\theta_r^2\\
&-y_{s2}^{-1}\sigma_s\sigma_r(\theta_r^2))+\delta_{i_{r1}}^0y_{r1}^{-1}(y_{r}^{-1}\sigma_s(\theta_{s}^{2})-y_{r2}^{-1}\theta_{s2}^2)]\theta_s\theta_r+[\delta_{i_{r}}^0\delta_{i_{s}}^0(y_r^{-1}y_{r2}^{-1}y_s^{-1}\theta_{s1}^2-2y_{r1}^{-2}y_{s1}^{-1}\theta_s^2\\
&-y_r^{-1}y_{r1}^{-1}y_s^{-1}\theta_s^2+y_r^{-1}y_s^{-2}y_{s1}^{-2}-y_r^{-2}y_s^{-3}-y_{s}^{-2}y_{s1}^{-1}\sigma_r(\theta_r^2))-\delta_{i_{s2}}^0y_{s2}^{-1}\theta_{r1}^2\theta_s^2]\theta_r+[\delta_{i_{r}}^0\delta_{i_{s}}^0(2y_{r1}^{-1}y_{s}^{-1}y_{s2}^{-1}\\
&+y_{r1}^{-1}y_{s2}^{-2}-y_r^{-1}y_s^{-1}y_{s2}^{-1})\theta_{r1}^2-\delta_{i_{r2}}^0y_{r2}^{-1}\theta_{s2}^{2}\theta_{r1}^2-\delta_{i_r}^0\delta_{i_{r1}}^0\delta_{i_{r2}}^0y_{r1}^{-2}y_{r}^{-1}\sigma_s(\theta_{s}^2)+\delta_{i_{r}}^0\delta_{i_s}^0(3y_r^{-1}y_{r1}^{-3}y_s^{-1}\\
&-y_{r1}^{-1}y_s^{-1}y_{s1}^{-1}\theta_r^2-y_{r}^{-2}y_{r1}^{-1}y_{s}^{-2})]\theta_s+\delta_{i_r}^0\delta_{i_s}^0[(2y_r^{-1}y_{s}^{-2}y_{s1}^{-1}-y_{r}^{-1}y_s^{-1}y_{s1}^{-2})\theta_r^2+(3y_{r}^{-2}y_{r1}^{-2}\\
&+y_r^{-2}y_{r1}^{-1}y_s^{-1})\theta_{s}^2+y_r^{-3}y_s^{-3}]+(\delta_{i_{r}}^0\delta_{i_{s2}}^0y_{r}^{-1}y_{s2}^{-1}\theta_{r1}^2+\delta_{i_{r1}}^0\delta_{i_{s1}}^0y_{r1}^{-1}y_{s1}^{-1}\theta_{r}
^{2})\theta_s^2+\delta_{i_{r2}}^0\delta_{i_s}^0y_{r2}^{-1}y_s^{-1}\theta_{s1}^{2}\theta_r^2
\}\epsilon(\bi)
\end{align*}	
Using relations (\ref{detheta6}), (\ref{detheta^2}) and  (\ref{depsitheta}), we see that
\begin{align*}
&[(\psi_r\psi_s)^3-(\psi_s\psi_r)^3]\epsilon(\bi)\\
&=\{[\delta_{i_{r}}^0\delta_{i_s}^0y_s^{-2}y_{s2}^{-2}y_{s1}+\delta_{i_{r1}}^0y_{r1}^{-1}(\theta_s^2-\theta_{s2}^2)]\theta_r\theta_s\theta_r-[\delta_{i_{r}}^0\delta_{i_s}^03y_r^{-2}y_{r2}^{-2}y_{r1}+\delta_{i_{s1}}^0y_{s1}^{-1}(\theta_{r}^2-\theta_{r2}^2)]\theta_s\theta_r\theta_s\\
&+[\delta_{i_r}^0\delta_{i_s}^0(3y_{r}^{-1}y_{r2}^{-2}y_{s1}^{-1}+y_r^{-2}y_{s1}^{-2}+2y_{r}^{-1}y_{r2}^{-2}y_s^{-2}y_{r1})+\delta_{i_r}^0\delta_{i_{s}}^0y_s^{-1}y_{s1}^{-1}(\sigma_r(\theta_r^2)-\theta_{r2}^2)\\
&+\delta_{i_r}^0\delta_{i_{r1}}^0y_r^{-1}y_{r2}^{-1}(\theta_s^2-\sigma_r\sigma_s(\theta_s^2))]\theta_r\theta_s-[\delta_{i_r}^0\delta_{i_s}^0(2y_r^{-2}y_{s}^{-1}y_{s2}^{-2}y_{s1}+y_{r1}^{-2}y_{s2}^{-2}+y_{r1}^{-1}y_{s}^{-2}y_{s2}^{-1})\\
&+\delta_{i_r}^0\delta_{i_{r1}}^0y_r^{-1}y_{r1}^{-1}(\sigma_s(\theta_s^2)-\theta_{s2}^2)+\delta_{i_r}^0\delta_{i_{s}}^0y_s^{-1}y_{s2}^{-1}(\theta_r^2-\sigma_s\sigma_r(\theta_r^2))]\theta_s\theta_r+[\delta_{i_r}^0\delta_{i_{s}}^0(y_{r1}^{-1}y_s^{-1}y_{s1}^{-1}\theta_s^2\\
&+2y_{r1}^{-2}y_{s1}^{-1}\theta_s^2+y_{r2}^{-2}y_{s1}^{-1}\theta_{s1}^{2}-2y_{r2}^{-1}y_{s}^{-1}y_{s1}^{-1}\theta_{s1}^{2}+y_{s}^{-2}y_{s1}^{-2}y_r\sigma_r(\theta_{r}^2)+y_s^{-3}y_{s1}^{-2}+y_r^{-1}y_s^{-1}y_{s1}^{-3})\\
&+\delta_{i_{s2}}^0y_{s2}^{-1}(\theta_{r1}^2\theta_s^2-\theta_{r2}^{2}\theta_{s1}^2)]\theta_r+[\delta_{i_r}^0\delta_{i_{s}}^0(6y_{r}^{-1}y_{r1}^{-1}y_{s2}^{-1}\theta_{r1}^{2}-y_{r1}^{-1}y_{s2}^{-2}\theta_{r1}^{2}+9y_r^{-2}y_{r1}^{-3}-2y_{r1}^{-1}y_{s1}^{-2}\theta_r^2\\
&-3y_r^{-1}y_{r1}^{-1}y_{s1}^{-1}\theta_r^2-3y_r^{-3}y_{r1}^{-1}y_{s}^{-1})-\delta_{i_r}^0\delta_{i_{r1}}^03y_{r}^{-2}y_{r1}^{-2}y_s\sigma_s(\theta_{s}^2)+\delta_{i_{r2}}^0y_{r2}^{-1}(\theta_{r1}^2\theta_{s2}^2-\theta_{r}^{2}\theta_{s1}^2)]\theta_s\}\epsilon(\bi)\\	
&=\begin{cases}  \delta_{i_{r1}}^0\frac{\theta_s^2-\theta_{s2}^2}{y_{r1}}\theta_r\theta_s\theta_r\epsilon(\bi) & \text{if $i_{r1}=0,i_r\neq 0,i_{s1}\neq 0$},\\
\delta_{i_{s1}}^0\frac{\theta_{r2}^2-\theta_{r}^2}{y_{s1}}\theta_s\theta_r\theta_s\epsilon(\bi) & \text{if $i_{s1}=0,i_r\neq 0,i_{r1}\neq 0$},\\
\delta_{i_{s2}}^0\frac{\theta_{r1}^2\theta_s^2-\theta_{r2}^{2}\theta_{s1}^2}{y_{s2}}\theta_r\epsilon(\bi) & \text{if $i_{s2}=0,i_r\neq 0$},\\
\delta_{i_{r2}}^0\frac{\theta_{r1}^2\theta_{s2}^2-\theta_{r}^{2}\theta_{s1}^2}{y_{r2}}\theta_s\epsilon(\bi) & \text{if $i_{r2}=0,i_r\neq 0$},\\
\delta_{i_{s2}}^0\frac{\theta_{r1}^2\theta_s^2-\theta_{r2}^{2}\theta_{s1}^2}{y_{s2}}\theta_r\epsilon(\bi) & \text{if $i_{s2}=0,i_r=0,i_s\neq 0$},\\
\delta_{i_{r2}}^0\frac{\theta_{r1}^2\theta_{s2}^2-\theta_{r}^{2}\theta_{s1}^2}{y_{r2}}\theta_s\epsilon(\bi) & \text{if $i_{r2}=0,i_s=0,i_r\neq 0$},\\
(\delta_{i_{r1}}^0\frac{\theta_s^2-\theta_{s2}^2}{y_{r1}}\theta_r\theta_s\theta_r+\delta_{i_{s1}}^0\frac{\theta_{r2}^2-\theta_{r}^2}{y_{s1}}\theta_s\theta_r\theta_s)\epsilon(\bi) & \text{if $i_{r1}=0,i_{s1}=0,i_{r}\neq 0$},\\
(\delta_{i_{r1}}^0\frac{\theta_s^2-\theta_{s2}^2}{y_{r1}}\theta_r\theta_s\theta_r+\delta_{i_r}^0\delta_{i_{r1}}^0\frac{\sigma_r\sigma_s(\theta_s^2)-\theta_s^2}{\sigma_r(y_r)y_{r2}}\theta_r\theta_s\\
+\delta_{i_r}^0\delta_{i_{r1}}^0\frac{\theta_{s2}^2-\sigma_s(\theta_s^2)}{y_ry_{r1}}\theta_s\theta_r-\delta_{i_r}^0\delta_{i_{r1}}^0\frac{3y_s\sigma_s(\theta_{s}^2)}{y_{r}^{2}y_{r1}^{2}}\theta_s\\
+\delta_{i_{r2}}^0\frac{\theta_{r1}^2\theta_{s2}^2-\theta_{r}^{2}\theta_{s1}^2}{y_{r2}}\theta_s)\epsilon(\bi) & \text{if $i_r=0,i_{r1}=0,i_{r2}=0,i_{s}\neq 0$},\\
0&\text{else}
\end{cases}	\\
&=\begin{cases}-\psi_r\psi_s\psi_r\epsilon(\bi) & \text{if $i_s=1,i_{r}=-3,e\neq 2,3$},\\
\psi_r\psi_s\psi_r\epsilon(\bi) & \text{if $i_s=-1,i_r=3, e\neq 2,3$},\\
3\psi_s\psi_r\psi_s\epsilon(\bi) & \text{if $i_r=-i_s=1,e\neq 2$},\\
-3\psi_s\psi_r\psi_s\epsilon(\bi) & \text{if $i_r=-i_s=-1,e\neq 2$},\\
-2y_r\psi_r\epsilon(\bi) & \text{if $i_s=\pm 1,i_{r}=\mp 2,e\neq 2$},\\
-2y_s\psi_s\epsilon(\bi) & \text{if $i_r=\pm 1,i_{s}= 2, e=4$},\\
\psi_s\epsilon(\bi) & \text{if $i_r=1,i_{s}\neq -2, e\neq 2$},\\
-\psi_s\epsilon(\bi) & \text{if $i_r=-1,i_{s}\neq -2, e\neq 2$},\\
\psi_s\epsilon(\bi) & \text{if $i_r=3,i_{s}=-2, e\neq 2,3$},\\
-\psi_s\epsilon(\bi) & \text{if $i_r=-3,i_{s}=2, e\neq 2,3$},\\
-4(y_{r}^2+3y_sy_{s1})(y_r\psi_r+1)\epsilon(\bi) & \text{if $i_r=0,i_s=1, e=2$},\\
4(y_{r}^2+3y_sy_{s1})(y_s\psi_s+1)\epsilon(\bi) & \text{if $i_r=1, i_s=0, e=2$},\\
(y_{s1}\psi_r\psi_s\psi_r-3y_{r1}\psi_s\psi_r\psi_s-y_r^2+3y_s^2)\epsilon(\bi) & \text{if $i_r=i_s=1, e=2$},\\
-\psi_r\psi_s\psi_r\epsilon(\bi) & \text{if $i_r=0,i_s=1, e=3$},\\
\psi_r\psi_s\psi_r\epsilon(\bi) & \text{if $i_r=0,i_s=-1, e=3$},\\
0 & \text{else}.
\end{cases}
\end{align*}
	
To finish the proof of the theorem, we need to prove the relations (1)-(10) generate all relations. In fact, for each $w\in \mcW$, we {\it fix} a reduced decomposition $w=\sigma_{r_1} \sigma_{r_2}\cdots \sigma_{r_m}$ and define the element
$$\psi_w:=\psi_{r_1}\psi_{r_2}\cdots \psi_{r_m}\in \mathcal{L}.$$
Note that $\psi_w$ in general does depend on the choice of reduced decomposition of $w$ \cite[Proposition 2.5]{BKW}. Using the decomposition (\ref{de Lusztig theta_w}), $\mathcal{L}$ has a $\Bbbk(y)\otimes_\Bbbk\mathcal{E}$-basis $\{\theta_w\ | \ w\in \mcW\}$. By (\ref{depsitheta}), we know that $\{\psi_w \ | \ w\in \mcW\}$ is also a basis of $\mathcal{L}$ as $\Bbbk(y)\otimes_\Bbbk\mathcal{E}$-module. Thus the relations (1)-(9) is complete since by them every element in $\mathcal{L}$ can be written as $\sum_{w\in \mcW,\bi\in \mcC}\psi_wf_{w,\bi}(y)g^{-1}_{w,\bi}(y)\epsilon(\bi)$ with $f_{w,\bi}(y),g_{w,\bi}(y)\in \Bbbk[y]$ and $g_{w,\bi}(y)\neq 0$.
\end{proof}

\subsection{The BK subalgebras}
Our method of constructing the KLR generators of the Lusztig extension follows Brundan and Kleshchev. With these KLR generators in hand, we can fast obtain a class of BK-type isomorhism. In the process, the key point is the following subalgebra of the Lusztig extension using its KLR form. We think it is suitable to call it Brundan Kleshchev subalgebra (or BK subalgebra for short).

We define the {\it BK subalgebra $\tilde{\mathcal{L}}$} as the $\Bbbk$-algebra generated by $$\{y_1,\cdots,y_n, \psi_1,\cdots,\psi_n, f^{-1}(y),\epsilon(\bi) \ |\ \bi\in \mcC, f(y)\in \Bbbk[y] \ \mbox{with} \ f(0)\neq 0\}$$
subject to the relations (1)-(10) of Theorem \ref{thm:deKLR basis}. It is a subalgebra of $\mathcal{L}$ as shown in the following corollary.
\begin{corollary}\label{cor:basis deBK algebra}  Denote by $1\epsilon(\bi)=\epsilon(\bi)$. Then the algebra $\tilde{\mathcal{L}}$ is generated by $$\{x_1,\cdots,x_n, t_1,\cdots,t_{n}, f^{-1}(x)\epsilon(\bi) \ |\ \bi\in \mcC, f(x)\in \Bbbk[x] \ \mbox{with} \ f(\bi)\neq 0\}$$
	subject to relations (\ref{deaffhecke x})-(\ref{detttttt}), (\ref{deepsilon}), (\ref{dexepis}), (\ref{tepsi}) and for $f\in \Bbbk[X]$ with $f(\bi)\neq 0$
	\begin{equation}\label{def(x)f^{-1}(x)}
	\epsilon(\bj)\cdot f^{-1}\epsilon(\bi)=\delta_{\bi}^{\bj}f^{-1}\epsilon(\bi)=f^{-1}\epsilon(\bi)\cdot \epsilon(\bj), \ f\cdot f^{-1}\epsilon(\bi)=\epsilon(\bi)=f^{-1}\epsilon(\bi)\cdot f.
	\end{equation}
\end{corollary}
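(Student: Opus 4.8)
The plan is to exhibit an explicit, mutually inverse change of generators between the two presentations, and to check that each carries the defining relations of one presentation into consequences of the other. Write $A$ for the abstract $\Bbbk$-algebra presented by $\{x_1,\dots,x_n,t_1,\dots,t_n,f^{-1}(x)\epsilon(\bi)\mid f(\bi)\neq0\}$ subject to the listed relations; since these elements already live in $\mathcal{L}$ and the affine Hecke and idempotent relations hold there, there is a tautological surjection $A\twoheadrightarrow\tilde{\mathcal{L}}$, and the real content is its injectivity. First I would record the two substitutions. From (\ref{yx}) one has $y_r=\sum_{\bi\in\mcC}(x_r-i_r)\epsilon(\bi)$ and, inverting inside each block, $x_r=\sum_{\bi\in\mcC}(y_r+i_r)\epsilon(\bi)$, so $x_r$ and $y_r$ are mutually expressible through the $\epsilon(\bi)$. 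For the localized inverses, the shift $y_r\mapsto y_r+i_r$ in the block $\epsilon(\bi)$ identifies $f(x)\epsilon(\bi)$ with $g(y)\epsilon(\bi)$, where $g(y)=f(y_1+i_1,\dots,y_n+i_n)$; crucially $f(\bi)\neq0$ is \emph{exactly} the condition $g(0)\neq0$, so by uniqueness of two-sided inverses in the corner $\epsilon(\bi)\mathcal{L}\epsilon(\bi)$ the generator $f^{-1}(x)\epsilon(\bi)$ matches the allowed inverse $g^{-1}(y)\epsilon(\bi)$. Thus the polynomial and localized-inverse generators of the two presentations agree.

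The delicate generators are $t_r$ and $\psi_r$. Using $\phi_r=t_r+x_r^{-1}$, the identity $\phi_r=\theta_r\sum_{\bj}q_r(\bj)\epsilon(\bj)$, and (\ref{depsitheta}), a block computation gives, for $i_r\neq0$,
\[
\psi_r\epsilon(\bi)=(t_r+x_r^{-1})q_r^{-1}(\bi)\epsilon(\bi),
\]
where $x_r^{-1}\epsilon(\bi)=(y_r+i_r)^{-1}\epsilon(\bi)$ and $q_r^{-1}(\bi)\epsilon(\bi)$ are allowed inverses, since their denominators are non-vanishing at $y=0$ (a case check against the formula for $q_r(\bi)$); solving for $t_r$ shows $t_r\epsilon(\bi)\in\tilde{\mathcal{L}}$ and $\psi_r\epsilon(\bi)$ lies in the algebra generated by the new set. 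For $i_r=0$ the naive expression contains $x_r^{-1}\epsilon(\bi)=y_r^{-1}\epsilon(\bi)$, which is \emph{not} an allowed inverse; the point is that the $y_r^{-1}$ coming from $\phi_r$ and the $y_r^{-1}$ subtracted in (\ref{depsitheta}) cancel, leaving
\[
t_r\epsilon(\bi)=\psi_r(1-y_r)\epsilon(\bi)-\epsilon(\bi),\qquad \psi_r\epsilon(\bi)=(t_r+1)(1-x_r)^{-1}\epsilon(\bi),
\]
and $(1-x_r)^{-1}\epsilon(\bi)$ is legitimate because $1-i_r=1\neq0$. Summing over $\bi$ then shows the two generating sets span the same subalgebra of $\mathcal{L}$, so that $A\twoheadrightarrow\tilde{\mathcal{L}}$ is indeed onto the intended subalgebra. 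Verifying the relations match is then bookkeeping: (\ref{deaffhecke x})--(\ref{detttttt}) hold because $x_r,t_r$ generate a copy of $\mcH$ inside $\mcH_F\subseteq\mathcal{L}$, the relations (\ref{deepsilon}), (\ref{dexepis}), (\ref{tepsi}) and (\ref{def(x)f^{-1}(x)}) hold by construction of $\mathcal{L}$ and of the localized inverses, and substituting the formulas above back into relations (1)--(10) of Theorem \ref{thm:deKLR basis} recovers them, since those derivations used only the affine Hecke relations, the idempotent/commutation rules, and inverses whose denominators survive at $y=0$.

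I expect the main obstacle to be the completeness claim, namely that the listed relations are not merely valid but \emph{sufficient}, i.e. that $A\to\tilde{\mathcal{L}}$ is injective; and the subtle point feeding into it is exactly that the change of variables must stay inside the restricted localization, which is guaranteed by the $i_r=0$ cancellations above. For injectivity I would run the normal-form argument from the end of the proof of Theorem \ref{thm:deKLR basis}: using the straightening rules (\ref{detrxs}) and (\ref{tepsi}) to move every $t_r$ to the left and the idempotent relations to absorb the $\epsilon(\bi)$, each element of $A$ becomes a $\Bbbk$-combination of terms $t_w\,f^{-1}(x)\epsilon(\bi)$ with $f(\bi)\neq0$. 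Matching these against the basis $\{\theta_w\epsilon(\bi)\}$ of $\mathcal{L}$ over $\Bbbk(y)$ from (\ref{de Lusztig theta_w}) — equivalently the basis $\{\psi_w\epsilon(\bi)\}$ — and noting that the coefficients now range over $\Bbbk[y]_{(y)}$, the localization of $\Bbbk[y]$ at the maximal ideal $(y_1,\dots,y_n)$, shows that this spanning set descends to a genuine basis of $\tilde{\mathcal{L}}$ over that local ring. Hence the spanning set of $A$ is linearly independent, the surjection is an isomorphism, and $\tilde{\mathcal{L}}$ is realized as the asserted subalgebra of $\mathcal{L}$.
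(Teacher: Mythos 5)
Your proof is correct and follows essentially the same route as the paper: both presentations are embedded into $\mathcal{L}$ via their normal forms ($\sum_{w,\bi}\psi_w f g^{-1}\epsilon(\bi)$ with $g(0)\neq0$, resp.\ $\sum_{w,\bi}t_w f g^{-1}\epsilon(\bi)$ with $g(\bi)\neq0$), and the two images are identified using the block-wise conversion between $t_r$ and $\psi_r$, with the same crucial cancellation of $y_r^{-1}$ in the $i_r=0$ blocks that keeps the change of generators inside the restricted localization $\mathcal{P}(x,\mathcal{E})=\mathcal{P}(y,\mathcal{E})$. Your explicit formulas $\psi_r\epsilon(\bi)=(t_r+1)(1-x_r)^{-1}\epsilon(\bi)$ for $i_r=0$ and the inverse expression for $t_r\epsilon(\bi)$ are exactly the identity the paper records (and partly leaves implicit) in its proof.
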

\begin{proof} There is an obvious homomorphism $\alpha\colon \tilde{\mathcal{L}}\to \mathcal{L}$ by sending generators to the same named generators. This homomorphism is injective since using relations (1)-(9) of Theorem \ref{thm:deKLR basis}, every element in $\tilde{\mathcal{L}}$ can be written as
	$$\sum_{w\in \mcW,\bi\in \mcC}\psi_wf_{w,\bi}(y)\cdot g_{w,\bi}^{-1}(y)\epsilon(\bi)$$ with $f_{w,\bi}(y), g_{w,\bi}(y)$ in $\Bbbk[y]$ and $g_{w,\bi}(0)\neq 0$, and $\{\psi_w\ | \ w\in \mcW\}$ is a $\Bbbk(y)\otimes_\Bbbk\mathcal{E}$-basis of $\mathcal{L}$. Thus $\mathrm{Im}\alpha=\oplus_{w\in \mcW}\psi_w\mathcal{P}(y,\mathcal{E})$, where $\mathcal{P}(y,\mathcal{E})$ is the commutative algebra $\{fg^{-1}\ | \ f,g\in\Bbbk[y], g(0)\neq0\}\otimes_\Bbbk\mathcal{E}$.
		
	Assume $\mathcal{M}$ is the $\Bbbk$-algebra generated by 	$$\{x_1,\cdots,x_n, t_1,\cdots,t_{n}, f^{-1}(x)\epsilon(\bi) \ |\ \bi\in \mcC, f(x)\in \Bbbk[x] \ \mbox{with} \ f(\bi)\neq 0\}$$
	subject to relations (\ref{deaffhecke x})-(\ref{detttttt}), (\ref{deepsilon}), (\ref{dexepis}), (\ref{tepsi}) and (\ref{def(x)f^{-1}(x)}). Then, using these relations, every element in $\mathcal{M}$ can be written as $$\sum_{w\in \mcW,\bi\in \mcC}T_wf_{w,\bi(x)}\cdot g_{w,\bi}^{-1}(x)\epsilon(\bi)$$ with $f_{w,\bi}(x), g_{w,\bi}(x)$ in $\Bbbk[x]$ and $g_{w,\bi}(\bi)\neq 0$. Thus there also has an injective homomorphism $\alpha'\colon \mathcal{M}\to \mathcal{L}$ by sending generators to the same named generators. So $\mathrm{Im}\alpha'=\oplus_{w\in \mcW}T_w\mathcal{P}(x,\mathcal{E})$, where $\mathcal{P}(x,\mathcal{E})$ is the commutative algebra $\{f\cdot g^{-1}\epsilon(\bi) \ | \  \bi\in \mcC,f,g\in \Bbbk[x], g(\bi)\neq 0\}$.
	
	We claim that $\mathrm{Im}\alpha=\mathrm{Im}\alpha'$. In fact, by Lemma \ref{lem:K(x)=K(y)}, $\mathcal{P}(x,\mathcal{E})=\mathcal{P}(y,\mathcal{E})$ in $\mathcal{L}$. Notice that
	\begin{align*}\psi_r=\sum\limits_{\begin{subarray}{1}\bi\in\mcC \\
		i_r\neq 0\end{subarray}}(t_r+\tfrac{1}{y_r+i_r})q_r^{-1}(\bi)\epsilon(\bi)+\sum\limits_{\begin{subarray}{1}\bi\in\mcC \\
		i_r=0\end{subarray}}(t_r+1)q^{-1}_r(\bi)\epsilon(\bi).\end{align*}
	and $q_r(\bi), q_r^{-1}(\bi)\in \mathcal{P}(y,\mathcal{E})$ in $\mathcal{L}$, thus we get $\mathrm{Im}\alpha=\mathrm{Im}\alpha'$ and then  $\mathcal{M}\cong \tilde{\mathcal{L}}$.
\end{proof}

Denote by $$\tilde{\mathcal{L}}(\Lambda):=\tilde{\mathcal{L}}/\langle y_1^{\Lambda_{i_1}}\epsilon(\bi) | \bi\in \mcC \rangle.$$
We use the same letters $\psi_1,\cdots,\psi_n$ and $y_1,\cdots,y_n$ to denote the images of the generators in $\tilde{\mathcal{L}}(\Lambda)$.

\begin{lemma} \label{lem:cdesemiration} 	$\tilde{\mathcal{L}}(\Lambda)=\tilde{\mathcal{L}}/\langle \prod_{i\in I}(x_1-i)^{\Lambda_i}\rangle.$
\end{lemma}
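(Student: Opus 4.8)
The plan is to show that the two defining relations generate the \emph{same} two-sided ideal of $\tilde{\mathcal{L}}$, so that the two quotients coincide on the nose. Write $J_1=\langle y_1^{\Lambda_{i_1}}\epsilon(\bi)\mid \bi\in\mcC\rangle$ and $J_2=\langle \prod_{i\in I}(x_1-i)^{\Lambda_i}\rangle$; the claim is $J_1=J_2$. The bridge between the two descriptions is the defining identity $y_1=\sum_{\bj\in\mcC}(x_1-j_1)\epsilon(\bj)$ of (\ref{yx}), which together with the orthogonality of the $\epsilon(\bj)$ and the fact that $x_1$ commutes with every $\epsilon(\bj)$ (see (\ref{dexepis})) gives $y_1\epsilon(\bi)=(x_1-i_1)\epsilon(\bi)$ and hence $y_1^{k}\epsilon(\bi)=(x_1-i_1)^{k}\epsilon(\bi)$ for all $k\ge 0$. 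Note also that the product defining $J_2$ is finite because $\sum_{i\in I}\Lambda_i<\infty$.

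First I would carry out the central computation, localised at each idempotent. Since $\sum_{\bi\in\mcC}\epsilon(\bi)=1$ and each $\epsilon(\bi)$ commutes with $\prod_{i\in I}(x_1-i)^{\Lambda_i}$, we have $\prod_{i\in I}(x_1-i)^{\Lambda_i}=\sum_{\bi\in\mcC}\prod_{i\in I}(x_1-i)^{\Lambda_i}\epsilon(\bi)$. For a fixed $\bi$, rewriting each factor as $(x_1-i)\epsilon(\bi)=(y_1+(i_1-i))\epsilon(\bi)$ and isolating the factor $i=i_1$ from the rest yields
\begin{equation*}
\prod_{i\in I}(x_1-i)^{\Lambda_i}\epsilon(\bi)=y_1^{\Lambda_{i_1}}\,g_{\bi}(y_1)\,\epsilon(\bi),\qquad g_{\bi}(y_1):=\prod_{i\neq i_1}(y_1+(i_1-i))^{\Lambda_i}.
\end{equation*}
The point is that $g_{\bi}(0)=\prod_{i\neq i_1}(i_1-i)^{\Lambda_i}\neq 0$, since for $i\neq i_1$ the scalar $i_1-i$ is nonzero in $\Bbbk$ under the embedding $I\hookrightarrow\Bbbk$. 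Consequently $g_{\bi}(y_1)$ is one of the distinguished units of $\tilde{\mathcal{L}}$, with inverse $g_{\bi}^{-1}(y_1)\in\tilde{\mathcal{L}}$.

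Both inclusions then follow formally. For $J_2\subseteq J_1$, each summand $\prod_{i\in I}(x_1-i)^{\Lambda_i}\epsilon(\bi)=y_1^{\Lambda_{i_1}}\epsilon(\bi)\,g_{\bi}(y_1)$ (using that $\epsilon(\bi)$ commutes with the polynomial $g_{\bi}(y_1)$) lies in $J_1$ as a right multiple of the generator $y_1^{\Lambda_{i_1}}\epsilon(\bi)$, and summing over $\bi$ puts $\prod_{i\in I}(x_1-i)^{\Lambda_i}$ into $J_1$. For $J_1\subseteq J_2$, the element $\prod_{i\in I}(x_1-i)^{\Lambda_i}\epsilon(\bi)=\epsilon(\bi)\prod_{i\in I}(x_1-i)^{\Lambda_i}$ lies in $J_2$, and multiplying it on the right by the unit $g_{\bi}^{-1}(y_1)$ recovers $y_1^{\Lambda_{i_1}}\epsilon(\bi)$, so every generator of $J_1$ lies in $J_2$. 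Hence $J_1=J_2$ and the two quotients agree. The only genuinely substantive point---and the step I would flag as the crux---is the invertibility of $g_{\bi}(y_1)$, i.e. that the ``extra'' linear factors $(x_1-i)$ with $i\neq i_1$ become units after passing to $\epsilon(\bi)$; everything else is bookkeeping with the orthogonal idempotents.
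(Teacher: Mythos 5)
Your proof is correct and follows essentially the same route as the paper: decompose $\prod_{i\in I}(x_1-i)^{\Lambda_i}$ over the idempotents, pull out the factor $y_1^{\Lambda_{i_1}}$, and observe that the remaining factor is a unit because its constant term $\prod_{i\neq i_1}(i_1-i)^{\Lambda_i}$ is nonzero. The only cosmetic difference is that you invoke the invertibility of $g_{\bi}(y_1)$ directly from the $y$-presentation of $\tilde{\mathcal{L}}$ (the generators $f^{-1}$ with $f(0)\neq 0$), whereas the paper routes the same fact through the $x$-presentation of Corollary \ref{cor:basis deBK algebra}.
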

\begin{proof} By the relations of $y_1$ and $x_1$, there holds that \begin{align*}\prod_{i\in I}(x_1-i)^{\Lambda_i}&=\sum_{\bj\in \mcC}\prod_{i\in I}(y_1+j_1-i)^{\Lambda_i}\epsilon(\bj)\\
	&=\sum_{\bj\in \mcC}\prod_{i\in I,i\neq j_1}(y_1+j_1-i)^{\Lambda_i}y_1^{\Lambda_{j_1}}\epsilon(\bj)
	\end{align*} is in $\langle y_1^{\Lambda_{j_1}}\epsilon(\bj)\ | \ \bj\in \mcC\rangle$. By Corollary (\ref{cor:basis deBK algebra}), $\prod_{i\in I, i\neq j_1}[(x_1-i)^{\Lambda_i}]^{-1}\epsilon(\bj)$ is in $\tilde{\mathcal{L}}$, thus \begin{align*}y_1^{\Lambda_{j_1}}\epsilon(\bj)&=(x_1-j_1)^{\Lambda_{j_1}}\epsilon(\bj)\\
	&=\prod_{i\in I}(x_1-i)^{\Lambda_i}\prod_{i\in I, i\neq j_1}[(x_1-i)^{\Lambda_i}]^{-1}\epsilon(\bj)
	\end{align*}
	is in $\langle \prod_{i\in I}(x_1-i)^{\Lambda_i}\rangle$. Therefore $\langle y_1^{\Lambda_{i_1}}\epsilon(\bi)\ | \ \bi\in \mcC\rangle =\langle \prod_{i\in I}(x_1-i)^{\Lambda_i}\rangle$ in $\tilde{\mathcal{L}}.$ Thus $\tilde{\mathcal{L}}(\Lambda)=\tilde{\mathcal{L}}/\langle \prod_{i\in I}(x_1-i)^{\Lambda_i}\rangle.$
\end{proof}
\begin{remark}
	Similarly to the proof of \cite[Lemma 2.1]{Brundan-Kleshchev09}, one can deduce that the elements $y_r$ are nilpotent in $\tilde{\mathcal{L}}(\Lambda)$. Then by imitating the proof above, we know that the elements $\prod_{i\in I}(x_r-i)$ are also nilpotent in $\tilde{\mathcal{L}}(\Lambda)$.
\end{remark}

\subsection{Cyclotomic degenerate affine Hecke algebras}

We define the {\itshape cyclotomic degenerate  affine Hecke algebra $\mcH(\Lambda)$} as
$$\mcH(\Lambda):=\mcH/\langle\prod_{i\in I}(x_1-i)^{\Lambda_i}\rangle.$$
Similar to \cite[Subsection 3.1]{Brundan-Kleshchev09}, there is a system $\{e(\bi) \ | \ \bi\in \mcC\}$ of mutually orthogonal idempotents in $\mcH(\Lambda)$ such that $1=\sum_{\bi\in I^n}e(\bi)$ and \begin{align*}e(\bi)\mcH(\Lambda)&=\{h\in \mcH(\Lambda) \ | \ (x_r-i_r)^mh=0 \ \mbox{for all} \ r\in [n] \ \mbox{and} \ m\gg0\}.\end{align*}
It is easy to check that $x_re(\bi)=e(\bi)x_r$ for all $r\in [n]$ and $\bi\in \mcC$, and for a polynomial $f(x)\in \Bbbk[x]$, $f(x)e(\bi)$ is a unit if and only if $f(\bi)\neq 0$. In particular, the element $x_re(\bi)$ is a unit in $e(\bi)\mcH(\Lambda)$ if and only if $i_r\neq 0$. In this case, we write $x_r^{-1}e(\bi)$ for the inverse.

\begin{lemma} \label{lem:degenerate} The following relations hold for all $r\in [n]$ and $\bi\in I^{n}$.
	\begin{equation} \label{detre-etr} t_re(\bi)=\begin{cases}
	e(\bi)t_r & \text{if $i_r=0$},\\
	e(\sigma_r(\bi))t_r+
	x_r^{-1}e(\sigma_r(\bi))-x_r^{-1}e(\bi)  & \text{if $i_r\neq 0$}.
	\end{cases}
	\end{equation}
\end{lemma}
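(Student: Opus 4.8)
The plan is to determine $t_re(\bi)$ by resolving it along the orthogonal idempotents, writing $t_re(\bi)=\sum_{\bj}e(\bj)t_re(\bi)$, deciding which summands survive, and computing their coefficients. The only inputs are the commutation relation \eqref{detrxs}, $t_rx_s=\sigma_r(x_s)t_r+\partial_r(x_s)$ (which by the Leibniz rule holds with $x_s$ replaced by any $f\in\Bbbk[x]$, cf.\ \eqref{deft}), its mirror $x_st_r=t_r\sigma_r(x_s)+\partial_r(x_s)$ (apply \eqref{deft} to $f=\sigma_r(x_s)$ and use $\partial_r\sigma_r=-\partial_r$), and the two spectral facts recalled before the lemma: on $e(\bi)\mcH(\Lambda)$ the operator $x_s$ has generalized eigenvalue $i_s$, so $\sigma_r(x_s)e(\bi)=(x_s-a_{sr}x_r)e(\bi)$ has generalized eigenvalue $\sigma_r(\bi)_s$, while $\partial_r(x_s)=-a_{sr}$ is a scalar.

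First I would prove a support lemma: for $\bj\neq\bi$ one has $e(\bj)t_re(\bi)=0$ unless $\bj=\sigma_r(\bi)$, and symmetrically $e(\bi)t_re(\bj)=0$ unless $\bj=\sigma_r(\bi)$. For the left version set $m=e(\bj)t_re(\bi)$; using the mirror relation and $e(\bj)e(\bi)=0$ the scalar term $\partial_r(x_s)$ drops out, and an easy induction gives $(x_s-\sigma_r(\bi)_s)^N m=e(\bj)t_r\,(\sigma_r(x_s)-\sigma_r(\bi)_s)^N e(\bi)$, which vanishes for $N\gg0$ by nilpotency of $(\sigma_r(x_s)-\sigma_r(\bi)_s)e(\bi)$. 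As $m$ also lies in $e(\bj)\mcH(\Lambda)$, where $x_s$ has generalized eigenvalue $j_s$, coprimality of $(x_s-j_s)$ and $(x_s-\sigma_r(\bi)_s)$ forces $j_s=\sigma_r(\bi)_s$ for all $s$ whenever $m\neq0$, i.e.\ $\bj=\sigma_r(\bi)$; the right version is symmetric, using \eqref{detrxs} in place of its mirror. In the branch $i_r=0$ we have $\sigma_r(\bi)=\bi$, so the support lemma alone collapses everything to the diagonal: $t_re(\bi)=e(\bi)t_re(\bi)=e(\bi)t_r$, which is the first case of the lemma.

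For $i_r\neq0$ the remaining issue is the scalar correction $\partial_r(x_s)$, which keeps a nonzero component of $t_re(\bi)$ inside $e(\bi)\mcH(\Lambda)$ and must be evaluated exactly. The clean device is the intertwiner $\phi_r=t_r+x_r^{-1}$: since $i_r\neq0$ (whence also $\sigma_r(\bi)_r=-i_r\neq0$), both $\phi_re(\bi):=(t_r+x_r^{-1})e(\bi)$ and $e(\sigma_r(\bi))\phi_r:=e(\sigma_r(\bi))(t_r+x_r^{-1})$ are genuine elements of $\mcH(\Lambda)$. A direct check shows $\phi_re(\bi)$ conjugates the polynomial ring correction-free, $\phi_re(\bi)\,x_s=\sigma_r(x_s)\,\phi_re(\bi)$, the $\partial_r(x_s)$ and $x_r^{-1}$ terms canceling precisely because $x_s-\sigma_r(x_s)=a_{sr}x_r$ and $\partial_r(x_s)=-a_{sr}$ (consistently with \eqref{dephig}); this is exactly where $i_r\neq0$ enters, and it is characteristic-free. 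Feeding this clean conjugation into the same nilpotency argument shows $\phi_re(\bi)$ is left-supported on $\sigma_r(\bi)$ and, symmetrically, $e(\sigma_r(\bi))\phi_r$ is right-supported on $\bi$; hence $\phi_re(\bi)=e(\sigma_r(\bi))\phi_re(\bi)=e(\sigma_r(\bi))\phi_r$, the exact analog in $\mcH(\Lambda)$ of \eqref{dephiepsi}. Expanding $\phi_r=t_r+x_r^{-1}$ and rearranging yields $t_re(\bi)=e(\sigma_r(\bi))t_r+x_r^{-1}e(\sigma_r(\bi))-x_r^{-1}e(\bi)$, the second case.

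The main obstacle is the scalar term $\partial_r(x_s)$ in \eqref{detrxs}: it prevents $t_r$ from merely permuting the generalized eigenspaces, producing the extra $-x_r^{-1}e(\bi)$ term and making a direct computation of the diagonal coefficient awkward (a naive attempt to solve $e(\bi)t_re(\bi)\,x_r+x_r\,e(\bi)t_re(\bi)=-2e(\bi)$ is not obviously unique in characteristic $2$). Routing everything through $\phi_r$, whose conjugation of $\Bbbk[x]$ carries no correction, both removes this difficulty and explains the necessity of the case split, since $\phi_re(\bi)$ exists exactly when $x_r^{-1}e(\bi)$ does, i.e.\ when $i_r\neq0$.
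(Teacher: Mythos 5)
Your proof is correct and takes essentially the same route as the paper: both cases rest on pushing powers of $(x_s-\,\cdot\,)$ through $t_r$ via \eqref{deft} and invoking nilpotency on the generalized eigenspaces, and for $i_r\neq 0$ the paper works with the polynomial multiple $t_rx_r+1=\phi_rx_r$ of your intertwiner before right-multiplying by $x_r^{-1}e(\bi)$ at the end, which is the same device as your use of $\phi_re(\bi)$. Your support lemma and the coprimality argument are a mild repackaging of the paper's direct computation.
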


\begin{proof}   For any $s\in [n]$, the element $$(\sigma_r(x_s)-\sigma(\bi)_s)e(\bi)=[(x_s-i_s)-a_{sr}(x_r-i_r)]e(\bi)$$ is nilpotent by the nilpotency of $(x_s-i_s)e(\bi)$ and $(x_r-i_r)e(\bi)$. Similarly, we can show that
		$\parti_r((x_s-i_s)^m)e(\bi)$ is nilpotent by the binomial theorem for an integer $m\gg0$ whenever $i_r=0$. Therefore, if $i_r=0$, by (\ref{deft}), we have
		\begin{align*}(x_s-i_s)^mt_re(\bi)=t_r(\sigma_r(x_s)-i_s)^me(\bi)+{\parti}_r((x_s-i_s)^m)e(\bi)=0\end{align*}
		when $m\gg0$. Hence $t_re(\bi)\in e(\bi)\mcH(\Lambda)$ and then $t_re(\bi)=e(\bi)t_re(\bi)=e(\bi)t_r.$
		
		If $i_{r}\neq 0$, by (\ref{deft}), we get that
		\begin{align*}(x_s-\sigma_r(\bi)_s)^m[t_rx_r+1]e(\bi)=[t_rx_r+1](\sigma_r(x_s)-\sigma_r(\bi)_s)^me(\bi)=0\end{align*}
		when $m\gg0$. Therefore
		\begin{align*}t_rx_re(\bi)+e(\bi)=e(\sigma_r(\bi))[t_rx_re(\bi)+e(\bi)]=e(\sigma(\bi))t_rx_{r}e(\bi).
		\end{align*}
		Then right-multiplying by $x_r^{-1}e(\bi)$, we obtain that $$t_re(\bi)=e(\sigma_r(\bi))t_re(\bi)-
		x_r^{-1}e(\bi).$$
		Similarly, we deduce that
		$$e(\sigma_r(\bi))t_r=e(\sigma_r(\bi))t_re(\bi)-
		x_r^{-1}e(\sigma_r(\bi)).$$
		Therefore $t_re(\bi)=e(\sigma_r(\bi))t_r+
		x_r^{-1}e(\sigma_r(\bi))-x_r^{-1}e(\bi).$
	\end{proof}

Let $$e(\mcC):=\sum_{\bi\in \mcC}e(\bi)\in \mcH(\Lambda).$$
Then $e(\mcC)$ is a central idempotent in $\mcH(\Lambda)$ by (\ref{detre-etr}). Furthermore, by Lemma \ref{lem:cdesemiration}, Corollary \ref{cor:basis deBK algebra} and Lemma \ref{lem:degenerate}, there is a homomorphism
$$\rho \colon \tilde{\mathcal{L}}(\Lambda) \to \mcH(\Lambda)e(\mcC)$$ sending the generators $x_r, t_r$ to the same named elements, and $f(x)^{-1}\epsilon(\bi)$ with $f(\bi)\neq 0$ to $f^{-1}(x)e(\bi)$. Now we arrive at our first main result in this article for degenerate affine Hecke algebras.
\begin{theorem}\label{thm:derho}There is an algebra isomorphism $\mcH(\Lambda)e(\mcC)\cong \tilde{\mathcal{L}}(\Lambda)$.
\end{theorem}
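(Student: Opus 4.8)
The map $\rho$ has already been constructed before the statement, so what remains is to produce a two-sided inverse; this simultaneously yields injectivity and surjectivity. The plan is to build the obvious map in the opposite direction and show it descends to the block cut out by $e(\mcC)$.

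First I would define a homomorphism $\sigma\colon\mcH\to\tilde{\mathcal{L}}(\Lambda)$ by $x_r\mapsto x_r$ and $t_r\mapsto t_r$. This is well defined because, by Corollary \ref{cor:basis deBK algebra}, the elements $x_r,t_r$ already lie in $\tilde{\mathcal{L}}\subseteq\mathcal{L}$, and there they satisfy the degenerate affine Hecke relations (\ref{deaffhecke x})--(\ref{detttttt}); these therefore persist in the quotient $\tilde{\mathcal{L}}(\Lambda)$. By Lemma \ref{lem:cdesemiration} the cyclotomic element $\prod_{i\in I}(x_1-i)^{\Lambda_i}$ vanishes in $\tilde{\mathcal{L}}(\Lambda)$, so $\sigma$ factors through $\mcH(\Lambda)$.

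The crucial step is to show that $\sigma$ annihilates $1-e(\mcC)$, equivalently that $\sigma(e(\bj))=\epsilon(\bj)$ when $\bj\in\mcC$ and $\sigma(e(\bj))=0$ otherwise. Here I would use that each $e(\bj)$ is the joint spectral projection of the commuting operators $x_1,\dots,x_n$ onto the generalized eigenvalue $\bj$; since each $x_r$ satisfies a polynomial equation (the $\prod_{i\in I}(x_r-i)$ being nilpotent, as in \cite[Lemma 2.1]{Brundan-Kleshchev09}), this projection is a polynomial in $x_1,\dots,x_n$. Consequently $\sigma(e(\bj))$ is the same polynomial in the $x_r$ of $\tilde{\mathcal{L}}(\Lambda)$ and in particular commutes with every $\epsilon(\bi)$. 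On the block $\epsilon(\bi)\tilde{\mathcal{L}}(\Lambda)$ one has $x_r=y_r+i_r$ with $y_r$ nilpotent, by the Remark following Lemma \ref{lem:cdesemiration}, so $x_r$ has the single generalized eigenvalue $i_r$ there and its spectral projection onto $j_r$ acts as $\delta_{i_r,j_r}$. Thus $\sigma(e(\bj))\epsilon(\bi)=\delta_{\bi,\bj}\epsilon(\bi)$, and summing over $\bi\in\mcC$ gives the claim. In particular $\sigma(e(\mcC))=\sum_{\bj\in\mcC}\epsilon(\bj)=1$, so $\sigma$ induces a homomorphism $\bar{\sigma}\colon\mcH(\Lambda)e(\mcC)\to\tilde{\mathcal{L}}(\Lambda)$.

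Finally I would verify that $\bar{\sigma}$ is inverse to $\rho$ by evaluating on generators. For $\bar{\sigma}\circ\rho$ I use the generating set $\{x_r,t_r,f^{-1}(x)\epsilon(\bi)\}$ of $\tilde{\mathcal{L}}(\Lambda)$ from Corollary \ref{cor:basis deBK algebra}: both maps fix $x_r$ and $t_r$, while $f^{-1}(x)\epsilon(\bi)\mapsto f^{-1}(x)e(\bi)\mapsto f^{-1}(x)\epsilon(\bi)$, the last step because $f(x)\epsilon(\bi)$ has nonzero constant term $f(\bi)$ and is hence invertible in $\tilde{\mathcal{L}}(\Lambda)$. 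For $\rho\circ\bar{\sigma}$ I use that $\mcH(\Lambda)e(\mcC)$ is generated by $x_re(\mcC)$, $t_re(\mcC)$, the $e(\bi)$ and the local inverses, on which the composite is again the identity (for instance $e(\bi)\mapsto\epsilon(\bi)\mapsto e(\bi)$). Hence $\rho$ is an isomorphism. The main obstacle is precisely the spectral identity $\sigma(e(\bj))=\epsilon(\bj)$; everything else is formal, and its proof rests on the nilpotency of the $y_r$ in the cyclotomic quotient, which forces $x_r$ to have a single generalized eigenvalue on each idempotent block.
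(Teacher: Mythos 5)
Your proposal is correct and follows essentially the same route as the paper: the paper also constructs the reverse map $\tau\colon\mcH(\Lambda)\to\tilde{\mathcal{L}}(\Lambda)$ on the Hecke generators via Lemma \ref{lem:cdesemiration}, proves $\tau(e(\bj))=\delta_{\bj\in\mcC}\,\epsilon(\bj)$, and checks inverse relations on generators. The only cosmetic difference is in how the idempotent identity is verified: the paper multiplies $(x_r-j_r)^m e(\bj)=0$ by the local inverse $(x_r-j_r)^{-m}\epsilon(\bi)$ available when $i_r\neq j_r$, whereas you phrase the same nilpotency fact in terms of spectral projections being polynomials in the $x_r$.
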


\begin{proof}  Apparently, $\rho$ is surjective, thus we only need to construct a left-inverse of $\rho$. By lemma \ref{lem:cdesemiration}, there is a homomorphism
	$$\tau \colon \mcH(\Lambda) \to \tilde{\mathcal{L}}(\Lambda)$$
	sending the generators $x_r,t_r$ to the same named elements. Let $\bi\in \mcC $ and $\bj\in I^n$. If $\bi\neq \bj$, then there is some $1\leq r\leq n$ such that $j_r\neq i_r$. We claim that $\epsilon(\bi)\tau(e(\bj))=0$. In fact, by the construction of $e(\bj)$, there is an integer $m\gg0$ such that $(x_r-j_r)^me(\bj)=0$. So we have $$(x_r-j_r)^m\epsilon(\bi)\tau(e(\bj))=\epsilon(\bi)\tau((x_r-j_r)^me(\bj))=0.$$
	The assumption $j_r\neq i_r$ implies that the element $(x_r-j_r)^{-1}\epsilon(\bi)\in \tilde{\mathcal{L}}(\Lambda)$. Thus we deduce that
	$$\epsilon(\bi)\tau(e(\bj))=(x_r-j_r)^{-m}(x_r-j_r)^m\epsilon(\bi)\tau(e(\bj))=(x_r-j_r)^{-m}\epsilon(\bi)0=0.$$
	Therefore, if $\bj\in I^n\setminus \mcC$ we have that $\tau(e(\bj))=\sum_{\bi\in \mcC}\epsilon(\bi)\tau(e(\bj))=0.$ Moreover, if $\bj\in \mcC$, we get that
	$$\tau(e(\bj))=\sum_{\bi\in \mcC}\epsilon(\bi)\tau(e(\bj))=\epsilon(\bj)\tau(e(\bj))=\epsilon(\bj)\sum_{\bi\in I^n}\tau(e(\bi))=\epsilon(\bj)\tau(1)=\epsilon(\bj).$$
	These show that $\tau|_{ \mcH(\Lambda)e(\mcC)}\colon \mcH(\Lambda)e(\mcC)\to \tilde{\mathcal{L}}(\Lambda)$ is an algebra homomorphism. It is easy to check that $\tau \rho$ is the identity on each generator of $\tilde{\mathcal{L}}(\Lambda)$. Thus $\rho$ is an isomorphism.
\end{proof}

\subsection{The cyclotomic BK subalgbra revisited} Following \cite[Subsection 2.2]{Brundan-Kleshchev09}, we introduce an algebra $\mcR$ which is defined to be the $\Bbbk$-algebra $\mcR$ generated by $$\{y_1,\cdots,y_n, \psi_1,\cdots,\psi_n, \epsilon(\bi) \ |\ \bi\in \mcC,\}$$
subject to the relations (1)-(2) and (4)-(10) of Theorem \ref{thm:deKLR basis}. Similar to the Lusztig extension $\mathcal{L}$, by the defining relations, $\mcR$ has a basis $\{\psi_w\ | \ w\in \mcW\}$ as a $\Bbbk[y]\otimes_\Bbbk \mathcal{E}$-module. Moreover, it can be viewed as a subalgebra of $\mathcal{L}$.

We denote by $\mcR(\Lambda):=\mcR/\langle y_1^{\Lambda_{i_1}}\epsilon(\bi) | \bi\in \mcC \rangle$. Then we have the following isomorphism of cyclotomic algebras.
\begin{proposition} \label{prop:cyclotomic semi-KLRA} We have $\Bbbk$-algebra isomorphsim $\mcR(\Lambda)\cong \tilde{\mathcal{L}}(\Lambda)$.
\end{proposition}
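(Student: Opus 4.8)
The plan is to exploit the fact that the defining data of $\mcR$ and of $\tilde{\mathcal{L}}$ differ only by the formal inverses: $\tilde{\mathcal{L}}$ is obtained from $\mcR$ by adjoining, for each $f\in\Bbbk[y]$ with $f(0)\neq 0$, a symbol $f^{-1}$ subject to the single extra relation (3) of Theorem \ref{thm:deKLR basis}, namely $ff^{-1}=f^{-1}f=1$. All the remaining defining relations (1)--(2) and (4)--(10) are common to both algebras and involve only $y_1,\dots,y_n,\psi_1,\dots,\psi_n$ and the idempotents $\epsilon(\bi)$. I therefore want to show that in the cyclotomic quotient these formal inverses are redundant, because each such $f(y)$ is already invertible in $\mcR(\Lambda)$.

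First I would establish that the elements $y_1,\dots,y_n$ are nilpotent in $\mcR(\Lambda)$. The argument of \cite[Lemma 2.1]{Brundan-Kleshchev09} applies: starting from the cyclotomic relations $y_1^{\Lambda_{i_1}}\epsilon(\bi)=0$ one first sees that $y_1$ is nilpotent, and then the commutation relation (5) of Theorem \ref{thm:deKLR basis}, together with the braid-type relations (7)--(10), propagates nilpotency to $y_2,\dots,y_n$. Crucially this argument uses only relations (1)--(2) and (4)--(10), so it is valid in $\mcR(\Lambda)$ even though $\mcR$ carries no formal inverses. Since the $y_r$ commute by (2) and each is nilpotent, any polynomial in the $y_r$ with zero constant term is nilpotent; hence for $f\in\Bbbk[y]$ with $f(0)\neq 0$ the element $N:=1-f(y)/f(0)$ is nilpotent and $f(y)^{-1}=f(0)^{-1}\sum_{k\geq 0}N^k$ (a finite sum) is a well-defined element of $\mcR(\Lambda)$, itself a polynomial in the $y_r$.

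With invertibility in hand I would construct two mutually inverse homomorphisms. The inclusion of generators and relations gives a homomorphism $\iota\colon\mcR\to\tilde{\mathcal{L}}$ fixing the common generators; since it carries the cyclotomic ideal into the cyclotomic ideal it descends to $\bar\iota\colon\mcR(\Lambda)\to\tilde{\mathcal{L}}(\Lambda)$. Conversely I would define $\phi\colon\tilde{\mathcal{L}}\to\mcR(\Lambda)$ by sending $y_r,\psi_r,\epsilon(\bi)$ to the same-named elements and each $f^{-1}$ to the inverse of $f(y)$ constructed above; relations (1)--(2) and (4)--(10) hold because $\mcR(\Lambda)$ satisfies them, and relation (3) holds by the very choice of $\phi(f^{-1})$, so $\phi$ is well defined, and as it kills the cyclotomic generators it descends to $\bar\phi\colon\tilde{\mathcal{L}}(\Lambda)\to\mcR(\Lambda)$. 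Both composites are the identity on the generating set $\{y_r,\psi_r,\epsilon(\bi)\}$; on a symbol $f^{-1}$ one has $\bar\iota\bar\phi(f^{-1})=\bar\iota\big(f(y)^{-1}\big)$, which is the inverse of $f(y)$ in $\tilde{\mathcal{L}}(\Lambda)$ and hence equals $f^{-1}$ by uniqueness of inverses. Thus $\bar\iota$ and $\bar\phi$ are mutually inverse isomorphisms.

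The only genuine content, and hence the main obstacle, is the nilpotency of the $y_r$ in $\mcR(\Lambda)$; everything after that is formal. I expect the delicate point to be checking that the Brundan--Kleshchev nilpotency argument never appeals to the formal inverses, so that it transfers to $\mcR(\Lambda)$ rather than only to $\tilde{\mathcal{L}}(\Lambda)$, where nilpotency was already noted in the Remark above.
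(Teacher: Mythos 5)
Your proof is correct, and it rests on exactly the same key input as the paper's: the nilpotency of $y_1,\dots,y_n$ in $\mcR(\Lambda)$, obtained by transferring the argument of \cite[Lemma 2.1]{Brundan-Kleshchev09}, which makes every $f(y)$ with $f(0)\neq 0$ invertible via a finite geometric series. Where you diverge is in how the homomorphism $\tilde{\mathcal{L}}(\Lambda)\to\mcR(\Lambda)$ is produced. You invoke the universal property of the presentation of $\tilde{\mathcal{L}}$: relations (1)--(2) and (4)--(10) already hold in $\mcR(\Lambda)$, the symbols $f^{-1}$ can be sent to the inverses just constructed so that relation (3) holds by fiat, and the resulting map kills the cyclotomic ideal. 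The paper instead first identifies $\tilde{\mathcal{L}}\cong\mcR\otimes_{\Bbbk[y]^{\mcW}}\tilde F$, where $\tilde F$ is the localization of the invariant ring $\Bbbk[y]^{\mcW}$ at $\{f\mid f(0)\neq 0\}$, and assembles the inverse as $\pi_1\otimes\pi_2$ from the quotient map $\pi_1\colon\mcR\to\mcR(\Lambda)$ and the induced map $\pi_2\colon\tilde F\to\mcR(\Lambda)$. Your route is more elementary: it avoids having to establish the tensor-product description, which the paper only sketches as ``similar to the proof of (\ref{deK(x)})'', whereas the paper's version buys an additional structural statement exhibiting $\tilde{\mathcal{L}}$ as a localization of $\mcR$ over its invariant subring. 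The one point you rightly flag, and should make explicit in a write-up, is that the Brundan--Kleshchev nilpotency argument must be checked to use only relations available in $\mcR$ (no formal inverses); the paper asserts this with the same level of detail, so you are on equal footing there.
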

\begin{proof}  Similarly to the proof of \cite[Lemma 2.1]{Brundan-Kleshchev09}, we can also show that the elements $y_r$ are nilpotent in $\mcR(\Lambda)$. Thus if $f(y)\in \Bbbk[y]$ with $f(0)\neq0$, the polynomial $f(y)-f(0)$ is nilpotent in $\tilde{\mathcal{L}}(\Lambda)$ by the nilpotency of the elements $y_r$. So there exists some $g(y)\in \Bbbk[y]$ and $m\in \mathbb{N}$ such that $g(y)^m=0$ and $f^{-1}(y)=f(0)^{-1}\sum_{l=0}^m g(y)^l$ in $\mcR(\Lambda)$. Therefore, the  homomorphism $\mcR\hookrightarrow \tilde{\mathcal{L}}\twoheadrightarrow \tilde{\mathcal{L}}(\Lambda)$ is surjective and induces a surjective homomorphism $\pi\colon \mcR(\Lambda)\to \tilde{\mathcal{L}}(\Lambda)$. Let $\tilde{F}$ be the localization of the commutative ring $\Bbbk[y]^{\mcW}$ of $\mcW$-invariants in $\Bbbk[y]$ with respect to $\{f\in \Bbbk[y]^{\mcW} \  | \ f(0)\neq 0\}$. Similar to the proof of (\ref{deK(x)}), we can show that $\tilde{\mathcal{L}}\cong \mathcal{R}\otimes_{\Bbbk[y]^{\mcW}}\tilde{F}$. Since the elements $y_r$ are nilpotent in
	$\mathcal{R}(\Lambda)$, similar to the proof above, we know that if $f(y)\in K[y]^{\mcW}$ with $f(0)\neq 0$, then it is a unit in $\mathcal{R}(\Lambda)$. Thus the homomorphism $\Bbbk[y]^{\mcW}\hookrightarrow \mathcal{R}\stackrel{\pi_1}\twoheadrightarrow \mathcal{R}(\Lambda)$ induces a morphism $\pi_2\colon \tilde{F}\to \mathcal{R}(\Lambda)$. Therefore we have an induced algebra homomorphism $\pi_1\otimes \pi_2\colon \tilde{\mathcal{L}}\to \mathcal{R}(\Lambda)$. The homomorphism $\pi_1\otimes\pi_2$ induces an algebra homomorphism $\pi'\colon \tilde{\mathcal{L}}(\Lambda)\to \mathcal{R}(\Lambda)$. It is easy to check that $\pi$ and $\pi'$ are two-sided inverses. So $\mcR(\Lambda)\cong \tilde{\mathcal{L}}(\Lambda)$.
\end{proof}

By Theorem \ref{thm:derho} and the Proposition above, we get a Brundan-Kleshchev type isomorphism for the degenerate affine Hecke algebra.
\begin{corollary}\label{cor:cyclotomic} We have $\Bbbk$-algebra isomorphism $\mcH(\Lambda)e(\mcC)\cong \mcR(\Lambda)$.
\end{corollary}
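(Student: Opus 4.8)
The plan is to obtain this isomorphism as an immediate composition of the two results already in hand, so that no genuinely new argument is required. The two ingredients are exactly those proved above: Theorem \ref{thm:derho} furnishes an isomorphism of $\Bbbk$-algebras $\rho\colon \tilde{\mathcal{L}}(\Lambda)\xrightarrow{\sim}\mcH(\Lambda)e(\mcC)$ between the cyclotomic BK subalgebra and the cyclotomic degenerate affine Hecke algebra cut out by the central idempotent $e(\mcC)$, while Proposition \ref{prop:cyclotomic semi-KLRA} identifies $\tilde{\mathcal{L}}(\Lambda)$ with the purely KLR-type algebra $\mcR(\Lambda)$ via an isomorphism $\pi\colon \mcR(\Lambda)\xrightarrow{\sim}\tilde{\mathcal{L}}(\Lambda)$.

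First I would invoke Theorem \ref{thm:derho} to produce $\rho$, and then Proposition \ref{prop:cyclotomic semi-KLRA} to produce $\pi$. Composing gives a map $\rho\circ\pi\colon \mcR(\Lambda)\to \mcH(\Lambda)e(\mcC)$, which, as a composition of two isomorphisms of $\Bbbk$-algebras, is itself an isomorphism of $\Bbbk$-algebras. This is precisely the assertion of the corollary, so the proof is complete at that point.

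There is essentially no obstacle remaining at this stage: all of the substantive work has already been carried out in establishing the two component isomorphisms, namely constructing the KLR-type generators of the Lusztig extension, verifying the defining relations (1)--(10) of Theorem \ref{thm:deKLR basis}, realizing $e(\mcC)$ as a central idempotent of $\mcH(\Lambda)$ via Lemma \ref{lem:degenerate}, and proving the nilpotency statements that reconcile the localized and cyclotomic pictures. The only point worth recording, should one want an explicit description rather than an abstract composite, is the effect of $\rho\circ\pi$ on generators: tracing through $\pi$ and then $\rho$ sends $y_r\mapsto \sum_{\bi\in\mcC}(x_r-i_r)e(\bi)$ and $\epsilon(\bi)\mapsto e(\bi)$, and carries $\psi_r=\sum_{\bi\in\mcC}[\theta_r-\delta_{i_r}^{0}y_r^{-1}]\epsilon(\bi)$ to the corresponding KLR-type combination of $t_r$ with rational expressions in the $x_s$, now read inside $\mcH(\Lambda)e(\mcC)$. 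I would note this only as a remark, since it is not needed for the statement itself.
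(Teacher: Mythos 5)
Your proposal is correct and matches the paper exactly: the corollary is obtained there precisely by composing the isomorphism $\tilde{\mathcal{L}}(\Lambda)\cong \mcH(\Lambda)e(\mcC)$ of Theorem \ref{thm:derho} with the isomorphism $\mcR(\Lambda)\cong \tilde{\mathcal{L}}(\Lambda)$ of Proposition \ref{prop:cyclotomic semi-KLRA}. Your added remark tracing the composite on generators is harmless but not needed.
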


\section{The BK subalgebra in non-degenerate case} In this section, we assume that $e$ is the smallest positive integer such that $1+q+\cdots +q^{e-1}=0$ and setting $e=0$ if no such that integer exists.

\subsection{The non-degenerate affine Hecke algebras} We define the {\itshape non-degenerate affine Hecke algebra} $\mcH_q$ to be the unital $\Bbbk$-algebra with generators $\{X_r^{\pm 1}, T_r  |  r\in [n]\}$ subject to the following relations for all admissible indices:
\begin{align}
\label{affhecke X}
X_rX_s&=X_sX_r,\quad X_rX_r^{-1}=X_r^{-1}X_r=1;
\\
\label{TX}
T_r X_s &= \sigma_r(X_s)T_r+(1-q)\mathrm{D}_r(X_s);\\
\label{relation:affhecke Tq}
\quad (T_r + 1)(T_r - q) &= 0;
\\
\label{relation:affinehecke_T}
T_r  T_s &= T_s  T_r,\hspace{43.4mm} \text{if $r \nslash s$};
\\
\label{TTT}
T_r T_sT_r &= T_sT_rT_s, \hspace{39.4mm} \text{if $r -\- s$};
\\
\label{TTTT}
(T_r T_s)^2 &= (T_sT_r)^2, \hspace{39mm} \text{if $\xymatrix@C10pt{r\ar@{=}[r]|{\rangle}\ar@{=}[r]&s}$};	
\\
\label{TTTTTT}
(T_r T_s)^3 &= (T_sT_r)^3, \hspace{39.4mm} \text{if $\xymatrix@C10pt{r\ar@3{-}[r]|{\rangle}\ar@3{-}[r]&s}$}.	
\end{align}

If $w=\sigma_{r_1} \sigma_{r_2}\cdots \sigma_{r_m}$ is a reduced expression in $\mcW$, then $$T_w:=t_{r_1}t_{r_2}\cdots t_{r_m}$$ is a well-defined element in $\mcH_q$. By \cite[Lemma 3.4]{Lusztig89}, the algebra $\mcH_q$ has Bernstein-Zelevinski basis $\{T_w \ | \ w\in \mcW\}$ as $\Bbbk[X^{\pm1}]$-module.
	
\subsection{The rationalization of $\mcH_q$} Recall \cite[proposition 3.11]{Lusztig89} that the center of $\mcH_q$ is $\mathcal{Z}$. Then $\mcH$ can be seen as a $\mcZ$-subalgebra (identified with the subspace $\mcH_q\otimes 1$) of the $\mcZ$-algebra $$\mcH_{q,\mcF}:=\mcH_q\otimes_{\mcZ} \mcF.$$ Moreover, we can identify the rational polynomial field $\Bbbk(X)$ as a subspace of $\mcH_{q,\mcF}$ via the natural isomorphism (\ref{K(X)}). For any $r\in [n]$ and $f\in \Bbbk(X)$, as a consequence of \cite[Subsection 3.12 (d)]{Lusztig89}, there holds that
\begin{equation} \label{fT} T_rf=\sigma_r(f)T_r+(1-q)\mathrm{D}_r(f).\end{equation}

\subsection{Intertwining elements}  For $r\in [n]$, we define the {\itshape intertwining element} $\Phi_r$ in $\mcH_{q,\mcF}$ as follows:
$$\Phi_r:=T_r+(1-q)(1-X_r)^{-1}.$$
The following result is the non-degenerate version of Proposition \ref{prop:deintert basis}. Its proof is similarly to the degenerate case.
\begin{proposition} \label{prop:intert basis}The algebra $\mcH_{q,\mcF}$ is generated by $\{X_1,\cdots,X_n,\Phi_1,\cdots,\Phi_n,f^{-1}\ | \ 0\neq f\in \Bbbk[X]\}$ subject to the following relations for all admissible $r,s$:
	\begin{align} \label{XrXs}X_rX_s&=X_sX_r;\\
	\label{ff^{-1}} ff^{-1}&=f^{-1}f=1 \hspace{15mm} \text{$\forall 0\neq f\in \Bbbk[X]$};\\
	\label{gPhi} \Phi_rX_s&=\sigma_r(X_s)\Phi_r;\\
	\label{Phi^2}\Phi_r^2&=\tfrac{(1-qX_r)(q-X_r)}{(1-X_r)^2};\\
	\label{Phi2}\Phi_r\Phi_s&=\Phi_s\Phi_r \hspace{20mm} \text{if $r\nslash s$};\\
	\label{Phi3}\Phi_r\Phi_s\Phi_r&=\Phi_s\Phi_r\Phi_s \hspace{16.4mm} \text{if $r-\-s$};\\
	\label{Phi4}(\Phi_r\Phi_s)^2&=(\Phi_s\Phi_r)^2\hspace{15mm} \text{if $\xymatrix@C10pt{r\ar@{=}[r]|{\rangle}\ar@{=}[r]&s}$};\\
	\label{Phi6}(\Phi_r\Phi_s)^3&=(\Phi_s\Phi_r)^3 \hspace{15mm} \text{if $\xymatrix@C10pt{r\ar@3{-}[r]|{\rangle}\ar@3{-}[r]&s}$}.
	\end{align}	
\end{proposition}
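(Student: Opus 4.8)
The plan is to follow the proof of Proposition~\ref{prop:deintert basis} line by line, replacing the triple $(t_r,\phi_r,x_r)$ and the divided difference operator $\parti_r$ by their non-degenerate counterparts $(T_r,\Phi_r,X_r)$ and the Demazure operator $\mathrm{D}_r$. First, since $T_r=\Phi_r-(1-q)(1-X_r)^{-1}$, the Bernstein-Zelevinski decomposition $\mcH_{q,\mcF}=\oplus_{w\in\mcW}\Bbbk(X)T_w$ shows that the listed elements generate $\mcH_{q,\mcF}$, and the relations (\ref{XrXs}) and (\ref{ff^{-1}}) are immediate because $\Bbbk(X)$ is a commutative subalgebra.

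I would then check the defining relations of the $\Phi_r$. The commutation relation (\ref{gPhi}) is a short direct computation: by (\ref{fT}) one has $T_rX_s=\sigma_r(X_s)T_r+(1-q)\mathrm{D}_r(X_s)$, and since $\mathrm{D}_r(X_s)=\bigl(\sigma_r(X_s)-X_s\bigr)(1-X_r)^{-1}$, the correction term absorbs the extra $(1-q)(1-X_r)^{-1}X_s$ coming from the definition of $\Phi_r$, leaving exactly $\Phi_rX_s=\sigma_r(X_s)\Phi_r$. The braid relations (\ref{Phi2})--(\ref{Phi6}) are the non-degenerate analogue of \cite[Proposition 5.2]{Lusztig89} applied to the intertwiners of $\mcH_{q,\mcF}$. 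The remaining quadratic relation (\ref{Phi^2}) is the one computation with no formal analogue on the degenerate side: writing $a:=(1-q)(1-X_r)^{-1}$, using the Hecke quadratic $T_r^2=(q-1)T_r+q$ and $\sigma_r(X_r)=X_r^{-1}$, one expands $\Phi_r^2=T_r^2+T_ra+aT_r+a^2$ and checks that the coefficient of $T_r$ cancels while the scalar part collapses to $(1-qX_r)(q-X_r)(1-X_r)^{-2}$.

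For the completeness of the presentation the argument transfers verbatim. Let $\mathcal{A}$ be the abstract $\Bbbk$-algebra on the given generators modulo (\ref{XrXs})--(\ref{Phi6}); there is an obvious surjective, $\Bbbk(X)$-linear homomorphism $\pi\colon\mathcal{A}\to\mcH_{q,\mcF}$. Since $\mcH_{q,\mcF}$ is $\Bbbk(X)$-free of rank $|\mcW|$ on $\{T_w\}$, it suffices to show $\mathcal{A}$ has the same rank. Setting $\Phi_w:=\Phi_{r_1}\cdots\Phi_{r_m}$ for a reduced word $w=\sigma_{r_1}\cdots\sigma_{r_m}$---well defined by (\ref{Phi2})--(\ref{Phi6})---and using (\ref{gPhi}) to move scalars to the left together with (\ref{Phi^2})--(\ref{Phi6}) to reduce products of the $\Phi_r$, every element of $\mathcal{A}$ takes the form $\sum_{w}f_w\Phi_w$ with $f_w\in\Bbbk(X)$. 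The Bruhat-triangularity $\Phi_w=T_w+\sum_{u<w}g_uT_u$ together with the linear independence of the $T_w$ then forces the $\Phi_w$ to be $\Bbbk(X)$-linearly independent, so $\pi$ is an isomorphism.

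The main obstacle is thus the quadratic relation (\ref{Phi^2}): it is the only place where the Demazure operator and the inhomogeneous quadratic $T_r^2=(q-1)T_r+q$ produce an answer of a genuinely different shape---a rational function in $X_r$ rather than the tidy $1-x_r^{-2}$---so the identity $q+(1-q)^2(-X_r)(1-X_r)^{-2}=(1-qX_r)(q-X_r)(1-X_r)^{-2}$ must be verified by hand. Everything else is a mechanical translation of the degenerate proof.
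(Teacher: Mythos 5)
Your proposal is correct and follows exactly the route the paper intends: the paper gives no separate argument for Proposition \ref{prop:intert basis}, stating only that ``its proof is similar to the degenerate case,'' and your line-by-line translation of the proof of Proposition \ref{prop:deintert basis} (generation via $T_r=\Phi_r-(1-q)(1-X_r)^{-1}$, the braid relations from Lusztig, and the rank-$|\mcW|$ freeness argument via the triangularity $\Phi_w=T_w+\sum_{u<w}g_uT_u$) is precisely that. Your explicit verification of the quadratic relation (\ref{Phi^2}), including the cancellation of the $T_r$-coefficient and the identity $q(1-X_r)^2-(1-q)^2X_r=(1-qX_r)(q-X_r)$, is accurate and usefully fills in the one computation the paper leaves implicit.
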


 If $w=\sigma_{r_1} \sigma_{r_2}\cdots \sigma_{r_m}$ is a reduced expression in $\mcW$, then we have a well-defined element $$\Phi_w:=\Phi_{r_1}\Phi_{r_2}\cdots \Phi_{r_m}\in \mcH_{q,\mcF}.$$ The Proposition above shows that $\mcH_{q,F}$ has decompositions
\begin{equation} \label{rationalization Tw}
\mcH_{q,F}=\oplus_{w\in \mcW}\Bbbk(X)T_w=\oplus_{w\in \mcW}T_w\Bbbk(X)
\end{equation}
and \begin{equation} \label{rationalization phi_w}
\mcH_{q,F}=\oplus_{w\in \mcW}\Bbbk(X)\Phi_w=\oplus_{w\in \mcW}\Phi_w\Bbbk(X).
\end{equation}

\subsection{The Lusztig extension of $\mcH_q$} Let $\mathcal{E}$ be the unital $\Bbbk$-algebra as defined in (\ref{deepsilon}).
The {\it Lusztig extension} of $\mcH_q$ with respect to $\mathcal{E}$ is the $\Bbbk$-algebra $\mathcal{L}_q$ which is equal as $\Bbbk$-space to the tensor product $$\mathcal{L}:=\mcH_{q,F}\otimes_\Bbbk\mathcal{E}=\oplus_{w\in \mcW,\bi\in \mcC}\Phi_w\Bbbk(X)\epsilon(\bi)$$
of the rationalization algebra $\mcH_{q,F}$ and the semi-simple algebra $\mathcal{E}$. Multiplication is defined so that $\mcH_{q,F}$ (identified with the subspace $\mcH_{q,F}\otimes 1$) and $\mathcal{E}$ (identified with the subspace $1\otimes \mathcal{E}$) are subalgebras of $\mathcal{L}_q$, and in addition
\begin{align}
\label{Xepis}X_r\epsilon(\bi)&=\epsilon(\bi)X_r,\\
\label{Phiepsi}\Phi_r\epsilon(\bi)&=\epsilon(\sigma_r(\bi))\Phi_r.
\end{align}

For $r\in [n]$ and $\bi\in \mcC$, it is easy to see that
\begin{equation} \label{Trepsilon}T_r\epsilon(\bi)=\epsilon(\sigma_r(\bi))T_r+(1-q)(1-X_r)^{-1}\epsilon(\sigma_r(\bi))-(1-q)(1-X_r)^{-1}\epsilon(\bi).\end{equation}

Similar to the degenerate case, the algebra $\mathcal{L}_q$ has decompositions
\begin{equation} \label{Lusztig Tw}
\mathcal{L}_q=\oplus_{w\in \mcW,\bi\in \mcC}\Bbbk(X)T_w\epsilon(\bi)=\oplus_{w\in \mcW,\bi\in \mcC}T_w\Bbbk(X)\epsilon(\bi)
\end{equation}
and \begin{equation} \label{Lusztig Phi_w}
\mathcal{L}_q=\oplus_{w\in \mcW,\bi\in \mcC}\Bbbk(X)\Phi_w\epsilon(\bi)=\oplus_{w\in \mcW,\bi\in \mcC}\Phi_w\Bbbk(X)\epsilon(\bi).
\end{equation}

\subsection{Brundan-Kleshchev auxiliary elements}
Recall that \cite[(4.21)]{Brundan-Kleshchev09}, for each $r\in [n]$, Brundan and Kleshchev introduced the elements
\begin{equation} \label{YX} Y_r:=\sum_{\bi\in \mcC}(1-q^{-i_r}X_r)\epsilon(\bi)\end{equation} Then $Y_r\epsilon(\bi)=\epsilon(\bi)Y_r$ by \ref{Xepis}, and $Y_r$ is a unit in $\mathcal{L}_q$ with $$Y_r^{-1}=\sum_{\bi\in \mcC}(1-q^{-i_r}X_r)^{-1}\epsilon(\bi).$$

Let $\Bbbk[Y^{\pm 1}]$ be the Laurent polynomial ring with $Y:=Y_1,Y_2,\cdots,Y_n$ and $\Bbbk(Y)$ be the rational function field. Similar to the proof of Lemma \ref{lem:K(x)=K(y)}, we know that $\Bbbk(X)\otimes_\Bbbk\mathcal{E}=\Bbbk(Y)\otimes_\Bbbk\mathcal{E}$ in $\mathcal{L}_q$.

We observe that there is an action of $\mcW$ on $\Bbbk[Y^{\pm 1}]$ (by the ring automorphism) since $$\sigma_r(Y_s)=\sum_{\bi\in\mcC}(\sigma_r(X_s)-q^{i_s})\epsilon(\sigma_r(\bi))=1-(1-Y_s)(1-Y_r)^{-a_{sr}}$$
for all $r,s\in[n]$. It can be extended to an action of $\mcW$ on the rational function field $\Bbbk(Y)$ via the field automorphism
$w(\frac{f(Y)}{g(Y)})=\frac{f(w(Y))}{g(w(Y))}$ for any $w\in \mcW$. 
For simplicity, for each $r\in [n]$, we shall write $Y_{r1}=\sigma_s(Y_r), Y_{r2}=\sigma_r\sigma_s(Y_r), Y_{r3}=\sigma_s\sigma_r\sigma_s(Y_r), Y_{r4}=\sigma_r\sigma_s\sigma_r\sigma_s(Y_r), Y_{r5}=\sigma_s\sigma_r\sigma_s\sigma_r\sigma_s(Y_r)$ in the sequel.

Following Brundan and Kleshchev, for each $r\in [n]$, we define the element
\begin{equation}\label{Qr(i)} Q_r(\bi)=\begin{cases} 1-q-Y_r & \text{if $i_r=0$},\\
1 & \text{if $i_r=1, e\neq 2$},\\
[q-q^{-1}(1-Y_r)][1-q^{-1}(1-Y_r)]^{-2}& \text{if $i_r=-1, e\neq 2$},\\
[q(1-Y_r)-1]^{-1} & \text{if $i_r=1, e=2$},\\
[q^{i_r}(1-Y_r)-q][q^{i_r}(1-Y_r)-1]^{-1}& \text{if $i_r\neq 0, \pm 1$}.
\end{cases}
\end{equation}
Similar to the proof of \cite[(4.33)-(4.35)]{Brundan-Kleshchev09}, we can deduce the following result.
\begin{lemma} Let $r,s\in [n]$ and $\bi\in \mcC$. Then
	\begin{equation}\label{QsrQsr} \sigma_r(Q_r(\sigma_r(\bi)))Q_r(\bi)=\begin{cases}\tfrac{[(1-Y_r)-q][1-q(1-Y_r)]}{1-y_r} & \text{if $i_r=0$};\\
	\tfrac{q(Y_r-1)[1-q^2(1-Y_r)]}{[1-q(1-Y_r)]^{2}}&\text{if $i_r=1, e\neq 2$};\\
	\tfrac{q-q^{-1}(1-Y_r)}{[1-q^{-1}(1-Y_r)]^2}&\text{if $i_r=-1, e\neq2$};\\
	-\tfrac{q(1-Y_r)}{[1-q(1-Y_r)]^2}&\text{if $i_r=1, e=2$};\\
	\tfrac{[q-q^{i_r}(1-Y_r)][1-q^{1+i_r}(1-Y_r)]}{[1-q^{i_r}(1-Y_r)]^2}& \text{if $i_r\neq 0, \pm 1$}.
	\end{cases}
	\end{equation}
\end{lemma}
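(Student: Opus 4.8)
The plan is to reduce the whole statement to a one-variable rational-function identity in $Z := 1 - Y_r$ and then verify the five cases by direct substitution, exactly paralleling the computation for \cite[(4.33)--(4.35)]{Brundan-Kleshchev09} that the statement cites. Every $Q_r(\bi)$ appearing in (\ref{Qr(i)}) is a rational function of $Y_r$ alone, hence lies in the subfield $\Bbbk(Y_r)=\Bbbk(Z)\subseteq\Bbbk(Y)$, so the whole argument takes place in $\Bbbk(Z)$.

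First I would record the fundamental substitution rule. Taking $s=r$ in the displayed action formula $\sigma_r(Y_s)=1-(1-Y_s)(1-Y_r)^{-a_{sr}}$ and using $a_{rr}=2$ gives $\sigma_r(Y_r)=1-(1-Y_r)^{-1}$, that is,
\[
\sigma_r(1-Y_r)=(1-Y_r)^{-1}.
\]
Since $\sigma_r$ is a field automorphism fixing the scalars $q\in\Bbbk$, it preserves $\Bbbk(Z)$ and acts there by the single substitution $Z\mapsto Z^{-1}$. Next I would observe that $(\sigma_r(\bi))_r=i_r-a_{rr}i_r=-i_r$, so $Q_r(\sigma_r(\bi))$ is read off from (\ref{Qr(i)}) using the branch attached to $-i_r$, while $Y_r$ itself is left unchanged. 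The branches therefore pair up as $0\leftrightarrow 0$, as $1\leftrightarrow -1$ (when $e\neq 2$), and as $i_r\leftrightarrow -i_r$ for $i_r\neq 0,\pm 1$; in the collapsed case $e=2$ one has $1=-1$ in $I$, so the branch $i_r=1$ is paired with itself.

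Then the heart of the proof is a case-by-case computation: in each case I would write $Q_r(\sigma_r(\bi))$ as a rational function of $Z$, apply $Z\mapsto Z^{-1}$ to obtain $\sigma_r(Q_r(\sigma_r(\bi)))$, multiply by $Q_r(\bi)$, and clear denominators to reach the right-hand side of (\ref{QsrQsr}). For instance, when $i_r=0$ one has $Q_r(\bi)=Z-q$ and $\sigma_r(Q_r(\sigma_r(\bi)))=Z^{-1}-q=(1-qZ)/Z$, whose product is $(Z-q)(1-qZ)/Z$; when $i_r=1$ and $e\neq 2$ the factor $Q_r(\bi)=1$ reduces the problem to simplifying $\sigma_r$ applied to the $i_r=-1$ expression; and the generic branch $i_r\neq 0,\pm 1$ collapses cleanly after multiplying numerator and denominator by $q^{i_r}$.

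The only genuine subtlety, and the step I would treat most carefully, is the collapsed case $e=2$. There the standing hypothesis of Section~4 that $1+q+\cdots+q^{e-1}=0$ forces $q=-1$, and one must invoke this equality to bring the product into the claimed form $-q(1-Y_r)/[1-q(1-Y_r)]^2$: the naive simplification produces a denominator $(q-Z)(qZ-1)$ rather than $(qZ-1)^2$, and the two agree precisely because $q=-1$. Apart from this bookkeeping around the identification $1=-1$ in $I$ and the use of $q=-1$, the entire lemma is a routine manipulation in $\Bbbk(Z)$ with no conceptual obstacle.
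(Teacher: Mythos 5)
Your proof is correct and takes essentially the same route as the paper, which gives no independent argument but defers to the direct case-by-case verification of \cite[(4.33)--(4.35)]{Brundan-Kleshchev09}; the substitution $\sigma_r(1-Y_r)=(1-Y_r)^{-1}$, the branch pairing $i_r\leftrightarrow -i_r$, and the use of $q=-1$ in the collapsed case $e=2$ are exactly the required ingredients. All five cases check out as you describe (with the denominator $1-y_r$ in the first case read as the evident typo for $1-Y_r$).
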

Inside $\mathcal{L}_q$, set $$\Theta_r=\Phi_r\sum_{\bi\in \mcC}Q_r^{-1}(\bi)\epsilon(\bi).$$
 These elements have the following nice properties by using relations (\ref{XrXs})-(\ref{Phi6}):
	\begin{align}\label{Thetaepsilon} \Theta_r\epsilon(\bi)&=\epsilon(\sigma_r(\bi))\Theta_r;\\
\label{fTheta} Y_s\Theta_r&=\Theta_r\sigma_r(Ys);\\
\label{Theta2}\Theta_r\Theta_s&=\Theta_s\Theta_r \hspace{19mm} \text{if $r\nslash s$};\\
\label{Theta3}\Theta_r\Theta_s\Theta_r&=\Theta_s\Theta_r\Theta_s \hspace{16mm} \text{if $r-\-s$};\\
\label{Theta4}(\Theta_r\Theta_s)^2&=(\Theta_s\Theta_r)^2\hspace{15mm} \text{if $\xymatrix@C10pt{r\ar@{=}[r]|{\rangle}\ar@{=}[r]&s}$};\\
\label{Theta6}(\Theta_r\Theta_s)^3&=(\Theta_s\Theta_r)^3 \hspace{15mm} \text{if $\xymatrix@C10pt{r\ar@3{-}[r]|{\rangle}\ar@3{-}[r]&s}$}
	\end{align}
	and \begin{equation}\label{Theta^2}\Theta_r^2\epsilon(\bi)=\begin{cases}(Y_r-1)Y_r^{-2}\epsilon(\bi) & \text{if $i_r=0$},\\
	Y_r(Y_r-1)^{-1}\epsilon(\bi) &\text{if $i_r=1, e\neq 2$},\\
	Y_r\epsilon(\bi) & \text{if $i_r=-1, e\neq 2$},\\
	Y_r^2(Y_r-1)^{-1}\epsilon(\bi)& \text{if $i_r=1, e=2$},\\
	\epsilon(\bi)& \text{if $i_r\neq0,\pm1$}.
	\end{cases}
	\end{equation}

In $\mathcal{L}_q$, we have a well-defined element $\Theta_w:=\Theta_{r_1}\Theta_{r_2}\cdots \Theta_{r_m}$ if $w=\sigma_{r_1} \sigma_{r_2}\cdots \sigma_{r_m}$ is a reduced expression in $\mcW$. Since $\Phi_r=\Theta_r\sum_{\bi\in \mcC} Q_r(\bi)\epsilon(\bi)$, thus by the decompositions (\ref{Lusztig Phi_w}) and $\Bbbk(X)\otimes_\Bbbk\mathcal{E}=\Bbbk(Y)\otimes_\Bbbk\mathcal{E}$, we obtain that
\begin{equation} \label{Lusztig Theta_w}
\mathcal{L}_q=\oplus_{w\in \mcW,\bi\in \mcC}\Bbbk(Y)\Theta_w\epsilon(\bi)=\oplus_{w\in \mcW,\bi\in \mcC}\Theta_w\Bbbk(Y)\epsilon(\bi).
\end{equation}

\subsection{The KLR-type generators of $\mathcal{L}_q$} In $\mathcal{L}_q$, for each $r\in [n]$, set \begin{equation} \label{Psitheta}\Psi_r=\sum_{\bi\in \mcC}[\Theta_r-\delta_{i_r}^{0}Y_r^{-1}]\epsilon(\bi).
\end{equation}
Using these elements, we can construct a KLR-type generators of $\mathcal{L}_q$.
\begin{theorem} \label{thm:KLR basis} The algebra $\mathcal{L}_q$ is generated by $\{Y_1,\cdots,Y_n,\Psi_1,\cdots,\Psi_n,f^{-1}, \epsilon(\bi)\ | \ 0\neq f\in \Bbbk[Y], \bi\in \mcC\}$ subject to the following relations for all admissible indices:
	
	$(1)$ $\epsilon(\bi)\epsilon(\bj)=\delta_{\bi}^{\bj}\epsilon(\bi), \quad \sum_{\bi\in \mcC}\epsilon(\bi)=1.$
	
	$(2)$  $Y_r\epsilon(\bi)=\epsilon(\bi)Y_r, Y_rY_s=Y_sY_r.$
	
	$(3)$ For any $0\neq f\in \Bbbk[Y]$, there holds that $ff^{-1}=f^{-1}f=1.$
	
	$(4)$  $ \Psi_r\epsilon(\bi)=\epsilon(\sigma_r(\bi))\Psi_r.$
	
	$(5)$ $\Psi_rY_s\epsilon(\bi)=[\sigma_r(Y_s)\psi_r+\delta_{i_r}^{0}\parti_r(Y_s)]\epsilon(\bi).$

	$(6)$ $\Psi_r^2\epsilon(\bi)=\begin{cases}-\Psi_r\epsilon(\bi) & \text{if $i_r=0$},\\		
	Y_r(Y_r-1)^{-1}\epsilon(\bi) &\text{if $i_r=1, e\neq 2$},\\
	Y_r\epsilon(\bi) & \text{if $i_r=-1, e\neq 2$},\\
	Y_r^2(Y_r-1)^{-1}\epsilon(\bi)& \text{if $i_r=1, e=2$},\\
	\epsilon(\bi)& \text{if $i_r\neq0,\pm1$}.
	\end{cases}$
	
	$(7)$  If $r\nslash s$, then $\Psi_r\Psi_s=\Psi_s\Psi_r.$
	
	$(8)$ If $r-\-s$, then \begin{equation*} \label{Psi3}(\Psi_r\Psi_s\Psi_r-\Psi_s\Psi_r\Psi_s)\epsilon(\bi)=\begin{cases}\frac{1}{1-Y_r}\epsilon(\bi) & \text{if $i_r=-i_s=1, e\neq 2$},\\
	\frac{1}{Y_s-1}\epsilon(\bi) &\text{if $i_r=-i_s=-1, e\neq 2$},\\
	\frac{Y_r-Y_s}{(1-Y_r)(1-Y_s)}\epsilon(\bi) & \text{if $i_r=i_s=1, e=2$},\\
	0 & \text{else}.
	\end{cases}
	\end{equation*}
	
	$(9)$ If $\xymatrix@C10pt{r\ar@{=}[r]|{\rangle}\ar@{=}[r]&s}$, then
	\begin{equation*}\label{Psi4} [(\Psi_r\Psi_s)^2-(\Psi_s\Psi_r)^2]\epsilon(\bi)=\begin{cases}\tfrac{1}{Y_s-1}\Psi_r\epsilon(\bi) & \text{if $i_s=1, i_r=-2, e\neq 2$};\\
	\frac{1}{1-Y_{s1}}\Psi_r\epsilon(\bi)&\text{if $i_s=-1, i_r=2,e\neq 2$}; \\
	\frac{1}{1-Y_{s1}}(Y_r\Psi_r+1)\epsilon(\bi) & \text{if $i_r=0,i_s=1,e=2$},\\
	\frac{2-Y_{s1}}{1-Y_r}\Psi_s\epsilon(\bi) & \text{if $i_r=-i_s=1,e\neq 2$};\\
	\frac{2-Y_{s1}}{Y_{r1}-1}\Psi_s \epsilon(\bi)&\text{if $i_r=-i_s=-1, e\neq 2$}; \\
	\frac{Y_s(2-Y_s)(Y_{s1}-2)}{1-Y_{r1}}\Psi_s\epsilon(\bi) & \text{if $i_r=i_s=1,e=2$};\\
	0 & \text{else}.
	\end{cases}
	\end{equation*}
	
	$(10)$ If $\xymatrix@C10pt{r\ar@3{-}[r]|{\rangle}\ar@3{-}[r]&s}$, then
	\begin{align*}\label{Psi6}
	&[(\Psi_r\Psi_s)^3-(\Psi_s\Psi_r)^3]\epsilon(\bi)\notag\\
	&=\begin{cases}\frac{1}{Y_s-1}\Psi_r\Psi_s\Psi_r\epsilon(\bi) & \text{if $i_{r}=-3, i_s=1,e\neq 2,3$},\\
	\frac{1}{1-Y_{s2}}\Psi_r\Psi_s\Psi_r\epsilon(\bi) & \text{if $i_r=3, i_s=-1, e\neq 2,3$},\\
	\frac{3-3Y_{s1}+Y_{s1}^2}{1-Y_r}\Psi_s\Psi_r\Psi_s\epsilon(\bi) & \text{if $i_r=-i_s=1,e\neq 2$},\\
	\frac{3-3Y_{s1}+Y_{s1}^2}{Y_{r2}-1}\Psi_s\Psi_r\Psi_se(\bi)\epsilon(\bi) & \text{if $i_r=-i_s=-1,e\neq 2$},\\
	\frac{(2-Y_{s2})(Y_{s}-Y_{s1}+Y_{r1}Y_{s1})}{(1-Y_{r1})(1-Y_s)}\Psi_r\epsilon(\bi) & \text{if $i_s=1,i_{r}=-2,e\neq 2$},\\
	\frac{(2-Y_{s2})(Y_{r2}-Y_{r1}+Y_{s1}Y_{r1})}{(1-Y_{r2})(1-Y_{s1})}\Psi_r\epsilon(\bi) & \text{if $i_s=-1,i_{r}=2,e\neq 2$},\\
	\frac{Y_s(Y_s-2)}{1-Y_{s2}}\Psi_s\epsilon(\bi) & \text{if $i_r=1,i_{s}=2, e=4$},\\
	\frac{Y_s(Y_s-2)}{1-Y_{r1}}\Psi_s\epsilon(\bi) & \text{if $i_r=-1,i_{s}= 2, e=4$},\\
	\frac{1}{Y_{s1}-1}\Psi_s\epsilon(\bi) & \text{if $i_r=3,i_{s}=-2, e\neq 2,3$},\\
	\frac{1}{1-Y_{s2}}\Psi_s\epsilon(\bi) & \text{if $i_r=-3,i_{s}=2, e\neq 2,3$},\\
	\frac{1}{Y_r-1}\Psi_s\epsilon(\bi) & \text{if $i_r=1,i_{s}\neq -2, e\neq 2$},\\
	\frac{1}{1-Y_{r1}}\Psi_s\epsilon(\bi) & \text{if $i_r=-1,i_{s}\neq -2, e\neq 2$},\\
	\frac{(Y_{s2}-2)[Y_{r2}Y_{s1}+Y_{r1}Y_s(1-Y_r)]}{(1-Y_{s1})(1-Y_{r2})}(Y_r\psi_r+1)\epsilon(\bi) & \text{if $i_r=0,i_s=1, e=2$},\\
	\frac{(2-Y_s)[Y_{r1}Y_{s2}+Y_rY_{s1}(1-Y_s)^2]}{(1-Y_{r1})(1-Y_{s2})}(Y_s\psi_s+1)\epsilon(\bi) & \text{if $i_r=1, i_s=0, e=2$},\\
	[\frac{Y_{s1}}{1-Y_{s2}}\Psi_r\Psi_s\Psi_r-\frac{Y_{r1}(3-3Y_{s1}+Y_{s1}^2)}{1-Y_{r2}}\Psi_s\Psi_r\Psi_s\\
	-\frac{Y_r^2}{(1-Y_{s1})^2}+\frac{Y_s^2(3-3Y_{s1}+Y_{s1}^2)}{(1-Y_{s2})^2}]\epsilon(\bi) & \text{if $i_r=i_s=1, e=2$},\\
	\frac{1}{Y_s-1}\Psi_r\Psi_s\Psi_r\epsilon(\bi) & \text{if $i_r=0,i_s=1, e=3$},\\
	\frac{1}{1-Y_{s2}}(\Psi_r\Psi_s\Psi_r+\Psi_r\Psi_s)\epsilon(\bi) & \text{if $i_r=0,i_s=-1, e=3$},\\
	0 & \text{else}.
	\end{cases}
	\end{align*}
\end{theorem}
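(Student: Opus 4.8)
The plan is to follow the proof of Theorem \ref{thm:deKLR basis} essentially verbatim, replacing the degenerate data $(\theta_r,y_r)$ by the non-degenerate data $(\Theta_r,Y_r)$ and feeding in the structural relations (\ref{Thetaepsilon})--(\ref{Theta^2}) in place of their degenerate counterparts (these are already available, having been derived from Proposition \ref{prop:intert basis}). Relations (1)--(3) are immediate: (1) is the definition of $\mathcal{E}$, while (2)--(3) record that the $Y_r$ commute with the idempotents and among themselves and that $\Bbbk(Y)$ embeds as a commutative subring. For (4) I would substitute the definition (\ref{Psitheta}) of $\Psi_r$ into (\ref{Thetaepsilon}), using that $\sigma_r(\bi)=\bi$ whenever $i_r=0$ so that the correction term $\delta_{i_r}^{0}Y_r^{-1}$ may be absorbed on either side of $\epsilon(\bi)$. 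Relation (5) follows by combining (\ref{Psitheta}) with (\ref{fTheta}): pushing $Y_s$ past $\Theta_r$ produces $\sigma_r(Y_s)\Theta_r$, and collecting the correction terms yields the defect $\delta_{i_r}^{0}\parti_r(Y_s)$.

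For (6) I would expand $\Psi_r^2\epsilon(\bi)=(\Theta_r-\delta_{i_r}^{0}Y_r^{-1})^2\epsilon(\bi)$ and substitute (\ref{Theta^2}). The only case needing genuine care is $i_r=0$: here $\Theta_r^2\epsilon(\bi)=(Y_r-1)Y_r^{-2}\epsilon(\bi)$ together with the identity $\sigma_r(Y_r)^{-1}=(Y_r-1)Y_r^{-1}$ forces the two cross terms $-\Theta_rY_r^{-1}\epsilon(\bi)-Y_r^{-1}\Theta_r\epsilon(\bi)$ to collapse to $-\Theta_r\epsilon(\bi)$, and the remaining scalars combine to $Y_r^{-1}\epsilon(\bi)$, giving $\Psi_r^2\epsilon(\bi)=-\Psi_r\epsilon(\bi)$. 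This genuinely quadratic relation is the essential non-degenerate feature (reflecting $(T_r+1)(T_r-q)=0$) and has no degenerate analogue, where the corresponding value was $0$.

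The commutation/braid relations (7)--(10) are the computational heart. In each case I would write out the relevant product of factors $(\Theta_\bullet-\delta^0 Y_\bullet^{-1})$ obtained from (\ref{Psitheta}), move all rational coefficients to one side using (\ref{fTheta}) (so that the $Y$'s get replaced by their $\sigma$-twists $Y_{r1},Y_{r2},\dots$), and cancel the leading braid term by the corresponding $\Theta$-braid relation (\ref{Theta2}), (\ref{Theta3}), (\ref{Theta4}) or (\ref{Theta6}). What survives are lower-order terms in which each $\Theta_\bullet^2$ is replaced by the residue-dependent scalar of (\ref{Theta^2}); a case analysis on the residues $i_r,i_s$ and on $e$ then yields the tabulated right-hand sides. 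Completeness --- that (1)--(10) generate all relations --- follows as in the degenerate case: by (\ref{Psitheta}) the passage from $\{\Theta_w\}$ to $\{\Psi_w\}$ is unitriangular over $\Bbbk(Y)\otimes_\Bbbk\mathcal{E}$, so the decomposition (\ref{Lusztig Theta_w}) shows $\{\Psi_w\mid w\in\mcW\}$ is a $\Bbbk(Y)\otimes_\Bbbk\mathcal{E}$-basis of $\mathcal{L}_q$, while relations (1)--(9) already suffice to rewrite an arbitrary element in this normal form.

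The main obstacle will be relation (10), the triple-edge ($G_2$) case: here $(\Psi_r\Psi_s)^3$ and $(\Psi_s\Psi_r)^3$ expand into long sums indexed by the five twists $Y_{r1},\dots,Y_{r5}$, and, unlike the degenerate setting, the coefficients coming from (\ref{Theta^2}) and from $Q_r(\bi)$ of (\ref{Qr(i)}) are rational in $Y_r$ rather than polynomial. Tracking which $\delta^0$ factors survive, applying $\sigma_r,\sigma_r\sigma_s,\dots$ correctly to the $\Theta^2$-scalars, and simplifying the resulting rational functions into the long list of tabulated cases is where essentially all the work lies. The double-edge case (9) is a strictly simpler rehearsal of the same mechanism, so I would carry it out first as a template before attacking (10).
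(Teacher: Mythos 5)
Your proposal matches the paper's proof: the authors likewise treat (1)--(9) as direct transcriptions of the degenerate argument (they explicitly skip them), carry out only the $G_2$ case (10) by expanding the products of $(\Theta_\bullet-\delta^0 Y_\bullet^{-1})$, cancelling the leading term via (\ref{Theta6}) and substituting (\ref{Theta^2}), and establish completeness exactly as you do, via the unitriangularity of $\Psi_w$ against the basis $\{\Theta_w\}$ from (\ref{Lusztig Theta_w}). Your computation of the genuinely new quadratic case $\Psi_r^2\epsilon(\bi)=-\Psi_r\epsilon(\bi)$ for $i_r=0$ is also correct, so the proposal is sound and follows the paper's route.
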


\begin{proof} The proofs of Statements (1)-(9) are similarly to the corresponding Statements of Theorem \ref{thm:deKLR basis} in the degenerate case, hence we skip them. Here we only give the proof of the Statement (10) which is the most complicated one.	
		
	(10) Since $\xymatrix@C10pt{r\ar@3{-}[r]|{\rangle}\ar@3{-}[r]&s}$, by Statement (3), relations (\ref{Psitheta}) and (\ref{fTheta}), we get that	
	\begin{align*}
	&(\Psi_r\Psi_s)^3\epsilon(\bi)\\
	&=(\Theta_r-\tfrac{\delta_{i_{r5}}^0}{Y_r})(\Theta_s-\tfrac{\delta_{i_{s4}}^0}{Y_s})(\Theta_r-\tfrac{\delta_{i_{r3}}^0}{Y_r})(\Theta_s
	-\tfrac{\delta_{i_{s2}}^0}{Y_s})(\Theta_{r}-\tfrac{\delta_{i_{r1}}^0}{Y_r})(\Theta_s-\tfrac{\delta_{i_{s}}^0}{Y_s})\epsilon(\bi)\\
	&=\{(\Theta_r\Theta_s)^3-\tfrac{\delta_{i_s}^0}{Y_{s}}(\Theta_r\Theta_s)^2\Theta_r-\tfrac{\delta_{i_r}^0}{Y_r}\Theta_s(\Theta_r\Theta_s)^2+\tfrac{\delta_{i_{r}}^0\delta_{i_{s}}^0}{Y_{r1}Y_s}(\Theta_r\Theta_s)^2+\tfrac{\delta_{i_r}^0\delta_{i_s}^0}{Y_rY_{s1}}(\Theta_s\Theta_r)^2-(\tfrac{\delta_{i_{r}}^0\delta_{i_s}^0}{Y_sY_{s2}^{2}}\\
	&+\tfrac{\delta_{i_{r1}}^0\Theta_{s2}^{2}}{Y_{r1}})\Theta_r\Theta_s\Theta_r-(\tfrac{\delta_{i_{s1}}^0\Theta_r^2}{Y_{s1}}+\tfrac{\delta_{i_{r}}^0\delta_{i_{s}}^0(3-3Y_{s1}+Y_{s1}^2)}{Y_r^2Y_{r2}})\Theta_s\Theta_r\Theta_s+\delta_{i_{r}}^0[\delta_{i_{s}}^0(\tfrac{\Theta_{r2}^2}{\sigma_r\sigma_s(Y_{s})Y_{s2}}+\tfrac{\sigma_r(\Theta_{r}^{2})}{Y_sY_{s1}}\\
	&+\tfrac{(Y_{s1}-1)^2}{Y_{r2}^{2}Y_{s1}^{2}}+\tfrac{3-Y_{s}-Y_{s2}}{\sigma_r(Y_r^{2})Y_{r2}Y_{s1}}
	+\tfrac{2-Y_{s2}}{Y_rY_{r2}Y_s^{2}})+\delta_{i_{r2}}^0(\tfrac{\sigma_r\sigma_s(\Theta_s^2)}{\sigma_r(Y_r)Y_{r2}}+\tfrac{\Theta_s^2}{Y_rY_{r1}})]\Theta_r\Theta_s+\delta_{i_r}^0[\delta_{i_{s}}^0(\tfrac{\Theta_r^2}{Y_{s1}Y_{s2}}+\tfrac{2-Y_{s1}}{Y_r^2Y_{s2}^{2}})\\
	&+\tfrac{\delta_{i_{r1}}^0\Theta_{s2}^{2}}{Y_rY_{r2}})]\Theta_s\Theta_r-[\delta_{i_{r}}^0\delta_{i_s}^0(\tfrac{(3-Y_{s}-Y_{s2})\Theta_{s1}^2}{\sigma_r(Y_r)Y_{r2}^{2}}+\tfrac{\Theta_{s1}^2}{Y_{r2}Y_sY_{s1}}+\tfrac{\sigma_r(\Theta_{r}^2)}{Y_{s}Y_{s1}^{2}}+\tfrac{\Theta_s^2}{Y_rY_{r1}Y_{s1}}+\tfrac{1}{\sigma_r(Y_r^{2})Y_{s1}^{3}}+\tfrac{1}{Y_rY_s^{2}Y_{s1}^{2}})\\
	&+\tfrac{\delta_{i_{s2}}^0\Theta_{r2}^{2}\Theta_{s1}^2}{Y_{s2}}]\Theta_r-[\delta_{i_r}^0\delta_{i_{s}}^0(\tfrac{(2-Y_{s2})\Theta_r^2}{Y_{r1}Y_{s1}^{2}}+\tfrac{\Theta_{r}^{2}}{Y_rY_sY_{s1}}+\tfrac{\Theta_{r1}^2}{Y_r\sigma_s(Y_s)Y_{s2}}+\tfrac{1}{Y_rY_{r1}^{2}\sigma_s(Y_s^{2})}+\tfrac{3-Y_{s1}-Y_{s2}}{Y_r^{3}Y_{r1}Y_s})\\
	&+\tfrac{\delta_{i_{r2}}^0\Theta_{r}^2\Theta_{s1}^2}{Y_{r2}}+\tfrac{\delta_{i_r}^0\delta_{i_{r1}}^0\delta_{i_{r2}}^0\sigma_s(\Theta_s^2)}{Y_r^2Y_{r1}}]\Theta_s+(\tfrac{\delta_{i_{r2}}^0\delta_{i_s}^0\Theta_{s1}^2}{Y_{r2}Y_{s}}+\tfrac{\delta_{i_{r1}}^0\delta_{i_{s1}}^0\Theta_{s}^{2}}{Y_{r1}Y_{s1}})\Theta_r^2+\tfrac{\delta_{i_r}^0\delta_{i_{s2}}^0\Theta_{r1}^{2}\Theta_s^2}{Y_rY_{s2}}\\	
	&+\delta_{i_{r}}^0\delta_{i_s}^0(\tfrac{(3-Y_{s1}-Y_{s2})\Theta_s^2}{Y_r^2Y_{r1}^{2}}+\tfrac{\Theta_s^2}{Y_r^2Y_{r1}Y_s}+\tfrac{(Y_r+Y_{s1})\Theta_{r}^2}{Y_rY_s^{2}Y_{s1}^2}+\tfrac{1}{Y_r^{3}Y_s^{3}})\}\epsilon(\bi)\\
	\end{align*}	
	Similarly, we can show that
	\begin{align*}
	&(\Psi_s\Psi_r)^3\epsilon(\bi)\\
	&=\{(\Theta_s\Theta_r)^3-\tfrac{\delta_{i_{s}}^0}{Y_s}(\Theta_r\Theta_s)^2\Theta_r-\tfrac{\delta_{i_r}^0}{Y_{r}}(\Theta_s\Theta_r)^2\Theta_s+\tfrac{\delta_{i_r}^0\delta_{i_{s}}^0}{Y_{r1}Y_s}(\Theta_r\Theta_s)^2+\tfrac{\delta_{i_{r}}^0\delta_{i_{s}}^0}{Y_rY_{s1}}(\Theta_s\Theta_r)^2\\
	&-(\tfrac{\delta_{i_{r}}^0\delta_{i_{s}}^0}{Y_s^{2}Y_{s2}}+\tfrac{\delta_{i_{r1}}^0\Theta_s^2}{Y_{r1}})\Theta_r\Theta_s\Theta_r-(\tfrac{\delta_{i_r}^0\delta_{i_{s}}^0(3-3Y_{s1}+Y_{s1}^2)}{Y_rY_{r2}^{2}}+\tfrac{\delta_{i_{s1}}^0\Theta_{r2}^{2}}{Y_{s1}})\Theta_s\Theta_r\Theta_s+\delta_{i_r}^0[\delta_{i_{s}}^0(\tfrac{2-Y_{s2}}{Y_{r2}^{2}Y_s^2}\\
	&+\tfrac{\Theta_{r2}^{2}}{Y_sY_{s2}})+\tfrac{\delta_{i_{r1}}^0\Theta_s^2}{Y_{r1}Y_{r2}}]\Theta_r\Theta_s+\delta_{i_r}^0[\delta_{i_{r2}}^0(\tfrac{\Theta_{s2}^2}{\sigma_s\sigma_r(Y_r)Y_{r2}}+\tfrac{\sigma_s(\Theta_{s}^{2})}{Y_{r}Y_{r1}})+\delta_{i_{s}}^0(\tfrac{\sigma_s\sigma_r(\Theta_r^2)}{\sigma_s(Y_{s})Y_{s2}}+\tfrac{1}{\sigma_s\sigma_r(Y_r^{2})Y_{s2}^{2}}\\
	&+\tfrac{1}{Y_{r1}\sigma_s(Y_s^{2})Y_{s2}}+\tfrac{2-Y_{s1}}{Y_r^{2}Y_sY_{s2}}+\tfrac{\Theta_r^2}{Y_sY_{s1}})]\Theta_s\Theta_r-[\delta_{i_{r}}^0\delta_{i_{s}}^0(\tfrac{(2-Y_{s2})\Theta_s^2}{Y_{r1}^{2}Y_{s1}}+\tfrac{\Theta_{s1}^2}{\sigma_r(Y_r)Y_{r2}Y_s}+\tfrac{\Theta_{s}^{2}}{Y_rY_{r1}Y_s}\\
	&+\tfrac{\sigma_r(\Theta_r^2)}{Y_{s}^2Y_{s1}}+\tfrac{1}{\sigma_r(Y_r^{2})Y_sY_{s1}^{2}}+\tfrac{1}{Y_rY_s^{3}Y_{s1}})+\tfrac{\delta_{i_{s2}}^0\Theta_{r1}^2\Theta_s^2}{Y_{s2}}]\Theta_r-[\delta_{i_r}^0\delta_{i_{s}}^0(\tfrac{(Y_{r1}+Y_{s2})\Theta_{r1}^2}{Y_{r1}\sigma_s(Y_s)Y_{s2}^2}+\tfrac{1}{Y_{r1}^{3}\sigma_s(Y_s^{2})}\\
	&+\tfrac{\Theta_r^2}{Y_{r1}Y_sY_{s1}}+\tfrac{3-Y_{s1}-Y_{s2}}{Y_r^2Y_{r1}^{2}Y_s}+\tfrac{\Theta_{r1}^{2}}{Y_rY_sY_{s2}})+\tfrac{\delta_{i_{r2}}^0\Theta_{s2}^{2}\Theta_{r1}^2}{Y_{r2}}+\tfrac{\delta_{i_r}^0\delta_{i_{r1}}^0\delta_{i_{r2}}^0\sigma_s(\Theta_{s}^2)}{T_{r}Y_{r1}^{2}}]\Theta_s+[\tfrac{\delta_{i_r}^0\delta_{i_{s2}}^0\Theta_{r1}^2}{Y_{r}Y_{s2}}\\
	&+\tfrac{\delta_{i_{r1}}^0\delta_{i_{s1}}^0\Theta_{r}^{2}}{Y_{r1}Y_{s1}}]\Theta_s^2+\tfrac{\delta_{i_{r2}}^0\delta_{i_{s}}^0\Theta_{s1}^{2}\Theta_r^2}{Y_{r2}Y_s}+\delta_{i_r}^0\delta_{i_{s}}^0[\tfrac{(Y_r+Y_{s1})\Theta_r^2}{Y_rY_s^2Y_{s1}^2}+\tfrac{(3-Y_{s1}-Y_{s2})\Theta_s^2}{Y_{r}^2Y_{r1}^{2}}+\tfrac{1}{Y_r^{3}Y_s^{3}}+\tfrac{\Theta_{s}^2}{Y_r^{2}Y_{r1}Y_s}]\}\epsilon(\bi)
	\end{align*}	
	Using relations  (\ref{Theta6}), (\ref{Theta^2}) and (\ref{Psitheta}), we obtain that
	\begin{align*}
	&[(\Psi_r\Psi_s)^3-(\Psi_s\Psi_r)^3]\epsilon(\bi)\\
	&=\{(\tfrac{\delta_{i_{r}}^0\delta_{i_s}^0(1-Y_s)Y_{s1}}{Y_s^{2}Y_{s2}^{2}}+\tfrac{\delta_{i_{r1}}^0\Theta_s^2-\Theta_{s2}^2}{Y_{r1}})\Theta_r\Theta_s\Theta_r+(\tfrac{\delta_{i_{r}}^0\delta_{i_s}^0(3-3Y_{s1}+Y_{s1}^2)(Y_r-1)Y_{r1}}{Y_r^{2}Y_{r2}^{2}}+\tfrac{\delta_{i_{s1}}^0(\Theta_{r2}^2-\Theta_{r}^2)}{Y_{s1}})\Theta_s\Theta_r\Theta_s\\
	&+\delta_{i_r}^0[\delta_{i_s}^0(\tfrac{3-Y_s-Y_{s2}}{\sigma_r(Y_r^2)Y_{r2}Y_{s1}}+\tfrac{(1-Y_{s1})^2}{Y_{r2}^{2}Y_{s1}^{2}}-\tfrac{(2-Y_{s2})Y_{r1}}{\sigma_r(Y_{r})Y_{r2}^{2}Y_s^{2}})+\tfrac{\delta_{i_{s}}^0\sigma_r(\Theta_r^2)-\Theta_{r2}^2}{Y_sY_{s1}}+\tfrac{\delta_{i_{r1}}^0(\sigma_r\sigma_s(\Theta_s^2)-\Theta_s^2)}{\sigma_r(Y_r)Y_{r2}}]\Theta_r\Theta_s\\
	&+\delta_{i_r}^0[\delta_{i_s}^0(\tfrac{(Y_s+Y_{s2}-2)Y_{s1}}{Y_r^{2}Y_{s}Y_{s2}^{2}}-\tfrac{(1-Y_{r1})^2}{Y_{r1}^{2}Y_{s2}^{2}}-\tfrac{(1-Y_s)^2}{Y_{r1}Y_{s}^{2}Y_{s2}})+\tfrac{\delta_{i_{r1}}^0(\Theta_{s2}^2-\sigma_s(\Theta_s^2))}{Y_rY_{r1}}+\tfrac{\delta_{i_{s}}^0(1-Y_s)(\sigma_s\sigma_r(\Theta_r^2)-\Theta_r^2)}{Y_sY_{s2}}]\Theta_s\Theta_r\\
	&+[\delta_{i_r}^0\delta_{i_{s}}^0(\tfrac{(1-Y_s)\Theta_s^2}{Y_{r1}Y_sY_{s1}}+\tfrac{(2-Y_{s2})\Theta_s^2}{Y_{r1}^{2}Y_{s1}}+\tfrac{(Y_{r2}+Y_{s2}-2)\Theta_{s1}^{2}}{Y_{r2}^{2}Y_{s}}-\tfrac{\Theta_{s1}^{2}}{Y_{r2}Y_{s}Y_{s1}}+\tfrac{(1-Y_s)Y_r\sigma_r(\Theta_{r}^2)}{Y_{s}^{2}Y_{s1}^{2}}+\tfrac{1-Y_s}{Y_s^{3}Y_{s1}^{2}}\\
	&+\tfrac{(1-Y_r)(1-Y_{s1})}{Y_rY_sY_{s1}^{3}})+\tfrac{\delta_{i_{s2}}^0(\Theta_{r1}^2\Theta_s^2-\Theta_{r2}^{2}\Theta_{s1}^2)}{Y_{s2}}]\Theta_r+[\delta_{i_r}^0\delta_{i_{s}}^0(\tfrac{(Y_{r1}+Y_{s2})\Theta_{r1}^{2}}{Y_{r1}\sigma_s(Y_s)Y_{s2}^2}+\tfrac{(2-Y_s)\Theta_{r1}^{2}}{Y_{r}Y_sY_{s2}}-\tfrac{(2-Y_{s2})\Theta_r^2}{Y_{r1}Y_{s1}^{2}}\\
	&+\tfrac{(Y_r-Y_{r1})\Theta_r^2}{Y_rY_{r1}Y_sY_{s1}}+\tfrac{Y_r-Y_{r1}}{Y_rY_{r1}^3\sigma_s(Y_s^2)}+\tfrac{(Y_r-Y_{r1})(3-Y_{s1}-Y_{s2})}{Y_r^{3}Y_{r1}^{2}Y_{s}})+\tfrac{\delta_{i_r}^0\delta_{i_{r1}}^0(Y_r-Y_{r1})\sigma_s(\Theta_{s}^2)}{Y_{r}^{2}Y_{r1}^{2}}\\
	&+\tfrac{\delta_{i_{r2}}^0(\Theta_{r1}^2\Theta_{s2}^2-\Theta_{r}^{2}\Theta_{s1}^2)}{Y_{r2}}]\Theta_s\}\epsilon(\bi)\\
	&=\begin{cases}  \delta_{i_{r1}}^0\frac{\Theta_s^2-\Theta_{s2}^2}{Y_{r1}}\Theta_r\Theta_s\Theta_r\epsilon(\bi) & \text{if $i_{r1}=0,i_r\neq 0,i_{s1}\neq 0$},\\
	\delta_{i_{s1}}^0\frac{\Theta_{r2}^2-\Theta_{r}^2}{Y_{s1}}\Theta_s\Theta_r\Theta_s\epsilon(\bi) & \text{if $i_{s1}=0,i_r\neq 0,i_{r1}\neq 0$},\\
	\delta_{i_{s2}}^0\frac{\Theta_{r1}^2\Theta_s^2-\Theta_{r2}^{2}\Theta_{s1}^2}{Y_{s2}}\Theta_r\epsilon(\bi) & \text{if $i_{s2}=0,i_r\neq 0$},\\
	\delta_{i_{r2}}^0\frac{\Theta_{r1}^2\Theta_{s2}^2-\Theta_{r}^{2}\Theta_{s1}^2}{Y_{r2}}\Theta_s\epsilon(\bi) & \text{if $i_{r2}=0,i_r\neq 0$},\\
	\delta_{i_{s2}}^0\frac{\Theta_{r1}^2\Theta_s^2-\Theta_{r2}^{2}\Theta_{s1}^2}{Y_{s2}}\Theta_r\epsilon(\bi) & \text{if $i_{s2}=0,i_r=0,i_s\neq 0$},\\
	\delta_{i_{r2}}^0\frac{\Theta_{r1}^2\Theta_{s2}^2-\Theta_{r}^{2}\Theta_{s1}^2}{Y_{r2}}\Theta_s\epsilon(\bi) & \text{if $i_{r2}=0,i_s=0,i_r\neq 0$},\\
	(\delta_{i_{r1}}^0\frac{\Theta_s^2-\Theta_{s2}^2}{Y_{r1}}\Theta_r\Theta_s\Theta_r+\delta_{i_{s1}}^0\frac{\Theta_{r2}^2-\Theta_{r}^2}{Y_{s1}}\Theta_s\Theta_r\Theta_s)\epsilon(\bi) & \text{if $i_{r1}=0,i_{s1}=0,i_{r}\neq 0$},\\
	(\delta_{i_{r1}}^0\frac{\theta_s^2-\theta_{s2}^2}{y_{r1}}\theta_r\theta_s\theta_r+\delta_{i_r}^0\delta_{i_{r1}}^0\frac{\sigma_r\sigma_s(\theta_s^2)-\theta_s^2}{\sigma_r(Y_r)Y_{r2}}\Theta_r\Theta_s\\
	+\delta_{i_r}^0\delta_{i_{r1}}^0\frac{\Theta_{s2}^2-\sigma_s(\Theta_s^2)}{Y_rY_{r1}}\Theta_s\Theta_r+\delta_{i_r}^0\delta_{i_{r1}}^0\frac{(Y_r-Y_{r1})\sigma_s(\Theta_{s}^2)}{Y_{r}^{2}Y_{r1}^{2}}\Theta_s\\
	+\delta_{i_{r2}}^0\frac{\Theta_{r1}^2\Theta_{s2}^2-\Theta_{r}^{2}\Theta_{s1}^2}{Y_{r2}}\Theta_s)\epsilon(\bi) & \text{if $i_r=0,i_{r1}=0,i_{r2}=0,i_{s}\neq 0$},\\
	0&\text{else}
	\end{cases}	\\
	&=\begin{cases}\frac{1}{Y_s-1}\Psi_r\Psi_s\Psi_r\epsilon(\bi) & \text{if $i_{r}=-3, i_s=1,e\neq 2,3$},\\
	\frac{1}{1-Y_{s2}}\Psi_r\Psi_s\Psi_r\epsilon(\bi) & \text{if $i_r=3, i_s=-1, e\neq 2,3$},\\
	\frac{3-3Y_{s1}+Y_{s1}^2}{1-Y_r}\Psi_s\Psi_r\Psi_s\epsilon(\bi) & \text{if $i_r=-i_s=1,e\neq 2$},\\
	\frac{3-3Y_{s1}+Y_{s1}^2}{Y_{r2}-1}\Psi_s\Psi_r\Psi_se(\bi)\epsilon(\bi) & \text{if $i_r=-i_s=-1,e\neq 2$},\\
	\frac{(2-Y_{s2})(Y_{s}-Y_{s1}+Y_{r1}Y_{s1})}{(1-Y_{r1})(1-Y_s)}\Psi_r\epsilon(\bi) & \text{if $i_s=1,i_{r}=-2,e\neq 2$},\\
	\frac{(Y_{s2}-2)(Y_{s1}-Y_{s}+Y_{r2}Y_{s})}{(1-Y_{r2})(1-Y_{s1})}\Psi_r\epsilon(\bi) & \text{if $i_s=-1,i_{r}=2,e\neq 2$},\\
	\frac{Y_s(Y_s-2)}{1-Y_{s2}}\Psi_s\epsilon(\bi) & \text{if $i_r=1,i_{s}=2, e=4$},\\
	\frac{Y_s(Y_s-2)}{1-Y_{r1}}\Psi_s\epsilon(\bi) & \text{if $i_r=-1,i_{s}= 2, e=4$},\\
	\frac{1}{1-Y_{s1}}\Psi_s\epsilon(\bi) & \text{if $i_r=3,i_{s}=-2, e\neq 2,3$},\\
	\frac{1}{Y_{s2}-1}\Psi_s\epsilon(\bi) & \text{if $i_r=-3,i_{s}=2, e\neq 2,3$},\\
	\frac{1}{1-Y_r}\Psi_s\epsilon(\bi) & \text{if $i_r=1,3i_{s}=-2, e\neq 2,4$},\\
	\frac{1}{Y_{r1}-1}\Psi_s\epsilon(\bi) & \text{if $i_r=-1,3i_{s}=2, e\neq 2,4$},\\
	\frac{(Y_{s2}-2)[Y_{r2}Y_{s1}+Y_{r1}Y_s(1-Y_r)]}{(1-Y_{s1})(1-Y_{r2})}(Y_r\Psi_r+1)\epsilon(\bi) & \text{if $i_r=0,i_s=1, e=2$},\\
	\frac{(2-Y_s)[Y_{r1}Y_{s2}+Y_rY_{s1}(1-Y_s)^2]}{(1-Y_{r1})(1-Y_{s2})}(Y_s\psi_s+1)\epsilon(\bi) & \text{if $i_r=1, i_s=0, e=2$},\\
	[\frac{Y_{s1}}{1-Y_{s2}}\Psi_r\Psi_s\Psi_r-\frac{Y_{r1}(3-3Y_{s1}+Y_{s1}^2)}{1-Y_{r2}}\Psi_s\Psi_r\Psi_s\\
	+\frac{Y_{r2}-Y_{r1}+Y_s-Y_{s1}+Y_sY_{r2}}{(1-Y_{s})(1-Y_{r2})}]\epsilon(\bi) & \text{if $i_r=i_s=1, e=2$},\\
	\frac{1}{Y_s-1}\Psi_r\Psi_s\Psi_r\epsilon(\bi) & \text{if $i_r=0,i_s=1, e=3$},\\
	\frac{1}{1-Y_{s2}}(\Psi_r\Psi_s\Psi_r+\Psi_r\Psi_s)\epsilon(\bi) & \text{if $i_r=0,i_s=-1, e=3$},\\
	0 & \text{else}.
	\end{cases}
	\end{align*}
		
For each $w\in \mcW$, we {\it fix} a reduced decomposition $w=\sigma_{r_1} \sigma_{r_2}\cdots \sigma_{r_m}$ and define the element
$$\Psi_w:=\Psi_{r_1}\Psi_{r_2}\cdots \Psi_{r_m}\in \mathcal{L}_q.$$
Similar to the degenerate case, we can show that $\{\Psi_w \ | \ w\in \mcW\}$ is a basis of $\mathcal{L}_q$ as $\Bbbk(Y)\otimes_\Bbbk\mathcal{E}$-module. Thus the relations (1)-(9) is complete since by them every element in $\mathcal{L}_q$ can be written as $\sum_{w\in \mcW,\bi\in \mcC}\Psi_wf_{w,\bi}(Y)\epsilon(\bi)$ with $f_{w,\bi}(Y)\in \Bbbk(Y)$. 	
\end{proof}

\subsection{The BK-subalgebras}
We define the {\it BK-subalgebra} as the $\Bbbk$-algebra $\tilde{\mathcal{L}}_q$ generated by $$\{Y_1,\cdots,Y_n, \Psi_1,\cdots,\Psi_n, f^{-1}(Y),\epsilon(\bi) \ |\ \bi\in \mcC, f(Y)\in \Bbbk[Y] \ \mbox{with} \ f(0)\neq 0\}$$
subject to the relations (1)-(10) of Theorem \ref{thm:KLR basis}.

Similar to the proof of Corollary \ref{cor:basis deBK algebra}, we have the following result.
\begin{corollary}\label{cor:basis BK algebra}  Denote by $1\epsilon(\bi)=\epsilon(\bi)$. Then the algebra $\tilde{\mathcal{L}}_q$ is generated by $$\{X_1,\cdots,X_n, T_1,\cdots,T_{n}, f^{-1}(X)\epsilon(\bi) \ |\ \bi\in \mcC, f(X)\in \Bbbk[X] \ \mbox{with} \ f(q^\bi)\neq 0\}$$
	subject to relations (\ref{affhecke X})-(\ref{TTTTTT}), (\ref{deepsilon}), (\ref{Xepis}), (\ref{Trepsilon}) and
	\begin{equation}\label{semif(X)f^{-1}(X)}
	\epsilon(\bj)\cdot f^{-1}\epsilon(\bi)=\delta_{\bi}^{\bj}f^{-1}\epsilon(\bi)=f^{-1}\epsilon(\bi)\cdot \epsilon(\bj), \ f\cdot f^{-1}\epsilon(\bi)=\epsilon(\bi)=f^{-1}\epsilon(\bi)\cdot f
	\end{equation}
	for $f\in \Bbbk[X]$ with $f(q^\bi)\neq 0$.
\end{corollary}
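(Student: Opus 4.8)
The plan is to mirror the proof of Corollary~\ref{cor:basis deBK algebra} line for line, replacing the degenerate data $(x_r,t_r,\phi_r,\theta_r,y_r,q_r,\psi_r)$ by their non-degenerate counterparts $(X_r,T_r,\Phi_r,\Theta_r,Y_r,Q_r,\Psi_r)$. First I would record the two images inside $\mathcal{L}_q$. Since the elements $Y_r,\Psi_r,\epsilon(\bi)$ satisfy relations (1)--(10) of Theorem~\ref{thm:KLR basis} in $\mathcal{L}_q$, there is a canonical homomorphism $\alpha_q\colon\tilde{\mathcal{L}}_q\to\mathcal{L}_q$ fixing the names of the generators. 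Using relations (1)--(9) together with the $\Bbbk(Y)\otimes_\Bbbk\mathcal{E}$-basis $\{\Psi_w\mid w\in\mcW\}$ of $\mathcal{L}_q$ produced in the proof of Theorem~\ref{thm:KLR basis}, every element of $\tilde{\mathcal{L}}_q$ can be brought to the normal form $\sum_{w,\bi}\Psi_w f_{w,\bi}(Y)g_{w,\bi}^{-1}(Y)\epsilon(\bi)$ with $g_{w,\bi}(0)\neq0$; this shows $\alpha_q$ is injective with image $\bigoplus_{w\in\mcW}\Psi_w\,\mathcal{P}(Y,\mathcal{E})$, where $\mathcal{P}(Y,\mathcal{E})=\{fg^{-1}\mid f,g\in\Bbbk[Y],\ g(0)\neq0\}\otimes_\Bbbk\mathcal{E}$.

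Dually, I would let $\mathcal{M}_q$ denote the $\Bbbk$-algebra on the second generating set subject to \eqref{affhecke X}--\eqref{TTTTTT}, \eqref{deepsilon}, \eqref{Xepis}, \eqref{Trepsilon} and \eqref{semif(X)f^{-1}(X)}. Exactly as above, the Bernstein--Zelevinski decomposition \eqref{Lusztig Tw} and the same normal-form argument give an injection $\alpha_q'\colon\mathcal{M}_q\to\mathcal{L}_q$ with image $\bigoplus_{w\in\mcW}T_w\,\mathcal{P}(X,\mathcal{E})$, where $\mathcal{P}(X,\mathcal{E})=\{fg^{-1}\epsilon(\bi)\mid \bi\in\mcC,\ f,g\in\Bbbk[X],\ g(q^{\bi})\neq0\}$. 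The whole statement then reduces to the identity $\mathrm{Im}\,\alpha_q=\mathrm{Im}\,\alpha_q'$, for then $(\alpha_q')^{-1}\circ\alpha_q$ is the desired isomorphism $\tilde{\mathcal{L}}_q\cong\mathcal{M}_q$.

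The heart of the matter is this equality of images. First the two coefficient algebras coincide: since $Y_r\epsilon(\bi)=(1-q^{-i_r}X_r)\epsilon(\bi)$, evaluation at $Y=0$ is evaluation at $X=q^{\bi}$, so the isomorphism $\Bbbk(X)\otimes_\Bbbk\mathcal{E}=\Bbbk(Y)\otimes_\Bbbk\mathcal{E}$ recorded after \eqref{YX} restricts to $\mathcal{P}(X,\mathcal{E})=\mathcal{P}(Y,\mathcal{E})$. It then suffices to see that each $\Psi_r$ lies in $\mathrm{Im}\,\alpha_q'$. Expanding \eqref{Psitheta} through $\Theta_r\epsilon(\bi)=\Phi_rQ_r^{-1}(\bi)\epsilon(\bi)$ and $\Phi_r=T_r+(1-q)(1-X_r)^{-1}$, and cancelling the apparent pole of $(1-X_r)^{-1}$ against $Y_r^{-1}$ on the strand $i_r=0$ (where $Y_r=1-X_r$), yields the clean expression
\begin{align*}
\Psi_r=\sum_{\substack{\bi\in\mcC\\ i_r\neq0}}\Bigl(T_r+\tfrac{1-q}{1-X_r}\Bigr)Q_r^{-1}(\bi)\epsilon(\bi)+\sum_{\substack{\bi\in\mcC\\ i_r=0}}(T_r+1)Q_r^{-1}(\bi)\epsilon(\bi),
\end{align*}
which manifestly lies in $\bigoplus_wT_w\,\mathcal{P}(X,\mathcal{E})=\mathrm{Im}\,\alpha_q'$ once $Q_r^{\pm1}(\bi)\in\mathcal{P}(Y,\mathcal{E})$ is known; this same formula, read in reverse, expresses each $T_r$ through the $\Psi_r$ and $\mathcal{P}$, giving the opposite inclusion and hence $\mathrm{Im}\,\alpha_q=\mathrm{Im}\,\alpha_q'$.

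The main obstacle I expect is precisely the auxiliary claim that every $Q_r(\bi)$ and its inverse from \eqref{Qr(i)} belongs to $\mathcal{P}(Y,\mathcal{E})$, i.e. is regular and invertible at $Y=0$. This is a branch-by-branch check on \eqref{Qr(i)}: on the strand $i_r\neq0,\pm1$ one needs $q^{i_r}-q\neq0$ and $q^{i_r}-1\neq0$ at $Y_r=0$, which uses that $q$ is a primitive $e$-th root of unity so that $q^{i_r}=1$ (resp. $q^{i_r-1}=1$) forces $i_r=0$ (resp. $i_r=1$) in $I$; on the remaining strands one needs $1-q\neq0$, $q^2-1\neq0$ and $1-q^{-1}\neq0$, which follow from $q\neq0,1$ together with the case hypotheses $e\neq2$ (i.e.\ $q\neq-1$). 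Granting this arithmetic, all steps are formally identical to the degenerate case, so the verification is routine but must be carried out for each line of \eqref{Qr(i)}.
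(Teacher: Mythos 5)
Your proposal is correct and follows essentially the same route as the paper, which itself only says the proof is ``similar to the proof of Corollary \ref{cor:basis deBK algebra}'': two injections into $\mathcal{L}_q$ with images $\bigoplus_w\Psi_w\mathcal{P}(Y,\mathcal{E})$ and $\bigoplus_wT_w\mathcal{P}(X,\mathcal{E})$, the identification $\mathcal{P}(X,\mathcal{E})=\mathcal{P}(Y,\mathcal{E})$, and the rewriting of $\Psi_r$ in terms of $T_r$ via $Q_r^{-1}(\bi)$ (your formula, including the cancellation of the pole of $(1-q)(1-X_r)^{-1}$ against $Y_r^{-1}$ on the $i_r=0$ strand using $Q_r(\bi)=1-q-Y_r$, checks out). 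The branch-by-branch verification that $Q_r(\bi)^{\pm1}\in\mathcal{P}(Y,\mathcal{E})$ is exactly the arithmetic the paper leaves implicit, and your case analysis is sound.
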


Denote by $$\tilde{\mathcal{L}}_q(\Lambda):=\tilde{\mathcal{L}}_q/\langle Y_1^{\Lambda_{i_1}}\epsilon(\bi) | \bi\in \mcC \rangle.$$
We use the same letters $\Psi_1,\cdots,\Psi_n$ and $Y_1,\cdots,Y_n$ to denote the images of the generators in $\tilde{\mathcal{L}}_q(\Lambda)$. Imitate the proof of Lemma \ref{lem:cdesemiration}, we can deduce the following result.

\begin{lemma} \label{lem:csemiration} $\tilde{\mathcal{L}}_q(\Lambda)=\tilde{\mathcal{L}}_q/\langle \prod_{i\in I}(X_1-q^i)^{\Lambda_i}\rangle.$
\end{lemma}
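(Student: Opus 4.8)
The plan is to prove this statement in exact parallel with the degenerate case, namely Lemma \ref{lem:cdesemiration}. The target is the equality of two-sided ideals
\[
\langle Y_1^{\Lambda_{i_1}}\epsilon(\bi) \mid \bi\in\mcC\rangle = \Big\langle \prod_{i\in I}(X_1-q^i)^{\Lambda_i}\Big\rangle
\]
inside $\tilde{\mathcal{L}}_q$; once this is established, passing to quotients immediately gives the asserted description of $\tilde{\mathcal{L}}_q(\Lambda)$. The essential translation is that the relation $y_r=\sum_{\bi}(x_r-i_r)\epsilon(\bi)$ used in the degenerate proof becomes $Y_r=\sum_{\bi}(1-q^{-i_r}X_r)\epsilon(\bi)$ from (\ref{YX}), so that the roles of $x_1-i$ and $y_1$ are played by $X_1-q^i$ and $Y_1$ respectively.

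First I would expand $\prod_{i\in I}(X_1-q^i)^{\Lambda_i}$ using the idempotent decomposition $1=\sum_{\bj\in\mcC}\epsilon(\bj)$ together with $X_1\epsilon(\bj)=\epsilon(\bj)X_1$. Writing $X_1=\sum_{\bj}q^{j_1}(1-Y_1)\epsilon(\bj)$ (the inverse of (\ref{YX}) on the $r=1$ component), one gets $X_1-q^{j_1}=-q^{j_1}Y_1$ on the block $\epsilon(\bj)$, and hence
\begin{align*}
\prod_{i\in I}(X_1-q^i)^{\Lambda_i}
&=\sum_{\bj\in\mcC}\prod_{i\in I}(X_1-q^i)^{\Lambda_i}\epsilon(\bj)\\
&=\sum_{\bj\in\mcC}\prod_{\substack{i\in I\\ i\neq j_1}}(X_1-q^i)^{\Lambda_i}\,(-q^{j_1}Y_1)^{\Lambda_{j_1}}\epsilon(\bj),
\end{align*}
which plainly lies in $\langle Y_1^{\Lambda_{i_1}}\epsilon(\bi)\mid\bi\in\mcC\rangle$. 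This gives one containment.

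For the reverse containment I would exploit the presence of inverses in the BK-subalgebra: by Corollary \ref{cor:basis BK algebra} the element $\prod_{i\neq j_1}[(X_1-q^i)^{\Lambda_i}]^{-1}\epsilon(\bj)$ belongs to $\tilde{\mathcal{L}}_q$, since for $\bj\in\mcC$ the factor $\prod_{i\neq j_1}(X_1-q^i)^{\Lambda_i}$ is a polynomial in $X_1$ whose evaluation at $q^{\bj}$ is nonzero (each $q^{j_1}-q^i\neq 0$ for $i\neq j_1$, using $e\neq 1$). Multiplying the displayed expansion by this inverse recovers $(-q^{j_1})^{\Lambda_{j_1}}Y_1^{\Lambda_{j_1}}\epsilon(\bj)$, and since $-q^{j_1}$ is a unit this exhibits each generator $Y_1^{\Lambda_{j_1}}\epsilon(\bj)$ as an element of the right-hand ideal. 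Combining the two containments yields the equality of ideals, and therefore $\tilde{\mathcal{L}}_q(\Lambda)=\tilde{\mathcal{L}}_q/\langle\prod_{i\in I}(X_1-q^i)^{\Lambda_i}\rangle$.

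The only point requiring genuine care—and the place I expect a reader to stumble—is the bookkeeping around $X_1-q^{j_1}=-q^{j_1}Y_1$ on the block $\epsilon(\bj)$, together with verifying that the ``extra'' factors $(X_1-q^i)^{\Lambda_i}$ for $i\neq j_1$ are genuinely invertible in $\tilde{\mathcal{L}}_q$ rather than merely in the larger Lusztig extension $\mathcal{L}_q$. This invertibility is exactly what Corollary \ref{cor:basis BK algebra} provides through the condition $f(q^{\bi})\neq 0$, so the proof hinges on quoting that corollary correctly; beyond this the argument is a formal transcription of Lemma \ref{lem:cdesemiration}.
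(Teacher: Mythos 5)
Your proof is correct and is exactly the argument the paper intends: the paper gives no separate proof of this lemma, stating only that one should imitate the proof of Lemma \ref{lem:cdesemiration}, and your writeup is precisely that imitation, with the block identity $X_1-q^{j_1}=-q^{j_1}Y_1$ on $\epsilon(\bj)$ replacing $x_1-j_1=y_1$ and Corollary \ref{cor:basis BK algebra} supplying the needed inverses (the distinctness of the $q^i$ for $i\in I$ follows from $e$ being the quantum characteristic fixed at the start of Section 4, rather than from $e\neq 1$ per se). No gaps.
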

\begin{remark} Similar to the degenerate case, the elements $Y_r, \prod_{i\in I}(X_r-q^i)$ are all nilpotent in $\tilde{\mathcal{L}}_q(\Lambda)$.
\end{remark}

\subsection{The cyclotomic non-degenerate affine Hecke algebras}

We define the {\itshape non-degenerate cyclotomic affine Hecke algebra $\mcH_q(\Lambda)$} as
$$\mcH_q(\Lambda):=\mcH_q/\langle\prod_{i\in I}(X_1-q^i)^{\Lambda_i}\rangle.$$
Similar to \cite[Subsection 4.1]{Brundan-Kleshchev09}, there is a system $\{e(\bi) \ | \ \bi\in \mcC\}$ of mutually orthogonal idempotents in $\mcH_q(\Lambda)$ such that $1=\sum_{\bi\in I^n}e(\bi)$ and \begin{equation*}e(\bi)\mcH_q(\Lambda)=\{h\in \mcH_q(\Lambda) \ | \ (X_r-q^{i_r})^mh=0 \ \mbox{for all} \ r\in [n] \ \mbox{and} \ m\gg0\}.\end{equation*}
It is easy to see that $X_re(\bi)=e(\bi)X_r$ for all $r\in [n]$ and $\bi\in \mcC$, and for a polynomial $f(X)\in \Bbbk[X]$, $f(X)e(\bi)$ is a unit if and only if $f(q^{\bi})\neq 0$. In particular, the element $(1-X_r)e(\bi)$ is a unit in $e(\bi)\mcH_q(\Lambda)$ if and only if $i_r\neq 0$. In this case, we write $(1-X_r)^{-1}e(\bi)$ for the inverse.

\begin{lemma} For $r\in [n]$ and $\bi\in I^{n}$, there holds that
	\begin{equation} \label{Tre-eTr} T_re(\bi)=\begin{cases}
	e(\bi)T_r & \text{if $i_r=0$},\\
	e(\sigma_r(\bi))T_r+\tfrac{1-q}{1-X_r}e(\sigma_r(\bi))-\tfrac{1-q}{1-X_r}e(\bi) & \text{if $i_r\neq 0$}.
	\end{cases}
	\end{equation}
\end{lemma}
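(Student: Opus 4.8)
The plan is to follow the proof of Lemma~\ref{lem:degenerate} essentially line by line, substituting the relation (\ref{deft}) by its non-degenerate analogue (\ref{fT}), the additively nilpotent elements $(x_s-i_s)e(\bi)$ by the multiplicatively nilpotent $(X_s-q^{i_s})e(\bi)$, and the divided difference $\parti_r$ by the Demazure operator $\mathrm{D}_r$. Throughout I use that the $X_t$ commute with one another and with every $e(\bj)$, so any Laurent polynomial in the $X_t$ commutes with $e(\bi)$, and that $(X_r-q^{i_r})e(\bi)$ is nilpotent while $(1-X_r)e(\bi)$ is a unit exactly when $i_r\neq0$. The computational input I isolate first is that $(\sigma_r(X_s)-q^{\,\sigma_r(\bi)_s})e(\bi)$ is nilpotent: since $\sigma_r(X_s)=X_sX_r^{-a_{sr}}$ is a Laurent monomial and $\sigma_r(\bi)_s=i_s-a_{sr}i_r$, the scalar $q^{\,\sigma_r(\bi)_s}=q^{i_s}q^{-a_{sr}i_r}$ is precisely the value of $\sigma_r(X_s)$ at $X_t=q^{i_t}$, so $\sigma_r(X_s)-q^{\,\sigma_r(\bi)_s}$ lies in the nilpotent part on $e(\bi)$.

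\textbf{Case $i_r=0$.} Here $\sigma_r(\bi)=\bi$, and the goal is $T_re(\bi)\in e(\bi)\mcH_q(\Lambda)$. From (\ref{fT}) I first extract the reversed form $gT_r=T_r\sigma_r(g)+(1-q)\mathrm{D}_r(g)$, valid for every Laurent polynomial $g$; taking $g=(X_s-q^{i_s})^m$ gives
\[(X_s-q^{i_s})^mT_re(\bi)=T_r(\sigma_r(X_s)-q^{i_s})^me(\bi)+(1-q)\mathrm{D}_r\big((X_s-q^{i_s})^m\big)e(\bi).\]
The first summand dies for $m\gg0$ by the nilpotency just recorded. For the Demazure summand I use the iterated Leibniz rule $\mathrm{D}_r(u^m)=\sum_{k=0}^{m-1}\sigma_r(u)^k\,\mathrm{D}_r(u)\,u^{m-1-k}$ with $u=X_s-q^{i_s}$: as all factors commute and both $ue(\bi)$ and $\sigma_r(u)e(\bi)$ are nilpotent, each term contains $m-1$ nilpotent factors, so the sum annihilates $e(\bi)$ once $m$ exceeds twice the nilpotency index. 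Hence $(X_s-q^{i_s})^mT_re(\bi)=0$ for $m\gg0$ and all $s$, giving $T_re(\bi)=e(\bi)T_re(\bi)$; the mirror computation, pushing $(X_s-q^{i_s})^m$ to the right of $e(\bi)T_r$, yields $e(\bi)T_r=e(\bi)T_re(\bi)$, and the two together give $T_re(\bi)=e(\bi)T_r$.

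\textbf{Case $i_r\neq0$.} I introduce the denominator-cleared intertwiner $A:=T_r(1-X_r)+(1-q)$, which equals $\Phi_r(1-X_r)$ but lies in $\mcH_q(\Lambda)$ with no inverse involved. Combining (\ref{fT}) with the identity $\mathrm{D}_r(g)(1-X_r)=\sigma_r(g)-g$ gives the clean relation $gA=A\sigma_r(g)$ for all Laurent polynomials $g$, the surviving shadow of (\ref{gPhi}) inside the cyclotomic quotient. Therefore $(X_s-q^{\,\sigma_r(\bi)_s})^mAe(\bi)=A(\sigma_r(X_s)-q^{\,\sigma_r(\bi)_s})^me(\bi)=0$ for $m\gg0$, so $Ae(\bi)=e(\sigma_r(\bi))Ae(\bi)$; since $e(\sigma_r(\bi))e(\bi)=0$ the $(1-q)e(\bi)$ term drops from the right-hand side, and right-multiplying by the unit $(1-X_r)^{-1}e(\bi)$ turns this into
\[T_re(\bi)=e(\sigma_r(\bi))T_re(\bi)-\tfrac{1-q}{1-X_r}e(\bi).\]
The symmetric argument applied to $e(\sigma_r(\bi))A$ (which lands in $\mcH_q(\Lambda)e(\bi)$, using $\sigma_r(\bi)_r=-i_r\neq0$ so that $(1-X_r)^{-1}e(\sigma_r(\bi))$ exists) produces $e(\sigma_r(\bi))T_r=e(\sigma_r(\bi))T_re(\bi)-\tfrac{1-q}{1-X_r}e(\sigma_r(\bi))$. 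Substituting this into the previous display eliminates $e(\sigma_r(\bi))T_re(\bi)$ and delivers exactly the asserted formula.

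The step I expect to be the genuine obstacle is the Demazure summand in the case $i_r=0$. In the degenerate setting $\parti_r$ strictly lowers degree, so $\parti_r\big((x_s-i_s)^m\big)e(\bi)=0$ for $m\gg0$ by a one-line binomial count; here $\mathrm{D}_r$ does not lower degree and $\mathrm{D}_r(X_s)$ reduces on $e(\bi)$ to the scalar $a_{sr}q^{i_s}$, which need not vanish. The vanishing is recovered only through the commutativity-based repackaging of $\mathrm{D}_r(u^m)$ above, and that in turn hinges on the multiplicative nilpotency of $\sigma_r(X_s)-q^{\,\sigma_r(\bi)_s}$ established at the outset.
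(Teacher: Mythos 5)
Your proof follows the paper's argument for this lemma essentially step for step: the same initial observation that $(\sigma_r(X_s)-q^{\sigma_r(\bi)_s})e(\bi)$ is nilpotent, the same use of (\ref{fT}) to place $T_re(\bi)$ in $e(\bi)\mcH_q(\Lambda)$ when $i_r=0$, and the same denominator-cleared element $T_r(1-X_r)+(1-q)$ with the identity $gA=A\sigma_r(g)$ and its mirror in the case $i_r\neq 0$. The only substantive difference is that you make explicit, via the iterated Leibniz expansion of $\mathrm{D}_r(u^m)$, why the Demazure summand kills $e(\bi)$ for $m\gg 0$ (a point the paper covers with ``similarly''), so the proposal is correct and takes the same route.
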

\begin{proof}   For any $s\in [n]$, the element $$(\sigma_r(X_s)-q^{\sigma(\bi)_s})e(\bi)=[X_r^{-a_{sr}}(X_s-q^{i_s})+(X_r-q^{i_r})f_{a_{sr}}(X_r^{\pm 1})]e(\bi)$$ for some polynomial $f_{a_{sr}}(X_r^{\pm 1})$. Thus it is nilpotent by the nilpotency of $(X_s-q^{i_s})e(\bi)$ and $(X_r-q^{i_r})e(\bi)$. Similarly, we can show that
	$\mathrm{D}_r((X_s-q^{i_s})^m)e(\bi)$ is nilpotent an integer $m\gg0$ whenever $i_r=0$. Therefore, if $i_r=0$, by (\ref{fT}), there holds that
	\begin{align*}&(X_s-q^{i_s})^mT_re(\bi)\\
	&=T_r(\sigma_r(X_s)-q^{i_s})^me(\bi)+(1-q)\mathrm{D}_r((X_s-q^{i_s})^m)e(\bi)\\
	&=0\end{align*}
	when $m\gg0$. Therefore $T_re(\bi)\in e(\bi)\mcH_q(\Lambda)$ and then $$T_re(\bi)=e(\bi)T_re(\bi)=e(\bi)T_r.$$
	
	If $i_{r}\neq 0$, by (\ref{fT}), we get
	\begin{align*}&(X_s-q^{\sigma_r(\bi)_s})^m[T_r(1-X_r)+1-q]e(\bi)\\
	&=[T_r(1-X_r)+1-q](\sigma_r(X_s)-q^{\sigma_r(\bi)_s})^me(\bi)\\
	&=0\end{align*}
	when $m\gg0$. Therefore
	\begin{align*}T_r(1-X_r)e(\bi)+(1-q)e(\bi)&=e(\sigma_r(\bi))[T_r(1-X_r)e(\bi)+(1-q)e(\bi)]\\
	&=e(\sigma(\bi))T_r(1-X_{r})e(\bi).
	\end{align*}
	Then right-multiplying by $(1-X_r)^{-1}e(\bi)$, we have $$T_re(\bi)=e(\sigma_r(\bi))T_re(\bi)-(1-q)
	(1-X_r)^{-1}e(\bi).$$
	Similarly, we can deduce that
	$$e(\sigma_r(\bi))T_r=e(\sigma_r(\bi))T_re(\bi)-(1-q)
	(1-X_r)^{-1}e(\sigma_r(\bi)).$$
	Therefore $T_re(\bi)=e(\sigma_r(\bi))T_r+\tfrac{1-q}{1-X_r}e(\sigma_r(\bi))-\tfrac{1-q}{1-X_r}e(\bi).$
\end{proof}

Let $$e(\mcC):=\sum_{\bi\in \mcC}e(\bi)\in \mcH_q(\Lambda).$$
Then $e(\mcC)$ is a central idempotent in $\mcH_q(\Lambda)$ by (\ref{Tre-eTr}). Furthermore, by Lemma \ref{lem:csemiration}, (\ref{Tre-eTr}) and Corollary \ref{cor:basis BK algebra}, there is a homomorphism
$$\rho_q \colon \tilde{\mathcal{L}}_q(\Lambda) \to \mcH_q(\Lambda)e(\mcC)$$ sending the generators $X_r, T_r$ to the same named elements, and $f(X)^{-1}\epsilon(\bi)$ with $f(q^\bi)\neq 0$ to $f(X)^{-1}e(\bi)$. Similar to the proof of Theorem \ref{thm:derho}, we arrive at our second main result in this paper for non-degenerate affine Hecke algebras.
\begin{theorem} \label{thm:rho}There is an algebra isomorphism $\tilde{\mathcal{L}}_q(\Lambda)\cong \mcH_q(\Lambda)e(\mcC).$
\end{theorem}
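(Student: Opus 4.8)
The plan is to mirror the proof of Theorem~\ref{thm:derho} by constructing an explicit left inverse of the surjection $\rho_q\colon \tilde{\mathcal{L}}_q(\Lambda)\to \mcH_q(\Lambda)e(\mcC)$ built just before the statement. First I would note that $\rho_q$ is surjective, since its image contains $X_re(\mcC)$, $T_re(\mcC)$ and the local inverses $f^{-1}(X)e(\bi)$, which together generate $\mcH_q(\Lambda)e(\mcC)$. Hence it suffices to produce an algebra homomorphism $\tau_q$ with $\tau_q\rho_q=\mathrm{id}$, and then injectivity of $\rho_q$ is automatic.

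To build $\tau_q$, I would invoke Corollary~\ref{cor:basis BK algebra}: the elements $X_1,\dots,X_n,T_1,\dots,T_n$ of $\tilde{\mathcal{L}}_q$ satisfy exactly the defining relations (\ref{affhecke X})--(\ref{TTTTTT}) of $\mcH_q$, so sending the generators of $\mcH_q$ to the same-named elements of $\tilde{\mathcal{L}}_q$ is an algebra homomorphism. By Lemma~\ref{lem:csemiration} the cyclotomic element $\prod_{i\in I}(X_1-q^i)^{\Lambda_i}$ maps into the ideal defining $\tilde{\mathcal{L}}_q(\Lambda)$, so this descends to $\tau_q\colon \mcH_q(\Lambda)\to\tilde{\mathcal{L}}_q(\Lambda)$.

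The crux is to evaluate $\tau_q$ on the orthogonal idempotents $e(\bj)$, with the target values $\tau_q(e(\bj))=\epsilon(\bj)$ for $\bj\in\mcC$ and $\tau_q(e(\bj))=0$ for $\bj\in I^n\setminus\mcC$. Fixing $\bi\in\mcC$ and $\bj\neq\bi$, I would choose $r$ with $j_r\neq i_r$; by the assumption on $e$ made at the start of this section, the map $i\mapsto q^i$ is injective on $I=\mathbb{Z}/e\mathbb{Z}$, so $q^{i_r}\neq q^{j_r}$, and thus $(X_r-q^{j_r})\epsilon(\bi)$ is a unit in $\tilde{\mathcal{L}}_q(\Lambda)$ by Corollary~\ref{cor:basis BK algebra} (the condition $f(q^{\bi})\neq 0$ holds for $f=X_r-q^{j_r}$). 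Since $(X_r-q^{j_r})^m e(\bj)=0$ for $m\gg0$, applying $\tau_q$ and multiplying by $\epsilon(\bi)$ gives $(X_r-q^{j_r})^m\epsilon(\bi)\tau_q(e(\bj))=0$; inverting the unit forces $\epsilon(\bi)\tau_q(e(\bj))=0$. Summing over $\bi\in\mcC$ and using $\sum_{\bi\in\mcC}\epsilon(\bi)=1$ yields $\tau_q(e(\bj))=0$ for $\bj\notin\mcC$, while for $\bj\in\mcC$ the identity $\sum_{\bj\in I^n}\tau_q(e(\bj))=\tau_q(1)=1$ then forces $\tau_q(e(\bj))=\epsilon(\bj)$.

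Finally, this computation shows $\tau_q(e(\mcC))=\sum_{\bj\in\mcC}\epsilon(\bj)=1$, so the restriction $\tau_q|_{\mcH_q(\Lambda)e(\mcC)}\colon \mcH_q(\Lambda)e(\mcC)\to\tilde{\mathcal{L}}_q(\Lambda)$ is a unital algebra homomorphism; checking $\tau_q\rho_q$ on each generator $X_r,T_r,f^{-1}(X)\epsilon(\bi)$ (using relation (\ref{Tre-eTr}) to track the image of $T_r$) shows it is the identity, whence $\rho_q$ is an isomorphism. The only nonroutine point, and the one I expect to be the main obstacle, is the unit claim: verifying that the local invertibility hypothesis ``$f(q^{\bi})\neq0$'' of Corollary~\ref{cor:basis BK algebra} applies to $f=X_r-q^{j_r}$, which reduces precisely to $q^{i_r}\neq q^{j_r}$, i.e. to injectivity of $i\mapsto q^i$ on $I$. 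Everything else transcribes the degenerate argument verbatim, replacing $x_r-i_r$ by $X_r-q^{i_r}$ throughout.
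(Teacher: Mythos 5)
Your proposal is correct and follows exactly the route the paper intends: the paper proves Theorem \ref{thm:rho} by declaring it ``similar to the proof of Theorem \ref{thm:derho}'', and your argument is precisely that transcription, with the one genuinely new point (that $j_r\neq i_r$ implies $q^{j_r}\neq q^{i_r}$, so $(X_r-q^{j_r})\epsilon(\bi)$ is invertible) correctly identified and justified by the convention on $e$ fixed at the start of Section 4.
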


\section{Generalization}
In this section, we give the general and unified definition for the KLR type algebras in the previous Sections.
\subsection{The uniform quivers}
Let $I$ be an abelian group. We consider the loop-free quiver $\Gamma_I$ with vertex set $I$. Denote by $d_{ij}$ the number of arrows $i\to j$. Then $\Gamma_I$ is said to be a {\itshape uniform quiver} if $d_{ij}=d_{i'j'}$ whenever $i-j=i'-j'$. For a fixed $I$, a uniform quiver $\Gamma_I$ corresponds to a map from $I\setminus \{0\}$ to $\mathbb{N}$.

For the cyclic group $I=\mathbb{Z}/e\mathbb{Z}=\{0,\cdots, e-1\}$, where  $e=0\ \mbox{or} \ 2\leq e\in\mathbb{Z}$, the quivers of type $A_\infty$ if $e=0$ or $A_{e-1}^{(1)}$ if $e\geq 2$ are uniform quivers
\[
\begin{tikzpicture}
\draw[->] (-4.6,0)--(-4,0);\draw[->](-3.2,0)--(-2.6,0);\draw[->](-1.8,0)--(-1.2,0);\draw[->](-.8,0)--(-.2,0);\draw[->](.2,0)--(.8,0);
\draw[->](1.2,0)--(1.8,0);
\node at (-7,0) {$A_\infty:$};\node at (-5,0) {$\cdots$};\node at (-3.6,0) {$-2$};\node at (-2.2,0){$-1$};\node at (-1,0) {$0$};
\node at (0,0) {$1$};\node at (1,0){$2$};\node at (2.2,0){$\cdots$};

\draw[->](-5,-1.4)--(-4.5,-1.4);\draw[->](-4.5,-1.6)--(-5,-1.6);\draw[->](-3.3,-1.8)--(-2.9,-1.2);\draw[->](-2.5,-1.2)--(-2.1,-1.8);\draw[->](-2.3,-2)--
(-3.1,-2);\draw[->](-1,-1.8)--(-1,-1.2);\draw[->](-.8,-1)--(-.2,-1);\draw[->](0,-1.3)--(0,-1.8);\draw[->](-.2,-2)--(-.8,-2);\draw[->](1.2,-2.2)--(1,-1.8);
\draw[->](1.1,-1.4)--(1.5,-1.1);\draw[->](1.9,-1.1)--(2.3,-1.4);\draw[->](2.4,-1.8)--(2.2,-2.2);\draw[->](1.9,-2.4)--(1.5,-2.4);
\node at (-7,-1.5){$A_{e-1}^{(1)}:$};\node at(-5.2,-1.5){$0$};\node at(-4.3,-1.5){$1$};\node at(-3.5,-2){$2$};\node at(-2.7,-1){$0$};\node at(-1.9,-2){$1$};\node at(-1,-2){$3$};\node at(-1,-1){$0$};\node at (0,-1){$1$};\node at(0,-2){$2$};\node at(1.3,-2.4){$3$};\node at(.9,-1.6){$4$};
\node at(1.7,-1){$0$};\node at(2.5,-1.6){$1$};\node at(2.1,-2.4){$2$};\node at (3.5,-1.6){$\cdots$};
\end{tikzpicture}
\]
and so are the followings:
\[
\begin{tikzpicture}
\draw[->] (-.6, .8)--(.8,-0.6);
	\node at (1,-0.8){$4$};
	\draw[->] (.7, -.7)--(-.7,0.7);
	\node at (-.8,0.8){$1$};

	\draw[->] (.6, .8)--(-.8,-0.6);
	\node at (-1,-0.8){$3$};
	\draw[->] (-.7, -.7)--(.7,0.7);
	\node at (.8,0.8){$0$};
	\draw[->] (-1,0.1)--(1,0.1);
	\node at (-1.2,0){$2$};
	\draw[->] (1,-0.1)--(-1,-0.1);
	\node at (1.2,0){$5$};
	
	\draw[->](-3.4,0.1)--(-2.8,0.1);
	\draw[->](-3.4,0.3)--(-2.8,0.3);
	\draw[->](-2.8,-0.1)--(-3.4,-.1);\draw[->](-2.8,-0.3)--(-3.4,-0.3);	
	\node at (-3.6,0){$0$};\node at (-2.6,0){$1$};

	\node at (-8,-2){$\mbZ_2\oplus \mbZ_2\colon$};
	\draw[->](-6,-1.5)--(-5, -1.5);
	\draw[->](-5,-1.7)--(-6,-1.7);
	\node at (-6.5,-1.6){$(0,0)$};\node at (-4.5,-1.6){$(1,0)$};
	\draw[->](-6,-2.5)--(-5, -2.5);
	\draw[->](-5,-2.7)--(-6,-2.7);
	\node at (-6.5,-2.6){$(0,1)$}; \node at (-4.5,-2.6){$(1,1);$};
	\draw[->](-2.4,-1.8)--(-2.4, -2.4);
	\draw[->](-2.6,-2.4)--(-2.6,-1.8);
	\node at (-2.5,-1.6){$(0,0)$};\node at (-.5,-1.6){$(1,0)$};
	\draw[->](-.4,-1.8)--(-.4, -2.4);
	\draw[->](-.6,-2.4)--(-.6,-1.8);
	\node at (-2.5,-2.6){$(0,1)$}; \node at (-.5,-2.6){$(1,1);$};
 \draw[->](1.8,-1.9)--(3.1, -2.4);
	\draw[->](2.9,-2.5)--(1.6,-2);
	\node at (1.5,-1.6){$(0,0)$};\node at (3.5,-1.6){$(1,0)$};
	\draw[->](2,-2.4)--(3.1, -1.8);
	\draw[->](3.1,-2)--(2,-2.6);
	\node at (1.5,-2.6){$(0,1)$}; \node at (3.5,-2.6){$(1,1).$};



	\end{tikzpicture}
	\]
\subsection{The root system and the group map} Denote by $[n]=\{1,2,\cdots,n\}$. Let $\Phi$ be a root system with a simple root system $\Delta=\{\alpha_r \ | \ r\in[n]\}$. Assume that $\mcW$ is the Weyl group of $\Phi$.  Assume $\mcW$ is generated by $\{\sigma_r | r\in [n]\}$ where $\sigma_r$ is the corresponding simple reflection $\sigma_r$ of $\alpha_r$. Let $R$ be a commutative associative unital $\Bbbk$-algebra such that $\mcW$ has an action on it. We fix a {\itshape $\mcW$-map}
$$y\colon \Phi \to R,\quad \alpha\mapsto y_{\alpha}$$ that is, it is a map satisfies $y_{w(\alpha)}=w(y_\alpha)$ for each $w\in \mcW$. We assume $y_{\alpha}$ is not a zero-divisor of $R$ for each $\alpha\in \Phi$. Therefore $R$ is a subalgebra of $R_{\mathrm{Im}y}$ which is the localization of $R$ with respect to $\mathrm{Im}y$. The $\mcW$-action on $R$ extends to an action of $\mcW$ on $R_{\mathrm{Im}y}$ via ring homomorphism. We define the divided difference operators $\parti_r$ on $R_{\mathrm{Im}y}$ to be
\begin{equation}\parti_r(a)=\frac{\sigma_r(a)-a}{y_{\alpha_r}}.
\end{equation}

\subsection{The index set} Let $S$ be a nonempty set. Denote by $S^{*}=\{f\colon S\to I \ | \ f \ \mbox{is a map} \ \}$. Suppose there is an action of $\mcW$ on $S^*$. Then it extends to an actions on $S^{**}=\{h\colon S^*\to I \ | \ h \ \mbox{is a map} \ \}$ by defining
$$w(h)(f)=h(w^{-1}(f))$$
for $w\in \mcW, h\in S^{**}$ and all $f\in S^*$.

Form now on, we fix a $\mcW$-orbit $\mcC$ of $S^*$ and an element $\eta_r$ in $S^{**}$ for each $r\in [n]$. For simplicity, we denote by $\eta_r(\bi):=i_r, \eta_{r1}(\bi)=\sigma_s(\eta_r)(\bi):=i_{r1}, \eta_{r2}(\bi)=\sigma_r\sigma_s(\eta_r)(\bi):=i_{r2}$ for each $\bi\in \mcC$.

\subsection{Coproducts of associated algebras} Let $\Bbbk$ be a commutative ring. Recall in \cite[Subsection 1.4]{BMM}, given two $\Bbbk$-algebras $A_1$ and $ A_2$ with $1$, the {\itshape coproduct} $A_1\sqcup_\Bbbk A_2$ of $A_1$ and $A_2$ is defined to be the quotient of the tensor algebra
$$T(A_1\oplus A_2)=A_1\oplus A_2\oplus A_1\otimes A_1 \oplus A_1\otimes A_2\oplus A_2\otimes A_2\oplus\cdots$$
modulo the ideal generated by all elements of the form $$a_1\otimes b_1-a_1b_1, a_2\otimes b_2-a_2b_2, 1_{A_1}-1_{A_2}$$
where $a_1,b_1\in A_1, a_2,b_2\in A_2$.

\subsection{A KLR-type algebra}
Let $\Gamma_I$ be an uniform quiver with vertex set $I$. For each $m\in I$, we fix a $\mcW$-map $L_{m}\colon \mathrm{Im}y \to R_{\mathrm{Im}y}$ which is defined by
\begin{equation} \label{Lm} L_{m}(y_\alpha)=\begin{cases}y_\alpha^{-1}+\sigma_\alpha(y_\alpha)^{-1} & \text{if $m=0$};\\
y_\alpha^{d_{ij}}\sigma_\alpha(y_\alpha)^{d_{ji}} & \text{if $\exists i\neq j\in I$ such that $m=i-j$}
\end{cases}\end{equation}
for $\alpha\in \Phi$. Denote by $L=(L_m)_{m\in I}$. For simplicity, we denote by $y_r=y_{\alpha_r}, y_{r1}=y_{\sigma_{\alpha_s}(\alpha_r)}, y_{r2}=y_{\sigma_{\alpha_r}\sigma_{\alpha_s}(\alpha_r)}$ for $r\in [n]$. Notice that $L_m$ is decided only by the images of the simple roots $\alpha_r$ by the definition of $\mcW$-map.

\begin{definition} \label{relativeKLR}  Let $A$ be a unital $\Bbbk$-algebra generated by $\{\epsilon(\bi), \psi_r | \bi \in \mcC, r\in [n]\}$. The algebra $\mcR(\Gamma_I,\mcC,R,y)$ is defined to be the coproduct $A \sqcup_{\Bbbk}R$
	together with the following relations for all admissible indices:
	
	(1)  $\sum_{\bi\in \mcC}\epsilon(\bi)=1, \ \epsilon(\bi)\epsilon(\bj)=\delta_{\bi\bj}\epsilon(\bi);$
	
	(2) $a\epsilon(\bi)=\epsilon(\bi)a, \forall a\in R;$
	
	(3) $\psi_r\epsilon(\bi)=\epsilon(\sigma_r(\bi))\psi_r;$	
	
	(4) $\psi_ra\epsilon(\bi)=\sigma_r(a)\psi_r\epsilon(\bi)+\delta_{i_r}^0
	\parti_r(a)\epsilon(\bi), \forall a\in R$;

	(5)  $L_{0}(y_r)\in R$ and
	\begin{align*}
	\label{depsi2}&\psi_r^2\epsilon(\bi)=\begin{cases} -\psi_rL_0(y_r)\epsilon(\bi)& \text{if $i_r=0$},\\
	L_{i_r}(y_r)\epsilon(\bi)& \text{if $i_r\neq 0$};
	\end{cases}
	\end{align*}
	
	(6) if $r\nslash s$, then $\psi_r\psi_s=\psi_s\psi_r$;
	
	(7) if $r-\-s$, then
	\begin{equation*} (\psi_r\psi_s\psi_r-\psi_s\psi_r\psi_s)\epsilon(\bi)=\delta_{i_{s1}}^0\tfrac{L_{i_s}(y_s)-L_{i_r}(y_r)}{y_{s1}}\epsilon(\bi);
	\end{equation*}
	
	(8) if $\xymatrix@C10pt{r\ar@{=}[r]|{\rangle}\ar@{=}[r]&s}$, then
	\begin{align*}&[(\psi_r\psi_s)^2-(\psi_s\psi_r)^2]\epsilon(\bi)\\ &=[\delta_{i_{r1}}^0\tfrac{L_{i_s}(y_s)-L_{i_{s1}}(y_{s1})}{y_{r1}}\psi_r-\delta_{i_{s1}}^0\tfrac{L_{i_r}(y_r)-L_{i_{r1}}(y_{r1})}{y_{s1}}\psi_s\\
&+\delta_{i_r}^0\delta_{i_{r1}}^0\tfrac{L_{i_s}(y_s)-L_{i_{s1}}(y_{s1})}{y_ry_{r1}}]\epsilon(\bi);
	\end{align*}
	
	(9) if $\xymatrix@C10pt{r\ar@3{-}[r]|{\rangle}\ar@3{-}[r]&s}$, then
	\begin{align*}&[(\psi_r\psi_s)^3-(\psi_s\psi_r)^3]\epsilon(\bi)\\
	&=[\delta_{i_{r1}}^0\tfrac{L_{i_s}(y_s)-L_{i_{s2}}(y_{s2})}
	{y_{r1}}\psi_r\psi_s\psi_r-\delta_{i_{s1}}^0\tfrac{L_{i_r}(y_r)-L_{i_{r2}}(y_{r2})}{y_{s1}}\psi_s\psi_r\psi_s \\
	&+\delta_{i_r}^0\delta_{i_{r1}}^0\tfrac{y_rL_{i_{s2}}(y_{s2})-(y_r-y_{r2})L_{i_{s1}}(y_{s1})-y_{r2}L_{i_s}(y_s) }{y_ry_{r1}y_{r2}}\psi_r\psi_s\\
	&+\delta_{i_{s2}}^0\tfrac{L_{i_{r1}}(y_{r1})L_{i_s}(y_s)-L_{i_{r2}}(y_{r2})L_{i_{s1}}(y_{s1})}{y_{s2}}\psi_r-\delta_{i_{r2}}^0\tfrac{L_{i_r}(y_r)L_{i_{s1}}(y_{s1})-L_{i_{r1}}(y_{r1})L_{i_{s2}}(y_{s2})}{y_{r2}}\psi_s\\
	&+\delta_{i_r}^0\delta_{i_{s2}}^0\tfrac{L_{i_{r1}}(y_{r1})L_{i_s}(y_s)-L_{i_{r2}}(y_{r2})L_{i_{s1}}(y_{s1})}{y_ry_{s2}}-\delta_{i_s}^0\delta_{i_{r2}}^0\tfrac{L_{i_r}(y_r)L_{i_{s1}}(y_{s1})-L_{i_{r1}}(y_{r1})L_{i_{s2}}(y_{s2})}{y_sy_{r2}}\\
	&+\delta_{i_{r1}}^0\delta_{i_{s1}}^0\tfrac{L_{i_{r2}}(y_{r2})L_{i_{s}}(y_{s})-L_{i_r}(y_r)L_{i_{s2}}(y_{s2})}{y_{r1}y_{s1}}]\epsilon(\bi).
	\end{align*}
	(10) all the coefficients above are in $R$.
	
\end{definition}
\begin{remark} 	(a) If $R=R_{\mathrm{Im}y}$, then the condition (10) holds automatically. For example, the Lusztig extension in KLR form.

	In the sequel, we take $I=\mathbb{Z}/e\mathbb{Z}$ and $\Gamma_I=A_\infty$ or $A_{e-1}^{(1)}$.

	(b) Take $R=\Bbbk[y_1,\cdots,y_n]$ to be the polynomial ring with indeterminates $\{y_r \ | \ r\in [n]\}$ and $\Bbbk(y_1,\cdots, y_n)$ the corresponding rational functions field. Then there is an action of $\mcW$ on $R$ (by ring automorphism) such that for every $r\in [n]$
	\begin{equation*}
	\sigma_r(y_s)=y_s-a_{sr}y_r.
	\end{equation*}
	Let $y\colon \Phi \to R$ be the map by sending $\sum_{r\in [n]}a_r\alpha_r$ to $\sum_{r\in [n]}a_ry_r$. Then $y$ is a $\mcW$-map and the map $L_m\colon {\rm Im}y\to \Bbbk(y_1,\cdots,y_n)$ in (\ref{Lm}) is given by
	$$L_m(y_r)=\begin{cases}0 & \text{if $m=0$};\\
	-y_r & \text{if $m=1, e\neq 2$};\\
	y_r & \text{if $m=-1, e\neq 2$};\\
	-y_r^2 & \text{if $m=1, e=2$};\\
	1 & \text{if $m\neq 0,\pm 1$}.
	\end{cases}$$
	Take $S=\{1,2,\cdots,n\}$, then $S^*=I^n$ and there is a $\mcW$-action on $I^n$ as defined by (\ref{group action}). Let $\eta_r$ be the map $\eta_r(\bi)=i_r$ for all $\bi\in I^n$ and $r\in [n]$. Then the algebra $\mcR(\Gamma_I,\mcC,R,y)$ is isomorphic to the algebra $\mcR$ as defined in Subsection 3.8.

	(c) Let $R=\{\tfrac{f}{g}\ | \ f,g\in \Bbbk[y_1,\cdots,y_n] \ \mbox{with} \ g(0)\neq 0\}\subset \Bbbk(y_1,\cdots,y_n)$. Then there is an action of $\mcW$ on $R$  (via the ring automorphism) induced by
	$$\sigma_r(y_s)=1-(1-y_s)(1-y_r)^{-a_{sr}}.$$
	Let $y\colon \Phi\to R$ be the map decided by sending $\sum_{r\in [n]}a_r\alpha_r$ to $ \prod_{r\in [n]}(1-y_r)^{a_r}$. then $y$ is a $\mcW$-map and the map $L_m\colon {\rm Im}y\to K(y_1,\cdots,y_n)$ in (\ref{Lm}) is given by
	$$L_m(y_r)=\begin{cases}0 & \text{if $m=0$};\\
	\frac{y_r}{y_r-1} & \text{if $m=1, e\neq 2$};\\
	y_r & \text{if $m=-1, e\neq 2$};\\
	\frac{y_r^2}{y_r-1} & \text{if $m=1, e=2$};\\
	1 & \text{if $m\neq 0,\pm 1$}.
	\end{cases}$$
	Take $S=\{1,2,\cdots,n\}$, then $S^*=I^n$ and there is a $\mcW$-action on $I^n$ as defined by (\ref{group action}).	Let $\eta_r$ be the map $\eta_r(\bi)=i_r$ for all $\bi\in I^n$ and $r\in[n]$. In this case, the algebra $\mcR(\Gamma_I,\mcC,R,y)$ is isomorphic to the BK subalgebra $\tilde{\mathcal{L}}_q$ as defined in Subsection 4.7.

	(d) Take $\mcW=S_n$ to be the symmetric group. Let $R=\Bbbk[y_1,\cdots,y_{n+1}]$ be the polynomial ring, then there is an action of $S_n$ on $R$ by permuting variables. Let $y\colon \Phi \to R$ be the map by sending $\sum_{r\in [n]}a_r\alpha_r$ to $\sum_{r\in [n]}a_r(y_r-y_{r+1})$. Then $y$ is a $\mcW$-map and the map $L_m\colon {\rm Im}y\to \Bbbk(y_1,\cdots,y_{n+1})$ in (\ref{Lm}) is given by
	$$L_m(y_r)=\begin{cases}0 & \text{if $m=0$};\\
	y_{r+1}-y_{r} & \text{if $m=1, e\neq 2$};\\
	y_{r}-y_{r+1} & \text{if $m=-1, e\neq 2$};\\
	(y_{r+1}-y_{r})(y_{r}-y_{r+1}) & \text{if $m=1, e=2$};\\
	1 & \text{if $m\neq 0,\pm 1$}.
	\end{cases}$$
	Take $S=\{1,2,\cdots,n+1\}$, then $S^*=I^{n+1}$ and there is an $S_n$-action on $S^*$ by the place permutation $:w(\bi)_s=i_{w^{-1}(s)}$. Let $\eta_r$ be the map $\eta_r(\bi)=i_r-i_{r+1}$ for all $\bi\in I^{n+1}$ and $r\in [n]$. Then the algebra $\mcR(\Gamma_I,\mcC,R,y)$ is isomorphic to KLR algebra of type A as defined in \cite[Subsection 4.4]{KLi}.

   (e) For symmetry, in Definition 5.3, the term $-\delta_{i_s}^0\delta_{i_{s1}}^0\tfrac{L_{i_r}(y_r)-L_{i_{r1}}(y_{r1})}{y_sy_{s1}}$  should be added to the sum in (8), and  $-\delta_{i_s}^0\delta_{i_{s1}}^0\tfrac{y_sL_{i_{r2}}(y_{r2})-(y_s-y_{s2})L_{i_{r1}}(y_{r1})-y_{s2}L_{i_r}(y_r) }{y_sy_{s1}y_{s2}}\psi_s\psi_r$  should be added  to the sum  in (9). However, in all the examples above they are always zero.

\end{remark}

\vskip10pt

\end{document}